\definecolor{darkblue}{rgb}{0,0,0.6}
\newcounter{commentcounter}
\let\oldtocsection=\tocsection
\let\oldtocsubsection=\tocsubsection
\let\oldtocsubsubsection=\tocsubsubsection
\renewcommand{\tocsection}[2]{\hspace{0em}\oldtocsection{#1}{#2}}
\renewcommand{\tocsubsection}[2]{\hspace{1em}\oldtocsubsection{#1}{#2}}
\renewcommand{\tocsubsubsection}[2]{\hspace{2em}\oldtocsubsubsection{#1}{#2}}
\newtheorem{theorem}{Theorem}[section]
\newtheorem{lem}[theorem]{Lemma}
\newtheorem{cor}[theorem]{Corollary}
\newtheorem{prop}[theorem]{Proposition}
\theoremstyle{definition}
\newtheorem{definition}[theorem]{Definition}
\newtheorem{ex}[theorem]{Example}
\newtheorem{rem}[theorem]{Remark}
\def\hB{\hspace*{\fill}$\qed$}% \newline\noindent}
\DeclareMathOperator{\colim}{colim}
\newcommand{\st}{\mathrm{st}}
\newcommand{\Mon}{\mathbf{Mon}}
\newcommand{\Rex}{\mathrm{Rex}}
\newcommand{\CH}{\mathbf{CH}}
\newcommand{\alg}{\mathrm{alg}}
\newcommand{\Simp}{\mathbf{Simp}}
\newcommand{\sepa}{\mathrm{sep}}
\newcommand{\fin}{\mathrm{fin}}
\newcommand{\group}{\mathrm{group}}
\newcommand{\nCalg}{C^{*}\mathbf{Alg}%^{\mathrm{nu}
}
\newcommand{\F}{\mathbb{F}}
\newcommand{\ho}{\mathrm{ho}}
\newcommand{\Nat}{\mathrm{Nat}}
\newcommand{\CMon}{\mathbf{CMon}}
\newcommand{\bB}{{\mathbf{B}}}
\newcommand{\Fib}{{\mathrm{Fib}}}
\newcommand{\incl}{\mathrm{incl}}
\newcommand{\GL}{\mathrm{GL}}
\newcommand{\bM}{\mathbf{M}}
\newcommand{\bF}{{\mathbf{F}}}
\newcommand{\CAlg}{{\mathbf{CAlg}}}
\newcommand{\Ext}{{\mathrm{Ext}}}
\newcommand{\bA}{{\mathbf{A}}}
\newcommand{\const}{{\mathtt{const}}}
\newcommand{\Alg}{{\mathbf{Alg}}}
\newcommand{\cO}{{\mathcal{O}}}
 \newcommand{\Cat}{{\mathbf{Cat}}}
\newcommand{\Group}{\mathbf{Group}}
\newcommand{\Spc}{\mathbf{Spc}}
\newcommand{\IN}{\mathbb{N}}
\newcommand{\Calg}{{\mathbf{C}^{\ast}\mathbf{Alg}}}
\newcommand{\id}{\mathrm{id}}
\newcommand{\markus}[1]{\textcolor{blue}{#1}}
\newcommand{\ulrich}[1]{\textcolor{teal}{#1}}
\newcommand{\inv}{\mathrm{inv}}
\newcommand{\bC}{\mathbf{C}}
\newcommand{\beins}{\mathbf{1}}
\newcommand{\kk}{\mathrm{kk}}
\newcommand{\K}{\mathrm{K}}
\newcommand{\KK}{\mathbf{KK}}
\newcommand{\KO}{\mathrm{KO}}
\newcommand{\KU}{\mathrm{KU}}
\renewcommand{\Group}{\mathbf{Group}}
\newcommand{\E}{\mathbb{E}}
\renewcommand{\beins}{\mathbbm{1}}
\renewcommand{\O}{\mathscr{O}}
\newcommand{\UCT}{\mathrm{UCT}}
\newcommand{\Ab}{\mathbf{Ab}}
\newcommand{\Z}{\mathbb{Z}}
\newcommand{\nat}{\IN}
\newcommand{\Hom}{\mathrm{Hom}}
\newcommand{\cT}{\mathcal{T}}
\newcommand{\Mod}{\mathbf{Mod}}
\newcommand{\cS}{\mathcal{S}}
\newcommand{\R}{\mathbb{R}}
\renewcommand{\P}{\mathbb{P}}
\newcommand{\Sp}{\mathbf{Sp}}
\newcommand{\Top}{\mathbf{Top}}
\newcommand{\cC}{\mathcal{C}}
\newcommand{\op}{\mathrm{op}}
\newcommand{\map}{\mathrm{map}}
\newcommand{\Map}{\mathrm{Map}}
\newcommand{\Tor}{\mathrm{Tor}}
\newcommand{\End}{\mathrm{End}}
\newcommand{\bT}{\mathscr{T}}
\newcommand{\Fun}{\mathbf{Fun}}
\newcommand{\Ind}{\mathrm{Ind}}
\newcommand{\cA}{\mathcal{A}}
\newcommand{\Aut}{\mathrm{Aut}}
\newcommand{\lto}{\longrightarrow}
\newcommand{\C}{\mathbb{C}}
\newcommand{\bD}{\mathbf{D}}
\newcommand{\bS}{\mathbb{S}}
\title[A survey on $K$-theory via homotopical algebra]{A survey on operator $K$-theory \\ via homotopical algebra}
\date{\today}
\thanks{Ulrich Bunke was supported by the SFB 1085 (Higher Invariants) funded by the Deutsche Forschungsgemeinschaft (DFG)}
\author[U.~Bunke]{Ulrich Bunke}
\address{Universit\"at Regensburg, Mathematisches Institut, Universit\"atsstr. 31, 93053 Regensburg, Germany}
\email{ulrich.bunke@mathematik.uni-regensburg.de}
\author[M.~Land]{Markus Land}
\address{Mathematisches Institut, Ludwig-Maximilians-Universit\"at M\"unchen, Theresienstra\ss e 39, 80333 M\"unchen, Germany}
\email{markus.land@math.lmu.de}
\author[U.~Pennig]{Ulrich Pennig}
\address{Cardiff University, School of Mathematics, Senghennydd Road, Cardiff, CF24 4AG, Wales, UK} 
\email{PennigU@cardiff.ac.uk}
\begin{document}
\bibliographystyle{alpha}

\begin{abstract}
This is a survey article with the goal to advertise spectrum valued versions of $K$- and $KK$- theory for $C^{*}$-algebras via a (stable and symmetric monoidal) $\infty$-categorical enhancement of Kasparov's classical $KK$-theory. The main purpose is to present, in the simplest case, homotopy theoretic arguments for classical results on operator $K$-theory, including Swan's theorems, K\"unneth and universal coefficient formulas, the bootstrap class, variations of Karoubi's conjecture, and spectra of units for strongly self-absorbing $C^*$-algebras, as well as some new aspects on twisted $K$-theory and coherent multiplicative structures on $C^*$-algebras, viewed as objects in the previously mentioned $\infty$-category. 
\end{abstract}

\maketitle

\tableofcontents

\section{Introduction}
$KK$-theory was introduced originally by Kasparov in his work about the Novikov conjecture \cite{kasparovinvent}. It is a bivariant generalisation of operator-algebraic $K$-theory which combines it with its dual, called $K$-homology theory. As evident from its definition via Fredholm modules $KK$-theory provides a natural receptacle for index invariants. %$KK$-theory 
It has applications in representation theory and is one of the most important invariants in the classification programme of simple, nuclear $C^*$-algebras. Kasparov's abelian group valued functor $(A,B) \mapsto KK(A,B)$ is contravariant in $A$ and covariant in $B$. Kasparov furthermore defined a unital, bilinear and associative intersection product
\[
	KK(B,C) \otimes KK(A,B) \to KK(A,C)\ . 
\]
This allows to interpret the groups $KK(A,B)$ as the hom-groups of an additive category $\mathrm{KK}$ whose objects are the $C^{*}$-algebras, and in which the intersection product is the composition of morphisms. Operator-algebraic $K$-theory and $K$-homology can be recovered from the $KK$-groups via natural isomorphisms $K_0(A) \cong KK(\C,A)$ and $K^0(A) \cong KK(A,\C)$. In his work on Karoubi's conjecture on the algebraic $K$-theory of stable $C^*$-algebras, Higson characterized the additive category $\mathrm{KK}$ by a universal property \cite{Higson87, higsa}. His work already suggested that $\mathrm{KK}$ should be a (1-categorical) localization of the category of separable $C^*$-algebras which would provide a universal characterisation of the category $\mathrm{KK}$ without appealing to its additivity.
%functor $\nCalg\to \mathrm{KK}$ by a universal property. 
In his work on homotopy theory for $C^*$-algebras \cite{Uuye:2010aa}, Uuye showed that this is indeed the case. Higson's characterization remains, however, the most useful one in practice.

Some 20 years after Higson's work, Meyer and Nest proved in \cite{MR2193334} that $\mathrm{KK}$ has in fact a natural triangulated structure. With Lurie's development of algebra in the context of higher categories \cite{HA}, it was then natural to ask whether there exists a stable $\infty$-category such that its underlying homotopy category reproduces this triangulated category (note that this happens for example in the case of the stable homotopy category). {That this is indeed the case}
was established in work of the second author with Nikolaus \cite{LN}, and used to obtain spectrum valued comparisons between the topological $K$-theory and the algebraic $L$-theory of complex $C^*$-algebras. The motivation behind such a comparison was to provide an equivalence between the $L$-theoretic Farrell--Jones conjecture and the Baum--Connes conjecture away from 2. Moreover, in \cite{LNS} similar methods were used to determine the algebraic $L$-spectra and $L$-groups of real $C^*$-algebras in terms of the topological $K$-spectrum and $K$-groups. In \cite{KKG}, the case of $G$-equivariant $KK$-theory for $G$ a discrete group was worked out and used in \cite{bel-paschke} to formulate and prove an equivariant form of Paschke duality.

In this survey we will explain how the use of (Dwyer--Kan) localizations in the world of $\infty$-categories leads to the stable $\infty$-category $\KK$ which enhances the triangulated category $\mathrm{KK}$ of Meyer--Nest, following \cite{LN}, and several of its applications. 

A consequence of this construction is the realisation of $K$- and $KK$-theory groups as the homotopy groups of corresponding spectra. 
Most of the literature about $K$-theory is only concerned with $K$- and $KK$-theory groups, and the spectrum aspect is considered more like a homotopy theoretic curiosity,
except for the above mentioned papers, some applications to assembly maps 
\cite{joachimcat} \cite{joachim} and \cite{DP1,DP2,DP3} in which bundles of stabilised strongly self-absorbing $C^*$-algebras are classified via the unit spectrum of $K$-theory and its localisations. We think that the homotopy-theoretic picture was neglected, because constructions of $K$- and $KK$-theory spectra using point-set models are technically involved, in particular when including multiplicative structures.

The goal of the present paper is to advertise the spectrum-valued point of view on $K$- and $KK$-theory, 
by providing examples where working with spectra leads to simplifications of statements and calculations, in particular, when certain calculations have already been done by topologists. The present article focuses on the ordinary theory of complex $C^*$-algebras, but analogs of our approach exist (and have already been exploited) in various generalizations, e.g.\ for real $C^*$-algebras and $G$-equivariant $C^*$-algebras. $E$-theory (again also in the real and equivariant cases) allows for a completely parallel treatment \cite{Bunke:2023aa}. 

Many of the homotopy theoretic formulations we present in the case of non-equivariant complex $C^*$-algebras have analogs in these more general settings. This applies e.g.\ to the \ K\"unneth and universal coefficient theorems. In the general case these results have very natural and simple formulations as statement about spectra, but their translations to groups becomes complicated. In this respect the case of complex $C^{*}$-algebras with no group action is misleadingly simple  because of the simpleness of the homological algebra  over the graded ring $K_{*}(\C)\cong \Z[\beta,\beta^{-1}]$ (it has graded global dimension $1$).

To highlight the formal simplicity of spectrum-valued $K$- and $KK$-theory  using homotopy theory, we briefly outline the situation as follows: 
{One defines the $\infty$-category $\KK_\sepa$ to be the $\infty$-categorical localization of $\nCalg_\sepa$ at the $\mathrm{KK}$-equivalences, i.e.\ all morphisms which are sent to isomorphisms in the ordinary category $\mathrm{KK}$. One then shows that $\KK_\sepa$ is stable and the localization functor $\kk_\sepa\colon \nCalg_\sepa \to \KK_\sepa$ is canonically symmetric monoidal and the initial functor to a stable $\infty$-category which is homotopy invariant, stable, and semi-exact.\footnote{We explain these properties in more detail in the body of the text. Also, homotopy invariance can in fact be dropped here.} This is the analog of Higson's characterization of the additive category $\mathrm{KK}$.\footnote{Note that a similar such characterization is not possible for the triangulated category, as it is not a property of a functor to be triangulated, rather it requires to first specify extra structure.} We then define $\KK = \Ind(\KK_\sepa)$ to be the ind-completion of $\KK_\sepa$ and obtain a canonical functor $\kk\colon \nCalg \to \KK$ defined on all $C^*$-algebras. It has similar (universal) properties as $\kk_{\sepa}$.}

 The composition of the functor \begin{equation}\label{rfekrjvkofbvfsdvfvsfdv}\map_{\KK}( \beins,-)\colon \KK\to \Sp   
\end{equation} with $\kk$
turns out to be equivalent to a spectrum valued enhancement of the classical operator $K$-theory functor {due to Joachim \cite{joachimcat, joachim}}. {Combining the universal property of $\kk$ with appropriate versions of the Yoneda lemma, one can characterize the spectrum-valued $K$-theory functor by a universal property. In particular, it is uniquely determined by its properties.}

The applications of the homotopy theoretic point of view presented in this survey include
\begin{enumerate}
\item[-] Swans theorem in $K$- and $KK$-theory.
\item[-] K\"unneth and universal coefficient formulas, as well as a description of the UCT (aka bootstrap) class.
\item[-] Spectra of units for strongly self-absorbing $C^*$-algebras and coherent multiplicative structures.
\item[-] Variations of Karoubi's conjecture.
\item[-] $L$-theory of $C^*$-algebras.
\item[-] Twisted $K$-theory.
\end{enumerate}

The discussion of coherent multiplicative structures on $C^*$-algebras, viewed as objects of $\KK$, as well as that of twisted $K$-theory is, to the best of our knowledge, new. The other results are well-known and at most presented in new form.

We review the most important aspects of $\infty$-categories as well as $K$- and $KK$-theory for $C^*$-algebras in the body of the text, but assume that the reader is already familiar with these concepts.
%\vspace{2mm}
%\ulrich{\textbf{Acknowledgement:} {\em Ulrich Bunke was supported by the SFB 1085 (Higher Invariants) funded by the Deutsche Forschungsgemeinschaft (DFG).}\fuli{muss leider sein \ulrich{Habe die Formatierung ein wenig geändert. Hoffe, okay so? Wir hatten auch ein thanks$\{\}$-Kommando auf der ersten Seite, wo man die Acknowledgements auch unterbringen könnte. Ich habe das jetzt erst einmal auskommentiert. }}\markus{ich wuerde das eher dort auftauchen lassen. funding informationen haben ja nicht wirklich den stellenwert eines acknowledgements.}}

\section{Higher structures on  $K$-theory}

\subsection{Classical operator $K$-theory}
We let $\nCalg$ denote the category of complex $C^{*}$-algebras, possibly non-unital, with $*$-homomorphisms which possibly do not preserve units. Classically, topological $K$-theory of $C^{*}$-algebras is considered as a functor
\begin{equation}\label{fadfdsf}
K_{*}\colon  \nCalg\to  \Ab^{\Z} 
\end{equation} 
with values in the category  $\Ab^{\Z} $ of $\Z$-graded abelian groups. We begin by summarising some of the basic constructions and properties of this functor treated in any of the standard references, e.g.\ \cite{blackadar}, \cite{higson_roe}, \cite{RordamLarsenLaustsen}.
 
\begin{enumerate}[label=(\roman*)]

\item \textbf{Definitions:}  For $A$ in $\nCalg$ the groups $K_{0}(A)$ and the higher $K$-groups $K_{i}(A)$ for $i>0$, respectively,  are  defined explicitly in terms  {of homotopy classes} of projections or homotopy groups of the space of unitaries in matrix algebras over the unitalization $A^{+}$  of $A$. In greater detail, if $A$ is unital, then 
\[K_{0}(A):=\pi_{0}(P^{s}(A))^{\group}\ ,\]
where  $$P^{s}(A):=\Hom_{\nCalg}(\C,A\otimes \mathbb{K})$$ is the space of stable projections of $A$ and $\mathbb{K}$ is the algebra of compact operators on a separable Hilbert space. Its set of  components has a semigroup structure given by the orthogonal sum of projections, and the superscript $(-)^{\group}$ indicates group completion. If $A$ is non-unital, then one sets  $$K_{0}(A):=\ker(K_{0}(A^{+})\to K_{0}(\C))\ ,$$  where $A^{+}$ is the unitalization of $A$ and $A^{+}\to \C$ is the canonical homomorphism.
For $i\ge 1$ one defines $$K_{i}(A):=\pi_{i-1}(U^{s}(A))\ ,$$ where $$U^{s}(A):=\{U\in U(A\otimes \mathbb{K})^{+}\mid 1-U\in A\otimes \mathbb{K}\}$$ is the topological group of stable unitaries of $A$. The groups $K_{i}(A)$ for $i<0$ are not  a priori defined.

\item \textbf{Bott periodicity:}
It turns out that the groups $K_0(A)$ and $K_1(A)$ determine all $K$-groups\footnote{ {This is a special  fact for complex $C^*$-algebras. In contrast,  in the case of  real $C^*$-algebras one must consider  eight $K$-groups.}} of $A$ since there are natural Bott periodicity isomorphisms $K_{ j}(A) \to K_{j+2}(A)$ for all $j$ in $\nat$. They are used to define $K_{i}(A)$ for $i<0$ such that $K_{j}(A)\cong \K_{j+2}(A)$ for all $j$ in $\Z$.

\item \textbf{Homotopy invariance:}
A homotopy between two $*$-homomorphisms $f_{0},f_{1}\colon A\to B$ is a path in $\Hom_{\nCalg}(A,B)$  which is continuous in the point-norm topology. For homotopic maps $f_0$ and $f_1$ we have the equality  $K_{*}(f_{0})=K_{*}(f_{1})$, i.e.\ the functor $K_{*}$ is  {homotopy invariant}. In particular, $K_{*}$ sends homotopy equivalences between $C^{*}$-algebras to isomorphisms.

\item \textbf{$\mathbb{K}$-stability:}
Let $p$ be a minimal non-zero projection in the algebra $\mathbb{K}$. It induces a homomorphism $\C\to \mathbb{K}$ by $\lambda\mapsto \lambda p$ called the upper corner embedding. We use the same name for the  induced map $A\cong A\otimes  \C\to A\otimes \mathbb{K}$ for any $C^{*}$-algebra $A$. The induced map $K_*(A) \to K_*(A \otimes \mathbb{K})$ is an isomorphism. 

\item \textbf{Exactness:}
Associated to a short exact sequence 
\begin{equation}\label{advasdcasdca}0\to A\stackrel{i}{\to} B\stackrel{\pi}{\to} C\to 0 \end{equation}  
of $C^{*}$-algebras, one can construct a   {boundary operator} $\partial\colon K_*(C) \to K_{*-1}(A)$
 such that the sequence \begin{equation}\label{cdscasdcasdcasdcdc}
\dots \stackrel{\partial }{\lto} K_{*}(A)\stackrel{i_*}{\lto} K_{*}(B)\stackrel{\pi_*}{\lto} K_{*}(C)\stackrel{\partial}{\lto} K_{*-1}(A) \stackrel{i_{*}}{\lto} \dots
\end{equation} 
is exact. 
The boundary operator depends naturally on the exact sequence such that a map of  short exact sequences yields a map of long-exact sequences of $K$-theory groups. 

\item \textbf{Filtered colimits:}
The functor $K_*$ preserves filtered colimits. This is shown for $K_0$ and $K_1$, using the description via projections and unitaries, and then prolongs to all $K$-groups by Bott periodicity.

\item \textbf{Exterior multiplications:}
The  category $\nCalg$ has two canonical symmetric monoidal structures given by the minimal and maximal tensor products. In order to avoid clumsy notation in the present paper we will usually work with the maximal tensor product denoted  by $\otimes$, though many assertions involving tensor products have versions for the minimal tensor product as well.\footnote{{Technically speaking, the identity refines to a lax symmetric monoidal functor $(\nCalg, \otimes_{\min}) \to (\nCalg,\otimes_{{\max}})$, so whenever we assert a statement about lax symmetric monoidal functors from $(\nCalg,\otimes_{{\max}})$ to some symmetric monoidal $(\infty)$-category, we obtain a corresponding statement for the minimal tensor product by precomposition with this lax symmetric monoidal functor.}} We equip $\Ab^{\Z}$  with the symmetric monoidal structure given by the graded tensor product. The $K$-theory functor $K_{*}$ can be complemented by exterior multiplication maps
\begin{equation}\label{afdsfadsfaewad}
K_{*}(A)\otimes K_{*}(B)\to K_{*}(A \otimes  B)
\end{equation} 
which in degree zero is induced by taking tensor products of projections. In fact, these exterior multiplication maps are part of a {lax symmetric monoidal structure} on the $K$-theory functor \eqref{fadfdsf}. 
\end{enumerate}
 
\begin{rem} \label{wtijgowergferwgwref} 
For any symmetric monoidal category $\cT$  {with tensor unit $\beins$ one can form the category $\CAlg(\cT)$ of commutative algebra objects and for $A$ in $\CAlg(\cT)$, one can further consider the category $\Mod_{\cT}(A)$ of $A$-modules in $\cT$. For example, $\beins$ refines to a commutative algebra object in $\CAlg(\cT)$ in {a} unique  way and any object of $\cT$ has  a unique  structure of a $\beins$-module, yielding a canonical equivalence}
\begin{equation}\label{afdasdfdscasdcasd}
\Mod_{\cT}(\beins) \stackrel{\simeq}{\lto} \cT\ .
\end{equation}
A lax symmetric monoidal functor $F\colon \cT\to \cS$ between symmetric monoidal categories
induces a functor  $F\colon \CAlg(\cT)\to \CAlg(\cS)$ between the categories of commutative algebras  and a lax symmetric monoidal functor 
$F\colon \Mod_{\cT}(A)\to \Mod_{\cS}(F(A))$ between corresponding module categories in a canonical way.
In particular, $F$ refines to a lax symmetric monoidal functor
\begin{equation}\label{sdvsdcsacd}
F\colon \cT\simeq \Mod_{\cT}(\beins)\to \Mod_{\cS}(F(\beins))\ .
\end{equation} 
{When $\cT$ is clear from context, we {will simply write}  $\Mod(\beins)$ for $\Mod_{\cT}(\beins)$.}
\hB
\end{rem}

We apply \cref{wtijgowergferwgwref} to the lax symmetric monoidal functor ${K_{*}}:\nCalg \to  \Ab^{\Z}$. The tensor unit of $\nCalg$ is the $C^{*}$-algebra $\C$. 
We get a commutative algebra $\KU_{*}:=K_{*}(\C)$ in $\CAlg(\Ab^{\Z})$ and, by specializing \eqref{sdvsdcsacd}, a refinement of the  functor  in  \eqref{fadfdsf} to a lax symmetric monoidal functor 
\begin{equation}\label{vsavdscasdcasdcd}
{K_{*}}\colon\nCalg\to \Mod(\KU_{*})\ .
\end{equation} 
On morphism sets it induces a map 
\[\Hom_{\nCalg}(A,B)\to \Hom_{\KU_{*}}(K_{*}(A),K_{*}(B))\ .\]
We note that $\KU_{*}\cong \Z[\beta,\beta^{-1}]$, where {$\beta$ is the Bott element and} has degree $2$. 
For every $C^{*}$-algebra $A$, the $2$-periodicity of $K_{*}(A)$ is implemented by the multiplication by $\beta$.

\begin{ex}
If $A$ is a   commutative $C^{*}$-algebra, then the multiplication map
$A\otimes A\to A$ is a homomorphism of $C^{*}$-algebras.  If $A$ is  {in addition} unital, then this together  with the unit $\C\to A$ turns $A$ into an {object of} $\CAlg(\nCalg)$. Since 
$K_{*}\colon \nCalg\to \Mod(\KU_{*})$ is lax symmetric monoidal, we get the commutative algebra $K_{*}(A)$  in $\Calg( \Mod(\KU_{*}))$.  Its multiplication map is given by the composition
\[K_{*}(A)\otimes_{\KU_{*}} K_{*}(A)\to K_{*}(A\otimes A)\to K_{*}(A)\ ,\]
where the first map  is the exterior multiplication of \eqref{afdsfadsfaewad}, and the second is induced by the multiplication of $A$. 
\hB 
\end{ex}

A natural question which turned out to be essential not only in Elliott's classification of AF-algebras \cite{Elliott_1976}, but also in the   recent papers in the classification program, is how much information about a morphism $f \colon A\to B$ in $\nCalg$ can be recovered from its induced map ${K}_{*}(f)\colon {K}_{*}(A)\to {K}_{*}(B)$, see e.g.\  \cite[Thm.~8.3.3 and Thm.~8.4.1]{rordam:classification} or \cite[Thm.~9.8]{CGSTW:hom}. The following example shows that the transition from $f$ to $K_*(f)$ is far from being lossless.

\begin{ex}\label{ergjoewrfgerwfwe} 
Let us consider the manifold $\R\P^2$ together with its canonical filtration by submanifolds $\R\P^0 \subseteq \R\P^1 \subseteq \R\P^2$. Here  $\R\P^0 \cong \ast$ is the base point  and $\R\P^1 \cong S^1$. Restriction to $\R\P^{1}$ induces   a surjection
\[C(\R\P^2,*)\stackrel{f}{\to} C(S^{1},*)\] 
of $C^{*}$-algebras, where $C(\R\P^2,*)$ and  $C(S^{1},*)$ denote the continuous functions on the respective spaces  which vanish at the base point.    The exact sequence 
\begin{equation}\label{sDaddasd}
0\to \ker(f)\to C(\R\P^2,*)\to  C(S^{1},*) \to 0
\end{equation}
 gives rise to the following long exact sequence of $K$-theory groups (starting with $K_{1}(\ker(f))$) 
 \begin{equation}\label{rfiojfoqwfdwdwqdqwedqewdq}
\cdots\to 0\to 0\xrightarrow{K_{1}(f)} \Z \xrightarrow{2} \Z\to \Z/2\Z\xrightarrow{K_{0}(f)} 0\to \cdots\ 
\end{equation}
In particular, the map $f$ induces the zero map on $K$-theory groups. Hence $K_{*}$ can not distinguish $f$ from the zero homomorphism $C(\R\P^{2},*)\to  C(S^{1},*)$.
\end{ex}

As highlighted by the example above, the homomorphism $K_*(f)$ induced by $f$ does not keep track of the boundary maps in the $6$-term exact sequence associated to $f$. In \cref{gwjiogfrefwef} we will see that this changes when we consider $K$-theory as a spectrum-valued functor instead.

\subsection{Spectrum valued operator $K$-theory}
  
Homotopy theory enters the discussion of $C^{*}$-algebra $K$-theory by interpreting the $\Z$-graded abelian group $K_{*}(A)$ as the collection of homotopy groups $\pi_{*}K(A)$ of a $K$-theory spectrum $K(A)$. In the following we give a brief summary of the necessary $\infty$-categorical input we shall use {to talk about this spectrum valued $K$-theory functor}.

We will not need to understand any specific model of the   category of spectra $\Sp$. 
The basic spectra encountered in the present paper arise  as  mapping spectra between objects of stable $\infty$-categories. So we must understand the category $\Sp$ as the natural home of these mapping spectra, {and we will do so momentarily. Before, we note that one can characterize $\Sp$  itself    as the initial stable cocomplete $\infty$-category \cite[\S 1.4.3, 1.4.4, 4.8.2]{HA}, a universal property which specifies it up to unique equivalence}.
{The $\infty$-category of spectra} is generated by an object $\bS$ called the sphere spectrum  {in the sense} that evaluation at $\bS$ induces an equivalence 
$$\Fun^{\colim}(\Sp,\bD)\to \bD$$ for any cocomplete stable $\infty$-category $\bD$,
where the superscript $\colim$ indicates the full subcategory of colimit preserving functors.  In addition, $\Sp$ is also complete, i.e.,  it admits all small limits.

The stability of a finitely {(co)complete  $\infty$-category like the $\infty$-category $\Sp$ can be characterized by the following properties:}
\begin{enumerate}
\item The initial object is also terminal (such categories are called pointed) and we denote an initial object by $0$.
\item The (therefore existing) canonical map from a finite coproduct to a finite product is an equivalence (such categories are called {semiadditive}), and we denote the binary coproduct functor by $\oplus$.
\item A square is a pushout if and only if it is a pullback.
\end{enumerate}

{To describe the relation between stable categories and spectra in general, we first remark the following.}
\begin{rem}\label{eiorghbergerg}
There is a more fundamental $\infty$-category $\Spc$ of spaces which is characterized similarly 
 as the  initial cocomplete $\infty$-category. It is generated by a final object $*$ {again in the sense that}
the evaluation at $*$ induces an equivalence 
$$\Fun^{\colim}(\Spc,\bD)\to \bD$$ for any cocomplete $\infty$-category $\bD$.
The $\infty$-categories of spaces and spectra are related by
an adjunction
$$\Sigma^{\infty}_{+}\colon \Spc\leftrightarrows \Sp\colon \Omega^{\infty}\ ,$$
The $\infty$-category of spaces is also complete and hence symmetric monoidal with respect to cartesian products. The $\infty$-category of $\Sp$ then admits a unique symmetric monoidal structure\footnote{classically this is the smash product of spectra and often denoted $\wedge$.} $\otimes$ with tensor unit $\bS$, for which $\Sigma^\infty_+$ refines to a symmetric monoidal functor. Consequently, $\Omega^\infty$ is canonically lax symmetric monoidal. {Furthermore, it has the following universal property: For any stable $\infty$-category $\cC$, the functor $\Omega^{\infty}$ induces an equivalence
\[ \Fun^{\mathrm{lex}}(\cC,\Sp) \stackrel{\simeq}{\to} \Fun^{\mathrm{lex}}(\cC,\Spc) \]
where the superscript lex stands from left exact, that is finite limit preserving, functors.}

By default, an $\infty$-category
$\cC$ is enriched in $\Spc$  in the sense that there is a mapping space bifunctor
$$\Map_{\cC}(-,-)\colon \cC^{\op}\times \cC\to \Spc\ , \quad (A,B)\mapsto \Map_{\cC}(A,B)\ .$$
 
For a stable $\infty$-category  $\cC$, {the universal property of $\Omega^{\infty}$ implies that}  the mapping space bifunctor canonically refines to a mapping spectrum bifunctor  {$\map_{\cC}$ such that there exists an essentially unique dashed arrow in the diagram}
\[\begin{tikzcd}[column sep=large]
	& \Sp \ar[d,"{\Omega^\infty}"] \\
	\cC^\op \times \cC \ar[r,"{\Map_\cC(-,-)}"'] \ar[ur, dashed, bend left, "{\map_\cC(-,-)}"] & \Spc
\end{tikzcd}\]
 {rendering it commutative.
As mentioned above, a natural way to construct spectra as objects in $\Sp$ is therefore to realize them as mapping spectra in stable $\infty$-categories.}\hB
\end{rem}

Any {pointed} $\infty$-category with finite colimits and a terminal object comes equipped with a suspension (or shift) endofunctor $\Sigma$ given by the push-out of the span of endofunctors $0 \leftarrow \id \to 0$.   Stability {of such an $\infty$-category} is equivalent to
 the {statement} that $\Sigma$ is an autoequivalence. Applying this to  $\Sp$, for 
 a spectrum $E$  {one}  defines  the homotopy groups
 $$\pi_{k}(E):=\Hom_{\ho(\Sp)}(\Sigma^{k}\bS,E)\ , \quad k\in \Z\ ,$$
 where $\ho(\Sp)$ denotes the homotopy category.

Forming homotopy groups of spectra gives rise to a lax
 symmetric monoidal functor
\[\pi_{*}\colon\Sp\to \Ab^{\Z}\ .\] This functor 
 is conservative (i.e.\ it detects equivalences) and preserves {products}, sums and filtered colimits, and satisfies
 $\pi_{*}\circ \Sigma\simeq \pi_{*-1}$. 
A fibre sequence in  stable $\infty$-category is a pushout (or equivalently a pullback)
  square as follows.\begin{equation}\label{fibre-sequence} \xymatrix{F\ar[d]\ar[r]&E\ar[d]\\0\ar[r]&B}
\end{equation} 
The fibre sequence  {determines, and} is determined by, 
the associated boundary map  
$ \partial \colon B\to \Sigma F$  {obtained via pasting pushout diagrams and using the definition of the suspension functor:}
\[\begin{tikzcd}
	F \ar[r] \ar[d] & E \ar[r] \ar[d] & 0 \ar[d] \\
	0 \ar[r] & B \ar[r] & \Sigma F
\end{tikzcd}\]

 If \eqref{fibre-sequence} is a fibre sequence of spectra, then
  the boundary map  induces a boundary map  $\partial \colon \pi_*(B) \to \pi_{*-1}(F)$ on the level of homotopy groups  {which makes} the sequence  
 \begin{equation}\label{sdcvscsacsdca}
 \dots \lto \pi_*(F) \to \pi_*(E) \to \pi_*(B) \xrightarrow{\partial} \pi_{*-1}(F) \to \dots\end{equation}
is exact.

\begin{rem}
A convenient way to manipulate objects in an $\infty$-category
$\cC$  is to take  {colimits and limits over suitable diagrams, provided $\cC$ admits these (co)limits. Particularly easy diagrams are constant diagrams, indexed over objects of $\Spc$ (thought of as $\infty$-groupoids). These give rise to}
tensor and power functors which again exist if $\cC$ is sufficiently cocomplete or complete.

The tensor is a bifunctor
 $$\cC\times \Spc\to \cC\ , \quad (C,X)\mapsto C\otimes X\ $$ which, in view of the universal property of $\Spc$ it
  is uniquely characterised {by the fact that it preserves colimits in the second variable and by setting $C\otimes *\simeq C$}.
Similarly, the power is a bifunctor
 $$\Spc^{\op}\times\cC\to \cC\ , \quad  (X,C)\mapsto C^{X} $$
again uniquely characterised by  {the fact that it sends limits (in $\Spc^{\op}$) in the  first variable  to limits and by setting }$C^{*} \simeq C$.  
For fixed $X$ in $\Spc$ we have an adjunction   \begin{equation}\label{dsacsdcdacsda}
-\otimes X:\cC\leftrightarrows \cC:(-)^{X}\ .
\end{equation}\hB
\end{rem}
 \begin{rem}\label{werijgwergwerfwef}
 For us, it is important to know that there is a functor 
\begin{equation}\label{sfdgsfdgfsgsfdg}
\ell:\Top\to \Spc
\end{equation}  representing  $\Spc$ as  the Dwyer-Kan localization of $\Top$ at the weak equivalences. We  sometimes use the expression of $\ell(X)$
in terms of  the category of singular simplices $\Simp(X)$  of $X$. Its objects are the continuous maps $\sigma\colon \Delta^{n}\to X$, and its morphisms
$\tau\to \tau'$ are maps $\phi:[n]\to [n']$ in $\Delta$ such that
$$\xymatrix{\Delta^{n}\ar[rr]^{|\phi|}\ar[dr]^{\tau}&&\Delta^{n'}\ar[dl]_{\tau'}\\&X&}$$ commutes.
We then have an equivalence   $$\ell(X)\simeq  \colim_{\Simp(X)}*$$ in $\Spc$. 

We can interpret the canonical functor $\Simp(X)\to \ell(X)$ as the
Dwyer-Kan localization  of  $\Simp(X)$ at all morphisms.

Let $\cC$ be a sufficiently complete and cocomplete $\infty$-category and
 $C$ be an object in $\cC$. For a   topological space $X$ we can form new objects
$C\otimes \ell(X)$ and $C^{\ell(X)}$ in $\cC$. Below  we will usually omit the $\ell$ from the notation in order to simplify.  Using the presentation of $\ell(X)$ above we get  equivalences
\begin{equation}\label{fdsvfvfdvsvffvr}
 C\otimes  X\simeq \colim_{\Simp(X)}\const_{C} \ , \quad 
 C^{X}\simeq  \lim_{\Simp(X)}\const_{C}\ ,
 \end{equation}  
where $\const_{C}$ denote the constant functor on $\Simp(X)$ with value $C$.
 
If $\cC$ is stable, then the tensor and cotensor extend along $\Sigma^{\infty}_{+}$ to bifunctors
$$\cC\times \Sp\to \cC\ , \quad (C,X)\mapsto C\otimes X$$ 
 and
$$\Sp^{\op}\times\cC\to \cC\ , \quad  (X,C)\mapsto C^{X}\ . $$
with similar compatibility with colimits and limits, respectively.  

If $E$ is in $\Sp$, then the    homotopy groups $\pi_{*}(E\otimes X)$ and $\pi_{*}(E^{X})$  can be calculated by Atiyah-Hirzebruch spectral sequences with second pages
\begin{equation}\label{casdcsdaccsca1}
E_{2}^{p,q}\cong H_{p}(X,\pi_{q}E) \quad  \Rightarrow \quad \pi_{p+q}(E\otimes X) \end{equation} for the tensor and  \begin{equation}\label{casdcsdaccsca}
E_{\ulrich{2}}^{p,q}\cong H^{p}(X,\pi_{-q}E)\quad  \Rightarrow \quad \pi_{-p-q}(E^{X})
\end{equation}
  for  the power. 
\hB\end{rem}

We now come back to $C^{*}$-algebra $K$-theory with the following proposition:

\begin{prop}\label{wekojgwerferfwe}
There exists an essentially unique   functor
$$K\colon \nCalg\to \Sp$$
which is  homotopy invariant, $\mathbb{K}$-stable,  preserves filtered colimits and  
sends short exact sequences of $C^{*}$-algebras to fibre sequences
  such that 
\begin{equation}\label{ewfqwefqewfeqdqwedewdq}\xymatrix{\nCalg\ar[rr]^{K}\ar[dr]^{K_{*}}&&\Sp\ar[dl]_{\pi_{*}}\\&\Ab^{\Z}&}
\end{equation} 
commutes.\footnote{ {{By \cref{werjigoegsfg}} %In fact,
 it suffices to fix an isomorphism between $K_0$ and $\pi_0K$.}}
In addition, this functor has an essentially unique lax symmetric monoidal refinement.
\end{prop}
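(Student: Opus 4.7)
The plan is to build $K$ via the universal functor $\kk\colon \nCalg \to \KK$ and the mapping spectrum out of the tensor unit. Specifically, I will define
\[ K := \map_{\KK}(\beins, -) \circ \kk \colon \nCalg \to \Sp \]
as in \eqref{rfekrjvkofbvfsdvfvsfdv}, and then verify each of the required properties using the corresponding property of $\kk$. Since $\kk$ is homotopy invariant and $\mathbb{K}$-stable (it is in fact the initial such functor to a stable $\infty$-category which is also semi-exact), and $\map_{\KK}(\beins, -)$ is a functor of $\infty$-categories, these two properties are inherited by $K$ tautologically. The property that $\kk$ sends short exact sequences to fibre sequences combines with the fact that mapping spectra out of a fixed object preserve fibre sequences (they are exact functors out of a stable $\infty$-category) to yield the exact sequence property for $K$. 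Preservation of filtered colimits will follow from the fact that $\kk$ extends through the Ind-completion to $\KK = \Ind(\KK_\sepa)$, that the image of a separable $C^{*}$-algebra (and in particular $\beins = \kk(\C)$) is a compact object of $\KK$, and that mapping spectra out of compact objects commute with filtered colimits.

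The compatibility with $K_{*}$ in diagram \eqref{ewfqwefqewfeqdqwedewdq} is the step I expect to be the main obstacle, since it requires an identification of $\pi_{*} \map_{\KK}(\beins, \kk(A))$ with the classically defined group $K_{*}(A)$. My plan is to invoke the comparison to Joachim's construction of a spectrum valued $K$-theory functor: Joachim's functor has the same universal characterisation, and its underlying group valued functor is known to compute classical operator $K$-theory. Thus it suffices to fix a single natural isomorphism $\pi_{0} K(A) \cong K_{0}(A)$ and to extend it by Bott periodicity and the long exact sequence comparison afforded by \eqref{sdcvscsacsdca} and \eqref{cdscasdcasdcasdcdc}; the footnote in the statement indicates that this identification in degree zero suffices.

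For uniqueness, suppose $K'\colon \nCalg \to \Sp$ is any functor satisfying the listed properties. The properties of being homotopy invariant, $\mathbb{K}$-stable, and sending short exact sequences to fibre sequences match (after invoking stability of $\Sp$ to see that the fibre sequence condition subsumes semi-exactness) the universal property of $\kk_{\sepa}$ when restricted to separable $C^{*}$-algebras. Hence there is an essentially unique factorisation $K'|_{\nCalg_{\sepa}} \simeq F \circ \kk_{\sepa}$ for some exact functor $F\colon \KK_{\sepa} \to \Sp$; filtered colimit preservation then extends $F$ to a colimit-preserving functor $\tilde{F}\colon \KK \to \Sp$ factoring $K'$. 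Any such colimit-preserving exact functor $\tilde{F}\colon \KK \to \Sp$ is determined by its value on $\beins$ together with the compatibility of the induced natural transformation $\tilde{F}(-) \to \map_{\KK}(\beins,-) \otimes \tilde{F}(\beins)$; combined with the compatibility condition \eqref{ewfqwefqewfeqdqwedewdq} which forces $\pi_{0} \tilde{F}(\beins) \cong \Z$, this pins $\tilde{F}$ down to be $\map_{\KK}(\beins,-)$ up to essentially unique equivalence.

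Finally, for the lax symmetric monoidal refinement, I will use that $\kk$ is canonically symmetric monoidal (by the universal property in the symmetric monoidal setting) and that, for any symmetric monoidal $\infty$-category $\cC$, the functor $\map_{\cC}(\beins, -)\colon \cC \to \Sp$ carries a canonical lax symmetric monoidal refinement, since $\beins$ is a commutative algebra object in an essentially unique way. Composing these two yields a canonical lax symmetric monoidal structure on $K$, and the essential uniqueness of this refinement follows by re-running the uniqueness argument above in the symmetric monoidal setting, using that the universal property of $\kk$ also holds at the level of symmetric monoidal functors to stable symmetric monoidal $\infty$-categories.
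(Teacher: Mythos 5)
Your overall strategy (define $K=\map_{\KK}(\beins,-)\circ\kk$ and inherit its properties from those of $\kk$) is exactly the paper's, but two of your verification steps rest on false premises. First, you claim that $\kk$ sends all short exact sequences of $C^{*}$-algebras to fibre sequences; it does not. The universal property only makes $\kk$ \emph{semi}-exact, i.e.\ it inverts the maps $\iota\colon A\to C(f)$ for \emph{semi-split} extensions, and \cref{egojwpegerferfw} records Skandalis's examples of extensions that $\kk_{\sepa}^{0}$ does not send to distinguished triangles. So full exactness of $K$ cannot be inherited from $\kk$; it is a genuinely stronger statement that requires classical input. The paper bridges the gap by replacing a general extension with the (always semi-split) mapping-cone sequence, applying semi-exactness there, and then proving that $K(\iota)\colon K(A)\to K(C(f))$ is an equivalence via the five lemma applied to the long exact sequences of $K_{*}$, homotopy invariance of $K_{*}$, and conservativity of $\pi_{*}$ together with $\pi_{*}K\cong K_{*}$. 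The same issue affects filtered colimits: compactness of $\beins$ only shows that $\map_{\KK}(\beins,-)$ preserves filtered colimits \emph{in} $\KK$, but $\kk$ is merely $s$-finitary and is not known to carry arbitrary filtered colimits of $C^{*}$-algebras to colimits in $\KK$; the paper again argues through the classical fact that $K_{*}$ preserves filtered colimits and the conservativity of $\pi_{*}$.

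Second, your uniqueness argument asserts that a colimit-preserving exact functor $\tilde F\colon\KK\to\Sp$ is determined by its value on $\beins$. This is false: the localizing subcategory of $\KK$ generated by $\beins$ is precisely the UCT class (\cref{wklgpwrfgerwfw9}), which is a proper subcategory. What is true, and what \cref{werjigoegsfg} actually uses, is the Yoneda lemma: since $\K=\map_{\KK}(\beins,-)$ is corepresentable, $\Nat(\K,\tilde F)\simeq\Omega^{\infty}\tilde F(\beins)$, so a class in $\pi_{0}F(\C)$ produces a canonical comparison map $K\to F$; one then needs the \emph{full naturality} of the isomorphism $\pi_{0}K(-)\cong\pi_{0}F(-)$ on all separable algebras, not merely the identification $\pi_{0}\tilde F(\beins)\cong\Z$, to check that this comparison is an equivalence (using exactness and Bott periodicity to propagate from degree zero). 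Your appeal to Joachim's model for the identification $\pi_{*}K\cong K_{*}$ is a legitimate alternative in spirit, though the paper instead derives it from the classical isomorphism $KK_{*}(\C,-)\cong K_{*}(-)$ together with $s$-finitariness.
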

\begin{proof} 

The classical proof of the existence of a functor as asserted in  \cref{wekojgwerferfwe} goes via an explicit construction using some point-set model for spectra {together with a} verification of the 
asserted properties, see e.g.\ \cite{joachimcat}. 
In contrast,   in  \cref{wekojgwerferfwe1} we {give an abstract definition of a functor using a stable categorical enhancement of Kasparov's triangulated KK-category, see \cref{qrijgfqowefdwefqwefqwedqwed} for a slightly refined version. By construction, this functor is homotopy invariant,  $\mathbb{K}$-stable, sends exact sequences with  completely positive splits to fibre sequences (i.e.\ it is semi-exact), $s$-finitary (see \cref{wjigriowrtgwefrefwerf}),  and it fits into the triangle \eqref{ewfqwefqewfeqdqwedewdq}. We will then show in \cref{werjigoegsfg} that it is already uniquely determined by these properties. The existence and uniqueness of the lax symmetric monoidal refinement
will be shown in  \cref{gqerhgperfqrferfwrf}.}
  
In the following we argue, using  {well-known properties} of the classical group-valued $K$-theory functor,
that the spectrum valued $K$-theory functor defined in this way is not only semi-exact, but exact, and that it not only $s$-finitary but  preserves all filtered colimits.
 
  In order to see that $K$ preserves filtered colimits we simply note that $\pi_{*}:\Sp\to \Ab^{\Z}$ is conservative  and preserves sums and filtered colimits, and  that $K_{*}$  preserves sums and filtered colimits. It then remains to show that $K$ sends all exact sequences  \eqref{advasdcasdca} 
to fibre sequences.
We form the following map of short exact sequences \begin{equation}\label{adscadscasdccd}
\xymatrix{0\ar[r]&A\ar[r]\ar[d]^{\iota}&B\ar[r]^{f}\ar[d]^{h}&C\ar@{=}[d]\ar[r]&0\\0\ar[r]&
C(f)\ar[r]&Z(f)\ar[r]^{\tilde f}&C\ar[r]&0}\ .
\end{equation}
  where $Z(f)$ and $C(f)$ are the mapping cylinder and mapping cone of $f$, respectively. The lower exact sequence is {semi-split} and
the map $h$ is a homotopy equivalence.  We now apply $K$ and get the commutative diagram
 $$\xymatrix{ &K(A)\ar[r]\ar[d]^{K(\iota)}&K(B)\ar[r]^{K(f)}\ar[d]^{K(h)}_{\simeq}&K(C)\ar@{=}[d] & \\ &
K(C(f))\ar[r]&K(Z(f))\ar[r]^{K(\tilde f)}&K(C)&}$$ in $\Sp$.
 Since $K$ is semi-exact (see \cref{wetkjgowergwerfw9}), the lower sequence is  a fibre sequence. 
 It therefore suffices to show that $K(\iota)$ is an equivalence. 
 To this end we apply $K_{*}$ to the diagram \eqref{adscadscasdccd} and consider the resulting map of long exact sequences. 
 Since $K_{*}(h)$ is an isomorphism by the homotopy invariance of $K_{*}$, the $5$-lemma implies that
 $K_{*}(\iota)$ is an isomorphism, too.  {This implies that $K(\iota)$ is an equivalence 
since $\pi_{*}$ is conservative  and there is an isomorphism $K_* \cong \pi_*K$.}
 \end{proof}

\begin{rem} 
 {The fact that the $\K$-theory functor sends all short exact sequences of $C^*$-algebras~\eqref{advasdcasdca} to fibre sequences of spectra  and thus encodes the boundary maps of the exact $6$-terms sequences in a natural way is one of the advantages 
 of the spectral picture over the group-valued $K$-functor.}
 {W}riting the sequence  \eqref{advasdcasdca} as a square
$$\xymatrix{A\ar[r]\ar[d] &B \ar[d] \\ 0\ar[r] & C} $$ and applying $
K$ we get a commutative  square
$$\xymatrix{K(A)\ar[r]\ar[d] &K(B) \ar[d] \\ 0\ar[r] & K(C)} $$ 
whose filler encodes the hidden data of the fibre sequence
 \begin{equation}\label{adfadfadfadf}K(A)\to K(B)\to K(C)\ .
\end{equation} 
 The boundary operator of the long exact sequence  
of homotopy groups \eqref{sdcvscsacsdca}  can be derived from this fibre sequence.  
This is  in contrast to the boundary operator in \eqref{cdscasdcasdcasdcdc} which has to be constructed in addition to the functor $K_{*}$.

Note that we have not {claimed or }shown that the boundary operator in \eqref{cdscasdcasdcasdcdc} coincides with the  one derived from the fibre sequence in \eqref{adfadfadfadf}. 
\hB
\end{rem}
 
Since $K$ is lax symmetric monoidal it  follows that $\KU:=K(\C)$ refines  to a commutative   algebra object in
$\Sp$, i.e.\ an object $\CAlg(\Sp)$. It is equivalent to the commutative algebra that topologists call $\KU$, constructed   as the  periodic version of the group completion of the topological category  of $\mathbb{C}$-vector spaces.
 For every
$C^{*}$-algebra $A$  the $K$-theory spectrum $K(A)$   naturally has the structure of a module over $\KU$.
The $\infty$-categorical analogue of~\eqref{sdvsdcsacd} gives the following refinement of  \cref{wekojgwerferfwe}
  \begin{cor}\label{rwefefefw}
  The spectrum-valued $K$-theory functor for $C^{*}$-algebras naturally refines to a lax symmetric monoidal functor $$K\colon \nCalg\to \Mod(\KU)$$
such that   
$$\xymatrix{\nCalg\ar[rr]^{K}\ar[dr]^{K_{*}}&&\Mod(\KU)\ar[dl]_{\pi_{*}}\\&\Mod(\KU_{*})&}$$
commutes. 
\end{cor}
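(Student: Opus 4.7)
The plan is to apply the $\infty$-categorical analogue of \cref{wtijgowergferwgwref} to the lax symmetric monoidal functor $K\colon \nCalg\to \Sp$ provided by \cref{wekojgwerferfwe}. The tensor unit of $(\nCalg,\otimes)$ is $\C$, and under the canonical equivalence \eqref{afdasdfdscasdcasd} we identify $\Mod_{\nCalg}(\C)\simeq \nCalg$. Similarly, $\Mod_{\Sp}(\bS)\simeq \Sp$. The general machinery of symmetric monoidal $\infty$-categories (see \cite{HA}) produces, from any lax symmetric monoidal functor $F\colon \cT\to \cS$ between symmetric monoidal $\infty$-categories and any $A\in\CAlg(\cT)$, an induced lax symmetric monoidal functor $\Mod_\cT(A)\to \Mod_\cS(F(A))$. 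Applying this to $K$ with $A=\C$ and using the identifications above yields a lax symmetric monoidal refinement
\[ K\colon \nCalg\simeq \Mod_{\nCalg}(\C)\to \Mod_{\Sp}(K(\C))=\Mod(\KU) \]
whose composition with the forgetful functor $\Mod(\KU)\to \Sp$ recovers the original functor of \cref{wekojgwerferfwe}.

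For the commutativity of the triangle, I would invoke the same module-theoretic principle on the second ``floor'' of the construction. The functor $\pi_*\colon\Sp\to \Ab^\Z$ is lax symmetric monoidal, and by \cref{wtijgowergferwgwref} (in its $1$-categorical form) it refines to a lax symmetric monoidal functor $\pi_*\colon \Mod(\KU)\to \Mod_{\Ab^\Z}(\pi_*\KU)=\Mod(\KU_*)$. The commutativity of
\[\xymatrix{\nCalg\ar[rr]^{K}\ar[dr]^{K_{*}}&&\Mod(\KU)\ar[dl]_{\pi_{*}}\\&\Mod(\KU_{*})&}\]
then follows by naturality from the commutativity of the corresponding triangle \eqref{ewfqwefqewfeqdqwedewdq} in \cref{wekojgwerferfwe}: both compositions are lax symmetric monoidal refinements of $K_*\colon \nCalg\to \Ab^\Z$, and uniqueness of such refinements (itself part of the asserted uniqueness of the lax symmetric monoidal structure on $K$ in \cref{wekojgwerferfwe}) identifies them.

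I expect the main obstacle to be purely bookkeeping: one must justify that the module-category construction $\Mod_{(-)}(-)$ indeed assembles into a functor on lax symmetric monoidal functors in the $\infty$-categorical setting, which requires invoking the appropriate results about $\CAlg$ and $\Mod$ as functors of symmetric monoidal $\infty$-categories from \cite{HA}. Once this formal input is in place, the remaining verifications are immediate from the universal property of the equivalence $\Mod_{\cT}(\beins)\simeq \cT$ and from the compatibility of $\pi_*$ with the module structures on both sides.
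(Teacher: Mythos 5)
Your proposal is correct and follows the paper's own route: the paper obtains the corollary precisely by applying the $\infty$-categorical analogue of \eqref{sdvsdcsacd} from \cref{wtijgowergferwgwref} to the lax symmetric monoidal functor $K\colon \nCalg\to\Sp$ of \cref{wekojgwerferfwe}, with tensor unit $\C$ and $\KU=K(\C)$. Your additional discussion of the commutativity of the triangle (via the refinement of $\pi_*$ to $\Mod(\KU)\to\Mod(\KU_*)$ and naturality of the module construction applied to \eqref{ewfqwefqewfeqdqwedewdq}) is more detail than the paper records, but it is consistent with the intended argument.
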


\section{Applications of spectrum-valued $K$-theory}

In this subsection we illustrate some aspects of the spectrum valued $K$-theory of $C^{*}$-algebras by various examples.

 \subsection{Detection of maps} \label{gwjiogfrefwef}
 We  revisit   \cref{ergjoewrfgerwfwe}.
The exact sequence   \eqref{sDaddasd} induces a fibre sequence
$$K(\ker(f))\to K(C(M,*))\xrightarrow{K(f)}  K(C(S^{1},*))$$
in $\Sp$ which is  determined up to equivalence by 
the map $K(f)$. The non-triviality of the boundary operator in \eqref{rfiojfoqwfdwdwqdqwedqewdq} (the multiplication by $2$ in the middle)
shows that $K(f)$ is not equivalent to the zero map, in contrast to the vanishing of the map $K_{*}(f)$.

\subsection{Künneth formulas} 
 \label{rgjhiergewrg9}
Given $C^{*}$-algebras $A$ and $D$
   the  {K\"unneth problem is the quest for a K\"unneth formula which calculates $K_{*}(A\otimes D)$ in terms of $K_{*}(A)$ and $K_{*}(D)$.}
  
  The lax symmetric structure on the  $\Mod(\KU_{*})$-valued version of $K_{*}$ gives for any $A$ and $D$  in $\nCalg$ a  factorization of the    {exterior multiplication map}  \eqref{afdsfadsfaewad} over a  homomorphism \begin{equation}\label{kuenneth-map}
K_{*}(A)\otimes_{\KU_{*}} K_{*}(D)\to K_{*}(A\otimes D)\ ,
\end{equation}
 and one could ask how far this homomorphism  is from being an isomorphism. 
  
 If $A\cong \C$, then it is {tautologically} an isomorphism for any $D$. 
 Since the functors $K_{*}$ and $-\otimes_{\KU_{*}}K_{*}(D)$ preserve  sums and filtered colimits, the class of algebras $A$ for which    \eqref{kuenneth-map} is an isomorphism for any $D$ is closed under the formation of sums and filtered colimits. {A further closure property would be 2-out-of-3 for exact sequences: Given a short exact sequence of $C^*$-algebras where two of the algebras are in the class for which  \eqref{afdsfadsfaewad} is an isomorphism for all $D$, is the same true for the third algebra? Fixing such a test $C^*$-algebra $D$, and using the long exact sequence of $K$-groups \eqref{cdscasdcasdcasdcdc}, this follows from the 5-lemma if $K_*(D)$ is a flat $\KU_*$-module, since in this case, the functor $-\otimes_{\KU_*} K_*(D)$ preserves exact sequences.}
 The classical way to generalize to  the case where $K_{*}(D)$ is not necessaily flat is to give up to   require that 
 \eqref{afdsfadsfaewad} is an isomorphism, but to require instead a short exact   sequence of the form \begin{equation}\label{adfsfasdfds}
0 \to K_{*}(A)\otimes_{\KU_{*}} K_{*}(D)\to K_{*}(A\otimes D)\to   \Tor^{\KU_{*}}(K_{*-1}(A),K_{*}(D))\to 0\ . 
\end{equation}
This is equivalent to replacing the tensor product over $\KU_*$ on the left hand side of \eqref{kuenneth-map} by the derived tensor product over $\KU_*$. Note that  the  graded ring $\KU_*$ has homological dimension 1, so indeed no higher Tor groups ought to appear.

 {Working with 
  $\KU$-modules  instead of $\KU_{*}$-modules gives the following}.
There is a short exact sequence calculating the homotopy groups of a tensor product of $\KU$-modules as follows
  \begin{equation}\label{adfsfasdfds1}
0 \to K_{*}(A)\otimes_{\KU_{*}} K_{*}(D)\to \pi_{*}(K(A)\otimes_{\KU} K(D))\to   \Tor^{\KU_{*}}(K_{*-1}(A),K_{*}(D))\to 0\ , 
\end{equation}
also known as the K\"unneth sequence.\footnote{This is a special case of a K\"unneth spectral sequence converging to $\pi_*(M\otimes_R N)$ for any ring spectrum $R$ and $R$-modules $M$ and $N$.}
So the K\"unneth problem can be phrased as whether for a given $A$ the map \begin{equation}\label{dafadfasdfas}
K(A)\otimes_{\KU} K(D)\to K(A\otimes D)
\end{equation}  induced by the lax symmetric monoidal structure of $K$
   is an equivalence for all $C^{*}$-algebras~$D$. If this is the case, we say that $A$ satisfies the K\"unneth formula.

 Since $ {K(-)}\otimes_{\KU}K(D)$ preserves sums, filtered colimits and fibre sequences,
 it is obvious that the class of algebras which   satisfy the K\"unneth formula
 contains $\C$ and is closed under sums, filtered colimits,  {retracts}, and satisfies the two-out-of three property for exact sequences. See also  \cref{qirjoqrfewqedew}.

 {In the above presentation of the K\"unneth problem, we have replaced the condition of existence and exactness of the sequence \eqref{adfsfasdfds} by the condition that the (a priori existing) map in \eqref{dafadfasdfas} is an equivalence}. In the present situation this might  {not} seem  {like a big} improvement, but again note that the exact sequence \eqref{adfsfasdfds1} {is the ``correct term''} only since $\KU_{*}$ has homological dimension one.  There are variants of the theory, e.g.\ real $K$-theory, equivariant $K$-theory, $K$-theory for $X$-algebras where $X$ is a topological space (or combinations of these situations)
where the analogue of $\KU_{*}$ does not have homological dimension one.
But the interpretation of the K\"unneth formula as the property that the map in 
\eqref{dafadfasdfas} is an equivalence makes sense in all these variants.

 \subsection{Swan's theorem}\label{wegojwoergferfweferfwr}
 In this example we give a formulation  \ulrich{of} Swan's theorem in the homotopy theoretic context of  spectrum-valued $K$-theory. For a $C^{*}$-algebra $A$ and a compact Hausdorff space $X$ we  consider the  $C^{*}$-algebra $C(X,A)$ of continuous $A$-valued functions on $X$. We then form the $\KU$-module $K(C( {X},A))$ in $\Mod(\KU)$.  We will see that  if  $X$ is homotopy finitely dominated, i.e.\ a retract, up to homotopy, of a finite CW complex, then all  information for the calculation of the $\KU$-module $K(C(X,A))$ is contained in the $\KU$-module  $K(A)$ and the space $\ell(X)$ in $\Spc$.

We first recall the classical formulation for $A=\C$. If $X$ is a  compact Hausdorff space, then $C(X)$ denotes the commutative $C^{*}$-algebra of continuous
functions on $X$.   Swan's theorem then asserts that
the Grothendieck group of projective modules over $C(X)$ is isomorphic to
the topological $K$-theory group  of $X$ defined in terms of vector bundles, or equivalently,   as the group of homotopy classes
$$K^{0}(X):= [X, \Z\times BU]$$ of maps from $X$ to the $h$-space $\Z\times BU$. 
In formulas, we have an isomorphism
\begin{equation}\label{adsfadsfadsfsafds}
K_{0}(C(X))  \cong  K^{0}(X) \ .
\end{equation} 
Note that this is actually an isomorphism  of rings where the product on the right-hand side  is induced by the tensor product of vector bundles.

We now use the notation from   \cref{eiorghbergerg}. Let $X$ be  any topological space.
 If $E$   is a
 commutative algebra in $\Mod(\KU)$, then so is $E^{X}$. 
 The product on $E^{X}$ involves the 
 multiplicative structure of $E$ and the diagonal of $X$.
{Since the spectrum valued $K$-theory functor is lax symmetric monoidal}, for any     unital and commutative $C^{*}$-algebra $A$ we get   commutative algebras $K(A)$ and $K(C(X,A))$
 in $\Mod(\KU)$.

In the proof of the \cref{weokjgopwegfrefwerf} below, we will use the following homotopy theoretic consideration about the relation between the $1$-category of compact Hausdorff spaces and the $\infty$-category $\Spc$.
Let   $\Top_{\mathrm{hCW}}$ denote the full sub-category of  $\Top$ consisting of spaces  of the homotopy type of CW-complexes. Its Dwyer-Kan localization  \begin{equation}\label{weqfqwefqfqewdewqd}\ell\colon \Top_{\mathrm{hCW}}\to \Spc
\end{equation} 
 at the homotopy equivalences\footnote{Note the difference to \eqref{sfdgsfdgfsgsfdg} where we localize the bigger category of all topological spaces at the larger class of weak equivalence.} is one of the  standard presentations
 of the {$\infty$}-category of spaces. 
 An object of $\Top_{\mathrm{hCW}}$ is  homotopy finitely dominated if it is a homotopy retract of a finite CW-complex.
 The restriction  $$\ell \colon\Top^{\mathrm{fd}}_{\mathrm{hCW}}\to \Spc^{\omega}$$
 of  \eqref{weqfqwefqfqewdewqd} to the full subcategory  of homotopy finitely dominated spaces presents the full subcategory 
  compact objects in $\Spc$ as a localization, too. Note that $\Spc^{\omega}$ is generated from the final object $*$ by finite colimits and retracts. Thus, if $\bM$ a finitely cocomplete and idempotent complete $\infty$-category, then evaluation at $*$ induces an equivalence
 \begin{equation}\label{qwfefewqdwedqwedqewdqewd} \Fun^{\Rex}(\Spc^{\omega},\bM)\stackrel{\simeq}{\to} \bM\ ,  \end{equation}  
  where the superscript  $\Rex$ stands for right exact, i.e.\ finite colimit preserving functors.

 Let $\CH^{\mathrm{fd}}_{\mathrm{hCW}}$ denote the full subcategory of compact Hausdorff spaces in  
  $\Top^{\mathrm{fd}}_{\mathrm{hCW}}$.  Let $X$ be in $\CH^{\mathrm{fd}}_{\mathrm{hCW}}$ 
  and $(Y,Z)$ be a pair of closed subspaces of $X$ such that $Y\to Z$ is a cofibration and $Y\cup Z=X$.
  Then \begin{equation}\label{qewfhq8wefhqwefqwedqewdqewd22} \xymatrix{ Y\cap Z \ar[r]\ar[d]&  Z \ar[d]\\ Y \ar[r]& X } 
\end{equation}
is a push-out square in $\CH^{\mathrm{fd}}_{\mathrm{hCW}}$ and $\ell$ sends this square to a push-out square in $\Spc^{\omega}$.
 We will  call $(Y,Z)$ a cofibrant decomposition of $X$.

  We consider two functors
    $\tilde F,\tilde F'\colon\CH^{\mathrm{fd}}_{\mathrm{hCW}}\to \bM$ to an idempotent complete finitely cocomplete target.
  \begin{lem}\label{wrgwtrgwrgfwerfwerf}
  If $\tilde F$ and $\tilde F'$ are homotopy invariant,  send the empty space to an initial object and the squares 
  \eqref{qewfhq8wefhqwefqwedqewdqewd22} for cofibrant decompositions to push-out squares,
  then an equivalence $\tilde F(*)\simeq \tilde F'(*)$ essentially uniquely extends to an equivalence of functors
  $\tilde F\simeq \tilde F'$.
  \end{lem}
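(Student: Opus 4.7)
\medskip

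\noindent\textbf{Proof plan.} The strategy is to factor both functors through the localization $\ell \colon \CH^{\mathrm{fd}}_{\mathrm{hCW}} \to \Spc^{\omega}$ and then invoke the universal property \eqref{qwfefewqdwedqwedqewdqewd}. The key inputs are homotopy invariance (to obtain the factorization) together with the pushout hypothesis (to force the factorization to be right exact).

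First I would observe that the restriction of $\ell$ in \eqref{weqfqwefqfqewdewqd} to the full subcategory $\CH^{\mathrm{fd}}_{\mathrm{hCW}}$ still presents $\Spc^{\omega}$ as a Dwyer--Kan localization: every homotopy finitely dominated space of the homotopy type of a CW-complex is homotopy equivalent to a finite CW-complex, which is already a compact Hausdorff space. Consequently, homotopy invariance of $\tilde F$ and $\tilde F'$ gives essentially unique factorizations
\[
\tilde F \simeq F \circ \ell, \qquad \tilde F' \simeq F' \circ \ell,
\]
with $F, F' \colon \Spc^{\omega} \to \bM$, and the proof reduces to showing that $F$ and $F'$ are right exact, i.e.\ belong to $\Fun^{\Rex}(\Spc^{\omega},\bM)$.

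The next step is to verify right exactness of $F$ (the case of $F'$ is identical). Preservation of the initial object is immediate from $\ell(\emptyset) \simeq \emptyset$ and the hypothesis $\tilde F(\emptyset)$ is initial. For preservation of pushouts, I would use a standard double mapping cylinder argument: given any span $B \leftarrow A \to C$ in $\Spc^{\omega}$, choose finite CW-models and form the compact Hausdorff space
\[
W := B \cup_{A \times \{0\}} (A \times [0,1]) \cup_{A \times \{1\}} C .
\]
Setting $Y := B \cup_A (A \times [0, \tfrac{1}{2}])$ and $Z := (A \times [\tfrac{1}{2},1]) \cup_A C$ gives a cofibrant decomposition of $W$ with $Y \cap Z \cong A$, and $Y \simeq B$, $Z \simeq C$ via deformation retractions. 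Applying $\tilde F$ and using homotopy invariance together with the cofibrant decomposition hypothesis shows that $F$ sends the pushout $B \cup_A C \simeq \ell(W)$ to a pushout in $\bM$. Since $\Spc^{\omega}$ is finitely cocomplete and generated from $*$ by finite colimits and retracts, preservation of the initial object, of pushouts, and (by idempotent completeness of $\bM$) of retracts implies that $F$ is right exact.

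Finally, by \eqref{qwfefewqdwedqwedqewdqewd}, evaluation at $*$ induces an equivalence $\Fun^{\Rex}(\Spc^{\omega},\bM) \stackrel{\simeq}{\to} \bM$. Hence the given equivalence $\tilde F(*) \simeq F(*) \simeq F'(*) \simeq \tilde F'(*)$ in $\bM$ lifts essentially uniquely to an equivalence $F \simeq F'$, and whiskering with $\ell$ yields the desired essentially unique extension $\tilde F \simeq \tilde F'$. The main technical obstacle will be the mapping cylinder step: one has to arrange a genuinely compact Hausdorff cofibrant decomposition modelling an arbitrary span in $\Spc^{\omega}$, and to check carefully that the induced square is indeed a pushout after applying $\ell$; the remainder is formal.
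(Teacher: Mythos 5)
Your overall strategy (descend along $\ell$, show the descended functors are right exact, then invoke the universal property \eqref{qwfefewqdwedqwedqewdqewd}) is the right one, but there is a genuine gap in the first step. You assert that every homotopy finitely dominated space of the homotopy type of a CW-complex is homotopy equivalent to a finite CW-complex. This is false: a finitely dominated space is homotopy equivalent to a finite complex if and only if its Wall finiteness obstruction in $\widetilde{K}_0(\Z[\pi_1])$ vanishes, and there are well-known examples where it does not. Concretely this means two things break. First, your claim that $\ell$ restricted to $\CH^{\mathrm{fd}}_{\mathrm{hCW}}$ presents all of $\Spc^{\omega}$ as a localization is unjustified (it is not even clear that every compact object of $\Spc$ admits a compact Hausdorff model, since the usual mapping-telescope model of a retract is non-compact), so you cannot conclude that $\tilde F$ factors through a functor defined on all of $\Spc^{\omega}$. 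Second, in your double mapping cylinder step you cannot ``choose finite CW-models'' for an arbitrary span in $\Spc^{\omega}$, for the same reason.

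The repair is exactly the detour the paper takes: restrict first to $\Spc^{\fin}$, the subcategory generated by $*$ under finite colimits, where finite CW-models do exist and where your cylinder argument (or a direct decomposition into subcomplexes) shows that the descended functors preserve pushouts; then observe that $\Spc^{\fin}\subseteq\Spc^{\omega,CH}\subseteq\Spc^{\omega}$ all have the same idempotent completion, so that for idempotent complete $\bM$ restriction induces the equivalences \eqref{verewvewrwfrfefw} of functor categories, and the right exact functor on $\Spc^{\fin}$ extends essentially uniquely to $\Spc^{\omega}$. Idempotent completeness of $\bM$ is thus not used to show that $F$ ``preserves retracts'' (every functor does), but to extend $F$ from $\Spc^{\fin}$ to its idempotent completion $\Spc^{\omega}$ in the first place. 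With that substitution your argument goes through; the rest of your outline, including the reduction to \eqref{qwfefewqdwedqwedqewdqewd} and the whiskering at the end, matches the paper's proof.
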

\begin{proof}
Let $\Sp^{\fin}$ be the full subcategory of finite spaces generated by $*$ under finite colimits. The inclusion $\Spc^{\fin}\to \Spc^{\omega}$  presents its target as an idempotent completion. 
We let $\CH^{\mathrm{fin}}_{\mathrm{hCW}}$ denote the full subcategory of
$\CH^{\mathrm{fd}}_{\mathrm{hCW}}$ of compact Hausdorff spaces which are homotopy equivalent to
finite CW-complexes. The restriction of \eqref{weqfqwefqfqewdewqd} gives a functor
$\ell:\CH^{\mathrm{fin}}_{\mathrm{hCW}}\to \Spc^{\fin}$ which again presents its target as the Dwyer-Kan localization at the homotopy equivalences.
We let furthermore $\Spc^{\omega,CH}$  denote the full subcategory of $\Spc^{\omega}$ given by the essential image $\CH^{\mathrm{fd}}_{\mathrm{hCW}}$ under
the functor $\ell$ from \eqref{weqfqwefqfqewdewqd}. The functor
$\ell:\CH^{\mathrm{fd}}_{\mathrm{hCW}}\to \Spc^{\omega,CH}$ is still a   localization
at the homotopy equivalences. Since every finite CW-complex is a compact Hausdorff space we have inclusions
$$\Spc^{\fin}\subseteq \Spc^{\omega,CH}\subseteq \Spc^{\omega}$$ which induce  equivalences of idempotent completions. In particular, for any idempotent complete (and finitely cocomplete)    $\infty$-category $\bM$ the restrictions along these inclusions induce equivalences  
\begin{equation}\label{verewvewrwfrfefw} \Fun^{(\Rex)}( \Spc^{\omega},\bM)\stackrel{\simeq}{\to} \Fun^{(\Rex)}( \Spc^{\omega,CH},\bM) \stackrel{\simeq}{\to}\Fun^{(\Rex)}( \Spc^{\fin},\bM) \end{equation} 
of categories of functors (or finite colimit preserving functors, respectively).

Since $\tilde F$ and $\tilde F'$ are homotopy invariant they
descend through $\ell$ to functors $$F,F': \Spc^{\omega,CH}\to \bM\ .$$
Since every push-out square in $\Spc^{\fin}$ is the image under $\ell$ of  a cofibrant decomposition of a finite
CW-complex into subcomplexes we can conclude that the restrictions of $F$ and $F'$ to $\Spc^{\fin}$ preserve finite colimits.  In view of \eqref{verewvewrwfrfefw} we can further conclude that $F,F'$   have essentially unique
extensions   to finite colimit preserving functors   
$ \hat F,\hat F':\Spc^{\omega}\to \bM$.  In view of \eqref{qwfefewqdwedqwedqewdqewd} the equivalence
$$  F(*)\simeq \tilde F(*)  \simeq \tilde   F'(*)\simeq   F'(*)$$ essentially uniquely extends to
an equivalence $\hat F\simeq \hat F'$. 
Using  \eqref{verewvewrwfrfefw} again we get an equivalence $\tilde F\simeq \tilde F'$ and therefore the desired  equivalence
$F\simeq F'$ extending the given equivalence of values on the point.
\end{proof}

\begin{prop}\label{weokjgopwegfrefwerf}
If the compact Hausdorff space $X$ is  {homotopy finitely dominated}, then we have a  natural equivalence    
\begin{equation}\label{adsvdscdscacdscascadsc}
K(C(X,A))\simeq K(A)^{X}\ .
\end{equation}
in $\Mod(\KU)$. If $A$ is unital and commutative, then  \eqref{adsvdscdscacdscascadsc} refines to an equivalence of commutative algebras in $\Mod(\KU)$.
\end{prop}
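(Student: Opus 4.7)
My plan is to invoke Lemma~\ref{wrgwtrgwrgfwerfwerf} applied to the pair of functors
\[
\tilde F,\tilde F' \colon \CH^{\mathrm{fd}}_{\mathrm{hCW}} \to \Mod(\KU)^{\op}, \qquad \tilde F(X) := K(C(X,A)), \qquad \tilde F'(X) := K(A)^{\ell(X)}.
\]
Both assignments are naturally contravariant as functors into $\Mod(\KU)$, hence honestly covariant with target $\Mod(\KU)^{\op}$; this target is finitely cocomplete (since $\Mod(\KU)$ is stable, hence finitely complete) and idempotent complete. On the one-point space both functors take the canonical common value $K(A)$.

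I verify the three hypotheses of Lemma~\ref{wrgwtrgwrgfwerfwerf}. For \emph{homotopy invariance}: $\tilde F'$ factors through $\ell$, and for $\tilde F$ a continuous homotopy between maps $X\to Y$ induces via pullback a path in $\Hom_{\nCalg}(C(Y,A), C(X,A))$ which is continuous in the point-norm topology, hence is collapsed by $K$ via Proposition~\ref{wekojgwerferfwe}. For the \emph{empty space}: $\tilde F(\emptyset) = K(0) = 0$ and $\tilde F'(\emptyset) = K(A)^{\emptyset} = 0$, which is simultaneously initial and terminal in the stable category $\Mod(\KU)$ and therefore initial in $\Mod(\KU)^{\op}$. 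For the \emph{cofibrant decomposition squares}: the cotensor $K(A)^{(-)}\colon \Spc^{\op}\to\Mod(\KU)$ is a right adjoint, so sends pushouts in $\Spc$ to pullbacks in $\Mod(\KU)$, equivalently to pushouts by stability, hence to pushouts in $\Mod(\KU)^{\op}$; this handles $\tilde F'$. For $\tilde F$, using that $C(X)$ is nuclear and the equality $X\setminus Y = Z\setminus(Y\cap Z)$, one obtains a pair of short exact sequences sharing a common ideal $J\cong C_0(X\setminus Y,A)$,
\[
0 \to J \to C(X,A) \to C(Y,A) \to 0, \qquad 0 \to J \to C(Z,A) \to C(Y\cap Z, A) \to 0,
\]
where surjectivity of the right-hand maps is Tietze extension for $A$-valued continuous functions. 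Applying $K$ yields a morphism of fibre sequences which is the identity on the fibre $K(J)$, forcing the induced square on the cofibres to be a pullback in $\Mod(\KU)$, hence a pushout in $\Mod(\KU)^{\op}$.

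Lemma~\ref{wrgwtrgwrgfwerfwerf} then extends the tautological identification on the point to an essentially unique natural equivalence $\tilde F \simeq \tilde F'$, producing \eqref{adsvdscdscacdscascadsc}. For the commutative algebra refinement, when $A$ is unital and commutative, $\tilde F$ lifts to $\CAlg(\Mod(\KU))^{\op}$ via the lax symmetric monoidal refinement of $K$ (Corollary~\ref{rwefefefw}), while $\tilde F'$ lifts because the cotensor is a limit and limits of commutative algebras are commutative. Since $\CAlg(\Mod(\KU))^{\op}$ is again finitely cocomplete and idempotent complete, and all hypothesis checks descend to the underlying $\Mod(\KU)$-valued situation, Lemma~\ref{wrgwtrgwrgfwerfwerf} applies again to upgrade the equivalence. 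The main obstacle I anticipate is the Mayer--Vietoris step: producing the two short exact sequences coherently, with canonically identified common left column $J$, so that the induced map of fibre sequences is naturally the identity on $K(J)$ and the resulting square in $\Mod(\KU)$ is genuinely a pullback.
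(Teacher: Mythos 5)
Your proposal is correct and follows essentially the same route as the paper: both apply Lemma~\ref{wrgwtrgwrgfwerfwerf} to the functors $X\mapsto K(C(X,A))$ and $X\mapsto K(A)^{X}$ into $\Mod(\KU)^{\op}$, verify excision for the cotensor via preservation of pushouts and for $K(C(-,A))$ via the two restriction short exact sequences with canonically isomorphic kernels $C_0(X\setminus Y,A)\cong C_0(Z\setminus(Y\cap Z),A)$, and upgrade to $\CAlg(\Mod(\KU))^{\op}$ in the commutative case. The coherence worry you flag at the end is not an issue, since the map of fibre sequences is induced by the actual restriction homomorphism $C(X,A)\to C(Z,A)$, which restricts to an isomorphism of the kernels.
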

\begin{proof}
We wish to use \cref{wrgwtrgwrgfwerfwerf} for the two functors $ X\mapsto K(C(X,A))$ and $X\mapsto K(A)^{X}$ from $\CH^{\mathrm{fd}}_{\mathrm{hCW}}$ to $\Mod(\KU)^{\op}$. 
   These functors   are both homotopy invariant and send the empty space to zero.  Furthermore,    $\Mod(\KU)^{\op}$ is cocomplete and hence also idempotent complete.

   For a  $\KU$-module $C$  the   power functor $\Spc\ni  {B}\to C^{ {B}}\in \Mod_{\Sp}(\KU)^{\op}$    preserves  push-outs. 
  Since the image of \eqref{qewfhq8wefhqwefqwedqewdqewd22} under $\ell$
 is a push-out for every cofibrant   decomposition $(Y,Z)$ of $X$ in $\CH^{\mathrm{fd}}_{\mathrm{hCW}}$ the functor 
 $ \K(A)^{-}$  sends cofibrant decompositions to push-outs.      We now argue, that $ \K(C(-,A))$ has this property, too.
 
 We let $(Y,Z)$ be a closed decomposition of  $X$ in $\CH^{\mathrm{fd}}_{\mathrm{hCW}}$. 
   The commutative square \eqref{qewfhq8wefhqwefqwedqewdqewd22}
 then  induces a   commutative  square 
      \begin{equation}\label{adfasdfadfadsf}
\xymatrix{K(C(X,A))\ar[r]\ar[d]&K(C(Z,A))\ar[d]\\ K(C(Y,A))\ar[r]&K(C(Y\cap Z,A))}
\end{equation}   in $\Mod(\KU)$. We must show that
  the latter   is cartesian.        It suffices to show the induced map of fibres of the horizontal maps is an equivalence. To this end 
 we use that $\K$ sends the exact sequences \begin{equation}\label{fdvdvfsvdfv}
0\to \ker(C(X,A)\to C(Y,A))\to C(X,A)\to C(Y,A)\to 0
\end{equation}
   and \begin{equation}\label{fdvdvfsvdfv1}
0\to  \ker(C(Y,A)\to C(Y\cap Z,A))\to C(Y,A) \to C(Y\cap Z,A)\to 0
\end{equation}
  to fibre sequences, 
 and the isomorphism of $C^{*}$-algebras $$\ker(C(X,A)\to C(Y,A))\cong C_{0}(X\setminus Y,A)\cong C_{0}(Z\setminus (Y\cap Z),A)\cong \ker(C(Z,A)\to C(Y\cap Z,A))\ . $$
  If $A$ is unital and commutative, the same argument applies when {changing the codomain to the category $\CAlg(\Mod(\KU))^{\op}$.}
\end{proof}

In the case of $A=\C$,  applying $\pi_{*}$ and using that $\Omega^{\infty}\KU\simeq \Z\times \mathrm{BU}$ as $h$-spaces we can deduce \eqref{adsfadsfadsfsafds} from \eqref{adsvdscdscacdscascadsc} provided $X$ is  in $\CH^{\mathrm{fd}}_{\mathrm{hCW}}$. 
     
\begin{ex} 
In the formulation  of   \cref{weokjgopwegfrefwerf},  the restriction to homotopy finitely dominated spaces   is necessary. 
Consider for example the compact subspace
$$X:=\{1/n\mid n\in \nat\}\cup \{0\}$$ of the unit interval in $\R$. 
 Then $\ell(X)\simeq \coprod_{\nat} \ell(*)$ so that $K(A)^{X}\simeq \prod_{\nat} K(A)$.
One the other hand one can  {show} that $K_{*}(C(X,A))$ is the subgroup of
$\prod_{\nat} K_{*}(A)$ of eventually constant sequences.
Thus in this case $K(C(X,A))\not\simeq K(A)^{X}$. \hB
\end{ex}

We consider the right-hand side of \eqref{adsvdscdscacdscascadsc} as a homotopy theoretical formula for $K(C(X,A))$. This formula can be employed to  calculate the  $K$-theory groups   $K_{*}(C(X,A))$   using a Atiyah-Hirzebruch spectral sequence  \eqref{casdcsdaccsca} with second term
$$E_{2}^{p,q}:=H^{p}(X, K_{-q}(A))\ .$$

Using the $\KU$-module structure on $K(A)$ we can rewrite the right-hand side of  \eqref{adsvdscdscacdscascadsc} in the form
$$K(A)^{X}\simeq   \map_{\KU}(\KU\otimes X,K(A))\ $$
 {where $\map_\KU(-,-)$ is short for the $\KU$-module $\map_{\Mod(\KU)}(-,-)$}.
Sometimes the $\KU$-module $ \KU\otimes X$ has a very simple structure,  {as in the following example.}
\begin{ex}It is known\footnote{For instance as a consequence of the complex orientability of $\KU$.} that
$$\KU\otimes \C\P^{n}\simeq \bigoplus_{i=0}^{n} \Sigma^{2n}\KU\ .$$
This immediately implies
$$\K(C(\C\P^{n},A))\simeq \bigoplus_{i=0}^{n}\Sigma^{-2n}\K(A)$$
in $\Mod(\KU)$. \hB
 \end{ex}
If $X$ is in $\CH^{\mathrm{fd}}_{\mathrm{hCW}}$, then one can also rewrite the right-hand side of  \eqref{adsvdscdscacdscascadsc}
as\footnote{Both sides are compatible with colimits in $X$, and they agree for $X=\ast$.}  $$K(A)^{X}\simeq \KU^{X}\otimes_{\KU} K(A)\ .$$ From this formula we see that all information in order to calculate $\K(C(X,A))$ is contained in the $\KU$-module
$\K(C(X))\simeq \KU^{X}$, and that we have the exact sequence
$$0\to K^{-*}(X)\otimes_{\KU_{*}} K_{*}(A) \to K_{*}(C(X,A))\to \Tor^{\KU_{*}}_1(K^{-*{+1}}(X),K_{*}(A))\to 0$$
known from   \cref{rgjhiergewrg9} since $C(X)$ satisfies the K\"unneth formula.  
Generalizations of this example will be discussed in    \cref{adsfqdewdfew} and    \cref{wkogpwgerfwerfwf}.

\section{Higher structures on $KK$-theory}
\subsection{Classical  $KK$-theory}\label{wekojgwerferfwe111}

The classical $KK$-groups $KK_{0}(A,B)$ of two separable $C^{*}$-algebras are defined as groups of equivalence classes of $(A,B)$-Kasparov modules 
\cite{kasparovinvent} (see also the textbook  \cite{blackadar}) or quasi-homomorphisms $A\rightsquigarrow B$ \cite{MR899916}. They are bivariantly functorial in $A$ and $B$, that is contravariant in $A$ and covariant in $B$. Furthermore, one has an associative bilinear composition product, the Kasparov product
$$KK_{0}(B,C)\otimes KK_{0}(A,B)\to KK_{0}(A,C)\ .$$
Following \cite{Higson87} (see also  \cite[Sec.22.1]{blackadar}) it is natural to capture these constructions in terms of a functor 
\begin{equation}\label{vsvscsacsdca}
\kk_{\sepa}^0\colon \nCalg_{\sepa}\to \KK_{\sepa}^0
\end{equation} 
 from separable $C^{*}$-algebras to an additive category $\KK_{\sepa}^0$ such that
 $$KK_{0}(A,B)= \Hom_{\KK^{0}_{\sepa}}(\kk^{0}_{\sepa}(A),\kk^{0}_{\sepa}(B))\ .$$ 
Specializing at $A=\C$, there is in addition a canonical isomorphism of functors
\begin{equation}\label{fdsvsoihwjiovwefvwf}KK_{0}(\C,-) \cong K_0(-):\nCalg_\sepa \to \Ab\ ,
\end{equation} 
see \cite[Prop.\ 17.5.5]{blackadar}.

A  functor $F$ from  $C^{*}$-algebras  to an additive category can have the following properties:
\begin{enumerate}
	\item  homotopy invariance: If $f_{0}$ and $f_{1}$ are homotopic maps between $C^{*}$-algebras, then $F(f_{0})$ and $F(f_{1})$ are  equal.
	\item $\mathbb{K}$-stability:  The upper left corner conclusion
$A\to \mathbb{K}\otimes A$ induces an isomorphism $F(A)\to F( \mathbb{K}\otimes A)$    for any $C^{*}$-algebra $A$.
\item  split-exactness (for functors with an additive target): For every
exact sequence \begin{equation}\label{ergwergerferffw}
 0\to A\to B\xrightarrow{f} C\to 0
\end{equation}   of $C^{*}$-algebras
such that $f$ admits a right-inverse (we say that the sequence is split-exact), the sequence  $$0\to F(A)\to F(B)\stackrel{F(f)}{\to} F(C)\to 0$$
is split-exact.
\end{enumerate}
These properties are also well-defined for functors just defined on $ \nCalg_{\sepa}$.
The functor $\kk_\sepa^0$ of \eqref{vsvscsacsdca} can essentially uniquely be characterized by the following universal property:
\begin{theorem}[{\cite{MR1068250}}]\label{weriojgweorgewrgfwerf9}
The functor $\kk_{\sepa}^0\colon \nCalg_{\sepa} \to \KK_{\sepa}^0$ is   
initial among functors from
$\nCalg_{\sepa}$ to additive categories which are homotopy invariant, $\mathbb{K}$-stable and
split-exact. 
\end{theorem}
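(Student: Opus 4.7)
The plan is to verify the two directions of the universal property: first that $\kk_\sepa^0$ itself is homotopy invariant, $\mathbb{K}$-stable, and split-exact; then that given any functor $F\colon \nCalg_\sepa \to \mathcal{A}$ to an additive category with these three properties, there exists an essentially unique extension $\bar F\colon \KK_\sepa^0 \to \mathcal{A}$ with $\bar F\circ \kk_\sepa^0 \simeq F$.

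For the first direction, homotopy invariance and $\mathbb{K}$-stability follow directly from the construction of $KK$: homotopic $*$-homomorphisms define the same Kasparov class, and the upper corner embedding $A\to \mathbb{K}\otimes A$ is a $KK$-equivalence (witnessed by the Kasparov product with the flip). Split-exactness amounts to the statement that a split-exact sequence $0\to A\to B\to C\to 0$ induces a direct sum decomposition $\kk_\sepa^0(B)\cong \kk_\sepa^0(A)\oplus \kk_\sepa^0(C)$ in $\KK_\sepa^0$; this follows from Kasparov's six-term exact sequence applied in either variable, together with the existence of a section at the level of morphisms.

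For the universal property, the central tool is Cuntz's quasi-homomorphism picture: every class in $KK_0(A,B)$ is represented by a $*$-homomorphism $\phi\colon qA \to \mathbb{K}\otimes B$, where $qA := \ker(\nabla\colon A*A\to A)$ and $\nabla$ is the fold map on the free product. The sequence $0\to qA \to A*A \to A \to 0$ is split-exact (with two distinguished splittings $\iota_0,\iota_1\colon A\to A*A$ corresponding to the two free factor inclusions). Applying $F$ and using split-exactness gives a canonical decomposition $F(A*A)\cong F(qA)\oplus F(A)$. The difference $F(\iota_0)-F(\iota_1)\colon F(A)\to F(qA)$ lands in the $F(qA)$-summand, and I would define, for a representative $\phi\colon qA\to \mathbb{K}\otimes B$ of a $KK$-class,
\[
\bar F([\phi])\colon F(A)\xrightarrow{F(\iota_0)-F(\iota_1)} F(qA)\xrightarrow{F(\phi)} F(\mathbb{K}\otimes B)\xrightarrow{\sim} F(B),
\]
where the last arrow is the inverse of the $\mathbb{K}$-stability isomorphism.

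The hard part will be the verification that this assignment descends to $KK_0(A,B)$ and that it is compatible with the Kasparov product. Well-definedness under homotopy follows from homotopy invariance of $F$, and under stabilization from $\mathbb{K}$-stability; but these already require some care because the equivalence relation on Kasparov classes identifies operator-theoretic compact perturbations, and one must reduce these to algebraic homotopies accessible to $F$. Compatibility with the Kasparov product is the deepest point: one uses Cuntz's realization of the product as a composition $qA \to \mathbb{K}\otimes qB \to \mathbb{K}\otimes \mathbb{K}\otimes C\cong \mathbb{K}\otimes C$ (after lifting a representative along a certain surjection), and then tracks how the two splittings $F(\iota_0),F(\iota_1)$ interact via $F$ applied to this composition; the key lemma is that on $F$-level the difference $F(\iota_0)-F(\iota_1)$ is multiplicative in the appropriate sense. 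Functoriality and essential uniqueness of $\bar F$ then follow, since the morphisms of $\KK_\sepa^0$ are generated by the images of quasi-homomorphisms modulo the relations we have just checked.
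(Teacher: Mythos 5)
The paper does not prove this theorem: it is quoted from Higson's work \cite{MR1068250}, and your sketch reproduces exactly the strategy of that reference (and of Cuntz's earlier quasi-homomorphism calculus on which it builds) --- represent $KK_{0}(A,B)$ by homomorphisms $qA\to \mathbb{K}\otimes B$, use split-exactness of $F$ on $0\to qA\to A*A\to A\to 0$ to form $F(\iota_0)-F(\iota_1)\colon F(A)\to F(qA)$, and compose. Your outline is correct as a plan; the two points you defer --- that homotopy of homomorphisms $qA\to\mathbb{K}\otimes B$ is the only remaining relation in the Cuntz picture, and that the Kasparov product is realized by composition after stabilizing the $q$-construction --- are precisely where the substance of the cited proof lies, so nothing is structurally missing, but those facts carry essentially all of the analytic weight.
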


\begin{rem}
In addition, Higson showed that any $\mathbb{K}$-stable and split-exact functor to an additive category is already homotopy invariant \cite{higsa}.
\end{rem}

Any exact sequence \eqref{ergwergerferffw} gives rise to a diagram
\begin{equation}\label{adscadscasdffccd}
\xymatrix{ &A\ar[r]\ar[d]^{\iota}&B\ar[r]^{f}\ar[d]^{h}&C\ar@{=}[d] & \\ S(C)\ar[r]&
C(f)\ar[r]&Z(f)\ar[r]^{\tilde f}&C&}\ .
\end{equation}
where $S(C):=C_{0}(\R,C)$  and the lower sequence is a segment of the Puppe sequence associated to $f$.

\begin{theorem}[{\cite{MR2193334}}]   \label{goeirererfwe} The category $\KK_{\sepa}^0$  has a triangulated structure with shift operator $S(-)$\footnote{Note that $S(-)$ rather corresponds to the inverse of the shift in the usual conventions.} and whose  {distinguished} triangles are generated by the images of the Puppe sequences for maps $f$  under $\kk_{\sepa}^0$.
\end{theorem}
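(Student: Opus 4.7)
The plan is to verify the Verdier axioms (TR1)--(TR4) on the additive category $\KK_\sepa^0$ with shift functor $S(-)=C_0(\R,-)$, defining the distinguished triangles to be those isomorphic in $\KK^0_\sepa$ to (images of) Puppe sequences
\[
S(C)\to C(f)\to B\to C
\]
associated to morphisms $f\colon B\to C$ of separable $C^*$-algebras, where we silently identify $Z(f)$ with $B$ via the homotopy equivalence $h$ of~\eqref{adscadscasdffccd}, which becomes an isomorphism in $\KK_\sepa^0$ by homotopy invariance. Throughout, the key tool is the universal property of $\kk_\sepa^0$ from \cref{weriojgweorgewrgfwerf9}: split-exactness, homotopy invariance, and $\mathbb{K}$-stability will be invoked repeatedly.

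The first step is to check that $S$ is an auto-equivalence of $\KK_\sepa^0$. This is an immediate consequence of Bott periodicity: it provides a natural isomorphism $S^2(A)\simeq A$ in $\KK_\sepa^0$, exhibiting $S$ as its own quasi-inverse. Given this, TR1 is essentially the definition, since every morphism of separable $C^*$-algebras embeds into a Puppe sequence. For TR2 (rotation), the strategy is to iterate the mapping cone construction: the mapping cone of the canonical map $C(f)\to B$ is, via the standard semi-split exact sequence, naturally isomorphic in $\KK_\sepa^0$ to $S(C)$, and repeating this procedure one more time, together with the identification $S^2\simeq\id$, closes the rotation cycle.

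For TR3 (completion of morphisms of triangles), given compatible maps on the first two vertices of two Puppe triangles, I would lift the situation to the level of semi-split short exact sequences $0\to C(f)\to Z(f)\to C\to 0$ and exploit split-exactness of $\kk_\sepa^0$: any two candidate fillers differ by a map that factors through the split piece and thus vanishes in $\KK_\sepa^0$, producing an essentially unique morphism of triangles. This portion of the argument is formal once split-exactness is in place.

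The principal obstacle is the octahedral axiom (TR4). For a composable pair $A\xrightarrow{f}B\xrightarrow{g}C$, one must produce a distinguished triangle connecting $C(f)$, $C(g\circ f)$, and $C(g)$ and compatible with the three individual Puppe sequences. Following Meyer--Nest, the idea is to arrange a $3\times 3$ diagram of semi-split exact sequences of mapping cones and mapping cylinders and then to feed it through $\kk_\sepa^0$, using split-exactness to extract all the required morphisms and to verify the commutativities. This step is genuinely intricate and is where Higson's universal characterization pays off most directly. A considerably cleaner route, to be pursued later in this survey following~\cite{LN}, is to first construct a stable $\infty$-categorical enhancement $\KK_\sepa$ of $\KK_\sepa^0$ and to inherit the triangulated structure from stability, bypassing the ad hoc verification of TR4 altogether.
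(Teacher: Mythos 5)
The paper does not actually prove this statement; it is quoted from Meyer--Nest \cite{MR2193334}, so your sketch can only be measured against the standard argument there. Two of your steps have genuine gaps. First, in TR1 (and implicitly throughout) you treat a morphism of $\KK_{\sepa}^0$ as if it were a $*$-homomorphism: ``every morphism of separable $C^*$-algebras embeds into a Puppe sequence'' is true, but TR1 demands this for every morphism of $\KK_{\sepa}^0$, i.e.\ for an arbitrary class in $KK_0(B,C)$, which has no mapping cone. One must first show that every such class is, up to isomorphism in $\KK_{\sepa}^0$, induced by an honest $*$-homomorphism --- e.g.\ via Cuntz's $qA$-picture, in which $KK_0(B,C)\cong [qB,\mathbb{K}\otimes C]$ and the canonical maps $qB\to B$ and $C\to\mathbb{K}\otimes C$ are $KK$-equivalences. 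This representability input is the technical heart of the theorem and is nowhere in your sketch; the same omission undermines your TR3 strategy of ``lifting the situation to the level of semi-split short exact sequences,'' since the given compatible maps on the first two vertices are again only $KK$-classes and cannot be fed into the mapping-cone construction.

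Second, your TR3 argument asserts that any two candidate fillers ``differ by a map that factors through the split piece and thus vanishes,'' i.e.\ that the filler is essentially unique. This is false: TR3 asserts only existence, fillers in a triangulated category are in general not unique, and split-exactness does not force the difference of two fillers to vanish. The standard route is instead to establish the long exact sequences for $KK(-,D)$ and $KK(D,-)$ associated to a mapping-cone triangle (this is where the semi-exactness of \cref{rthojirthrtegetrg} enters) and then to produce a filler by a diagram chase. Your treatment of TR2, the reduction of TR4 to a $3\times 3$ diagram of mapping cones, and the remark that the cleanest modern proof inherits the triangulated structure from the stable $\infty$-category $\KK_{\sepa}$ of \cref{werojigwergwerfw} are all consistent with how the literature proceeds.
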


Split-exactness of $\kk_{\sepa}^0$  can now  be reformulated as the assertion that the map 
\[\kk_{\sepa}^0(\iota) \colon \kk_{\sepa}^0(A) \to \kk_{\sepa}^0(C(f)) \]
is an isomorphism for every split-exact exact sequence 
 \eqref{ergwergerferffw}, where $\iota$ is as in \eqref{adscadscasdffccd}. 
 
 \begin{definition}
 The sequence \eqref{ergwergerferffw} is called 
 semi-{split} if  $f$ admits a completely positive contractive (cpc) right-inverse.
 \end{definition}
 We note that a cpc map is not required to be a $*$-homomorphism and conversely, $*$-homomorphisms are cpc. Therefore, every split-exact sequence is semi-split, but the converse is not true. For instance, the surjections $\tilde{f} \colon Z(f) \to C$ as in \eqref{adscadscasdffccd} always admit cpc right inverses.
 The following is a deep result in $KK$-theory.
 
 \begin{theorem}[{\cite{zbMATH03727013}, \cite{zbMATH03973625}}]\label{rthojirthrtegetrg}
 The functor $\kk_{\sepa}^0$ is semi-exact in the sense that $\kk_{\sepa}^0(\iota)$ is an isomorphism for every semi-split exact sequence \eqref{ergwergerferffw}.
 \end{theorem}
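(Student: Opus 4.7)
The plan is to leverage the universal property from Theorem \ref{weriojgweorgewrgfwerf9} and the fact that $\kk^0_\sepa$ is already split-exact, then to convert a semi-split exact sequence into a genuinely split one after suitable stabilization. Concretely, given a semi-split extension $0 \to A \to B \xrightarrow{f} C \to 0$ with cpc section $s\colon C \to B$, the goal is to show $\kk^0_\sepa(\iota)\colon \kk^0_\sepa(A) \to \kk^0_\sepa(C(f))$ is invertible, where $\iota$ is the canonical map from \eqref{adscadscasdffccd}. Since the lower row of \eqref{adscadscasdffccd} is always semi-split (the map $Z(f) \to C$ admits the obvious $*$-homomorphism section $c \mapsto \mathrm{const}_c$), and since the composition $A \hookrightarrow B \to Z(f)$ is a homotopy equivalence, it suffices to prove the assertion for the extension $0 \to C(f) \to Z(f) \to C \to 0$, which is split. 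Thus the problem reduces to transforming a semi-split extension into a split one in a $\kk^0_\sepa$-preserving way.

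The main technical input is Kasparov's Stinespring dilation theorem: any cpc map $s\colon C \to B$ dilates, after tensoring with $\mathbb{K}$, to a genuine $*$-homomorphism $\sigma\colon C \to M(\mathbb{K} \otimes B)$ together with a partial isometry implementing $s$ via compression. Concretely, one should associate to the pair $(f,s)$ a $*$-homomorphism $\varphi\colon qC \to \mathbb{K} \otimes A$, where $qC$ denotes Cuntz's q-construction (the kernel of the fold map $C \ast C \to C$), which represents the same morphism in $\kk_\sepa^0$ as the boundary of the Puppe sequence for $f$. The formal framework is that for a semi-split extension, the two $*$-homomorphisms $\mathrm{id}_C \sqcup s\colon C \ast C \to B$ and $\mathrm{id}_C \sqcup (\iota_A \circ 0)\colon C \ast C \to B$ differ by a quasi-homomorphism $C \rightsquigarrow A$, and Stinespring dilation plus $\mathbb{K}$-stability turns this quasi-homomorphism into an honest $*$-homomorphism $qC \to \mathbb{K} \otimes A$.

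From here the argument proceeds by comparing the mapping cone sequence for $f$ with a universal split-exact sequence built from $qC$. One invokes Cuntz's theorem that the natural morphism $qC \to C$ is a $\kk_\sepa$-equivalence after stabilization (this is where homotopy invariance and $\mathbb{K}$-stability are both essential), and then uses the fact that the map $\iota$ factors, up to the equivalences produced by the universal property, through the split-exact sequence obtained from the Stinespring dilation. Applying $\kk_\sepa^0$, split-exactness of the dilated sequence combined with the $\mathbb{K}$-stability identification $\kk_\sepa^0(A) \xrightarrow{\sim} \kk_\sepa^0(\mathbb{K} \otimes A)$ produces the desired inverse to $\kk_\sepa^0(\iota)$.

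The hardest step, and the only genuinely analytic input, is the construction of the Stinespring dilation of $s$ in a form compatible with the algebraic machinery: one needs the dilated $*$-homomorphism to land in a stabilization of $B$ that fits naturally with the $qC$ construction, so that the induced morphism in $\kk_\sepa^0$ can be identified with the mapping cone boundary. Once this analytic lemma (essentially Kasparov's technical theorem for cpc sections) is in hand, the remainder of the argument is a formal manipulation inside $\KK_\sepa^0$ using only the three properties homotopy invariance, $\mathbb{K}$-stability, and split-exactness, which Higson's characterization guarantees to suffice.
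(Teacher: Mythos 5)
The paper does not actually prove \cref{rthojirthrtegetrg}: it is quoted as a deep theorem of Kasparov and Cuntz--Skandalis, so there is no in-paper argument to compare against, and your sketch is an attempt to reconstruct the literature proof. The circle of ideas in your second and third paragraphs --- the Stinespring/Kasparov dilation of the cpc section, quasi-homomorphisms, Cuntz's $q$-construction and the $KK$-equivalence $qC\to C$ --- is indeed the right one and matches the cited sources.

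However, your opening reduction is wrong, and the decisive step is left as a black box. The mapping-cylinder sequence $0\to C(f)\to Z(f)\xrightarrow{\tilde f} C\to 0$ is \emph{not} split in general: the paper offers it, immediately before the theorem, as the standard example of a semi-split but non-split extension. The natural section (scaling a path down to $0$) is completely positive and contractive but not multiplicative, and the literally constant path $c\mapsto \mathrm{const}_c$ does not even land in $Z(f)$ without a lift of $c$ along $f$; a genuine $*$-homomorphism section of $\tilde f$ would require one of $f$. Likewise, the composition $A\hookrightarrow B\to Z(f)$ is not a homotopy equivalence (the map $h\colon B\to Z(f)$ is); the entire content of the theorem is that $\iota\colon A\to C(f)$ becomes invertible in $\KK_{\sepa}^0$, and one cannot transfer invertibility from the cylinder sequence to the original extension without already knowing this. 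Finally, the core of the argument --- producing from the dilation of $s$ an explicit class in $KK_0(C(f),A)$ (equivalently a $*$-homomorphism $q(C(f))\to\mathbb{K}\otimes A$) and verifying that both composites with $\kk_{\sepa}^0(\iota)$ are identities --- is exactly where all of the analytic work of Kasparov and Cuntz--Skandalis lies, and your sketch names it (``essentially Kasparov's technical theorem for cpc sections'') without carrying it out. As written, the proposal is a plausible roadmap to the known proof rather than a proof.
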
  In other words, $\kk_\sepa^0$ sends semi-split exact sequences to distinguished triangles.  

\begin{rem}\label{egojwpegerferfw}
It is known that the functor $\kk_\sepa^0$ is not exact, i.e., it does not send all exact sequences of $C^*$-algebras to distinguished triangles. 
Indeed, as shown in  Skandalis \cite{zbMATH00027188}, for lattices $\Gamma$ in $\mathrm{Sp(n,1)}$, the short exact sequence associated to the canonical surjection $C^*(\Gamma) \to C^*_r(\Gamma)$ is not sent to a distinguished triangle by $\kk_\sepa^0$.  
 \end{rem}

\begin{theorem}[{\cite{MR2193334}}]  \label{weklporgwergfwerfw} The  category $\KK_{\sepa}^0$  is canonically tensor triangulated   such that
$\kk_{\sepa}^0$ refines to a symmetric monoidal functor.
 \end{theorem}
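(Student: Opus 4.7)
My plan is to descend the maximal tensor product symmetric monoidal structure on $\nCalg_\sepa$ (with unit $\C$) along $\kk_\sepa^0$ via the universal property in Theorem \ref{weriojgweorgewrgfwerf9}, and then verify compatibility with the triangulated structure of Theorem \ref{goeirererfwe}. The first task is to show that for each fixed $A$ in $\nCalg_\sepa$, the composite
\[ \kk_\sepa^0 \circ (-\otimes A) \colon \nCalg_\sepa \to \KK_\sepa^0 \]
is homotopy invariant, $\mathbb{K}$-stable, and split-exact. Homotopy invariance follows from nuclearity of $C([0,1])$, which gives $C([0,1], B)\otimes A \cong C([0,1], B\otimes A)$, so a homotopy in $B$ tensors to one in $B\otimes A$. $\mathbb{K}$-stability reduces, after rearranging tensor factors, to the $\mathbb{K}$-stability of $\kk_\sepa^0$ itself. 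Split-exactness uses that the maximal tensor product preserves exact sequences and that tensoring a $*$-homomorphic splitting with $\id_A$ again yields a splitting. Theorem \ref{weriojgweorgewrgfwerf9} thus produces a unique additive functor $-\otimes A \colon \KK_\sepa^0 \to \KK_\sepa^0$ extending it.

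The next step is to iterate: for $X$ in $\KK_\sepa^0$, the functor $X\otimes \kk_\sepa^0(-) \colon \nCalg_\sepa \to \KK_\sepa^0$ inherits the three axioms from additivity of $X\otimes -$ together with the properties of $\kk_\sepa^0$, giving $X\otimes - \colon \KK_\sepa^0 \to \KK_\sepa^0$. The uniqueness in the universal property makes the collection $(X,Y)\mapsto X\otimes Y$ into a bifunctor on $\KK_\sepa^0\times\KK_\sepa^0$, with $\kk_\sepa^0$ strictly monoidal on objects. The associator, unitor and braiding of $\nCalg_\sepa$ descend to natural isomorphisms on $\KK_\sepa^0$: each is defined by universally extending its image under $\kk_\sepa^0$ in each slot, and the pentagon, triangle and hexagon axioms hold on $\KK_\sepa^0$ because each such equation of natural transformations holds after restriction along $\kk_\sepa^0$, hence by uniqueness of descent holds on $\KK_\sepa^0$.

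Finally I would verify the tensor triangulated compatibility, namely that each $-\otimes X$ is an exact endofunctor of $\KK_\sepa^0$. Compatibility with the shift is immediate since $S(B) = C_0(\R)\otimes B$ gives $S(B)\otimes X \simeq S(B\otimes X)$ naturally. For distinguished triangles, by Theorem \ref{goeirererfwe} it suffices to preserve images under $\kk_\sepa^0$ of Puppe sequences. Since the mapping cone satisfies $C(f)\otimes A \cong C(f\otimes \id_A)$ (again using nuclearity of $C_0((0,1])$), the Puppe sequence of $f\colon B\to C$ tensored with $\kk_\sepa^0(A)$ is the image of the Puppe sequence of $f\otimes \id_A$. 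The case of general $X$ follows because $\kk_\sepa^0$ is bijective on objects. The most delicate part throughout is handling the coherence of the symmetric monoidal descent rather than any single verification; the main engine remains the uniqueness clause of the universal property of $\kk_\sepa^0$, which one must invoke repeatedly and consistently.
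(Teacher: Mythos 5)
The paper does not prove this theorem; it is imported from Meyer--Nest \cite{MR2193334}, so there is no internal argument to compare yours against. That said, your descent strategy is the standard route and is essentially correct: for the maximal tensor product, $-\otimes_{\max}A$ is exact, preserves $*$-homomorphic splittings, commutes with $C([0,1])\otimes -$ and with mapping cones (so $C(f)\otimes_{\max}A\cong C(f\otimes \id_A)$), and these facts supply exactly the hypotheses of \cref{weriojgweorgewrgfwerf9} and the reduction of exactness of $-\otimes X$ to Puppe sequences via \cref{goeirererfwe}. One point should be made explicit rather than left to "uniqueness of descent": to assemble the endofunctors $-\otimes A$ into a bifunctor and to verify the coherence diagrams, you need more than the literal initiality statement of \cref{weriojgweorgewrgfwerf9} --- you need that restriction along $\kk_{\sepa}^0$ is fully faithful on natural transformations between descended functors, i.e.\ the $1$-categorical analogue of the equivalence \eqref{sdvdscadscasdcds}. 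Faithfulness is automatic because $\kk_{\sepa}^0$ is bijective on objects (so a transformation is determined by its components), and fullness is part of Higson's theorem; concretely, functoriality in the second variable is obtained by descending $\kk_{\sepa}^0(-\otimes f)$ to a transformation $-\otimes A\Rightarrow -\otimes A'$ of endofunctors and then applying the universal property to the resulting functor $\nCalg_{\sepa}\to \Fun(\KK_{\sepa}^0,\KK_{\sepa}^0)$, which is what your appeal to uniqueness amounts to. Finally, note that \cite{MR2193334} actually works with the minimal tensor product (see the paper's footnote), for which $-\otimes_{\min}A$ need not be exact on all short exact sequences; your argument is tailored to the maximal tensor product, consistent with the paper's standing convention, and in the minimal case one would instead invoke preservation of split-exact and semi-split sequences, which is all that is really used.
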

 
 \begin{rem}
Note that in contrast to a lax symmetric monoidal functor, the assertion that $\kk_{\sepa,0}$ is symmetric monoidal expresses the facts that this functor preserves the tensor unit and that the monoidal  structure maps
$$\kk_{\sepa}^0(A)\otimes \kk_{\sepa}^0(B)\to \kk_{\sepa}^0(A\otimes B)$$
 are isomorphisms for all $A,B$ in $\nCalg_{\sepa}$. 
Recall that for this survey we consider the maximal tensor product on $\nCalg$, but 
 \cref{weklporgwergfwerfw} also has a version for the minimal tensor product\footnote{{In fact, \cite{MR2193334} considered only the minimal tensor product.}}. 
 \hB
 \end{rem}

  Using the shift operator  $S$ of the triangulated structure on $\KK_{\sepa}^{0}$ for any two objects $A,B$ in $\KK_{\sepa,0}$ we  obtain $\Z$-graded $KK$-groups by
 $$  KK_{n}(A,B):=\Hom_{\KK_{\sepa}^0}(  A ,S^{n}(B))$$  for all $n$ in $\Z$.
  The composition in $\KK_{\sepa}^0$ canonically
extends to a morphism of $\Z$-graded abelian groups
$$  KK_*(B,C)\otimes   KK_*(A,B)\to  KK_*(A,C)\ . $$
The isomorphism \eqref{fdsvsoihwjiovwefvwf} extends to an isomorphism of $\Z$-graded abelian group-valued functors 
\begin{equation}\label{asvoicjadsocadscadcwq}KK_*(\C, -) \cong K_*(-)
\end{equation}  (use \cite[Prop. 17.5.6]{blackadar}).
In particular we have an isomorphism $KK_*(\C,\C)\cong K_*(\C)\cong  \KU_*\cong \Z[\beta,\beta^{-1}]$.
Since $\kk_{\sepa}^0(\C)$ is the tensor unit
in $\KK_{\sepa}^0$, by specializing \eqref{afdasdfdscasdcasd} and \eqref{sdvsdcsacd} we may view the $\Z$-graded abelian groups $KK_*(A,B)$ as the morphisms groups in a $\KU_*$-enriched category $\KK_{\sepa}^*$. This expresses the compatibility of the composition in $\KK_\sepa^*$ with Bott periodicity.
In total, for $A,B$ in
$\nCalg_{\sepa}$ the $\Z$-graded abelian group $KK_*(A,B)$ has a structure of a $\KU_{*}$-module, and the composition product factorizes as 
 $$KK_*(B,C)\otimes_{\KU_{*}} KK_*(A,B)\to KK_*(A,C)\ .$$ \begin{cor}
We have a symmetric monoidal functor
$$\kk_{\sepa}^*\colon \nCalg_{\sepa}\to \KK_{\sepa}^*$$
whose target is additive and enriched in $\Mod(\KU_{*})$.
\end{cor}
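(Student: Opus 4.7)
The plan is to build $\KK_\sepa^*$ directly from the tensor triangulated structure on $\KK_\sepa^0$ established in \cref{weklporgwergfwerfw} and then to extend the symmetric monoidal functor $\kk_\sepa^0$ to the graded setting. Concretely, I would take $\KK_\sepa^*$ to have the same objects as $\KK_\sepa^0$, and morphism groups given by the $\Z$-graded abelian groups $KK_*(A,B) = \bigoplus_{n \in \Z} \Hom_{\KK_\sepa^0}(A, S^n B)$. Composition is defined by applying the suspension, yielding a map $KK_m(B,C) \otimes KK_n(A,B) \to KK_{m+n}(A,C)$, $(g,f) \mapsto S^n(g) \circ f$. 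Additivity of $\KK_\sepa^*$ then follows from additivity of $\KK_\sepa^0$ together with the fact that $S$ is an autoequivalence, hence preserves finite biproducts.

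To obtain the enrichment in $\Mod(\KU_*)$, I would invoke \eqref{asvoicjadsocadscadcwq} together with the observation that $\kk_\sepa^0(\C)$ is the tensor unit of $\KK_\sepa^0$, and hence also of $\KK_\sepa^*$. By the $1$-categorical version of \cref{wtijgowergferwgwref}, the graded endomorphism ring $KK_*(\C,\C) \cong \KU_*$ acts canonically on every morphism group $KK_*(A,B)$ by tensoring with the unit, compatibly with composition; this makes $\KK_\sepa^*$ an additive category enriched in $\Mod(\KU_*)$, with composition factoring through the tensor product over $\KU_*$ as claimed.

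The symmetric monoidal refinement $\kk_\sepa^*$ is obtained by extending the tensor product on $\KK_\sepa^0$ to $\KK_\sepa^*$ along the gradings. The key point is that the suspension $S$ is implemented by tensoring with the object $\kk_\sepa^0(C_0(\R))$, which is invertible in $\KK_\sepa^0$ (this is Bott periodicity at the level of the unit), so the tensor product of morphisms of degrees $m$ and $n$ naturally lands in degree $m+n$. Defining $\kk_\sepa^*$ as $\kk_\sepa^0$ on objects and as the degree-zero inclusion on morphism groups, its symmetric monoidality reduces tautologically to that of $\kk_\sepa^0$ from \cref{weklporgwergfwerfw}.

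The main obstacle I expect is the bookkeeping of Koszul signs: the tensor product of graded morphisms and its compatibility with composition require a consistent sign convention, and one must verify that these signs harmonize with both the triangulated structure on $\KK_\sepa^0$ (in particular with the distinguished triangles coming from Puppe sequences) and with Bott periodicity as encoded in $\KU_* \cong \Z[\beta, \beta^{-1}]$. Modulo this sign discipline, the corollary is a formal consequence of \cref{weklporgwergfwerfw} and the identification \eqref{asvoicjadsocadscadcwq}.
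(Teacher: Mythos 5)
Your proposal follows essentially the same route as the paper: the text preceding the corollary defines $KK_{n}(A,B):=\Hom_{\KK_{\sepa}^0}(A,S^{n}B)$, extends composition via the shift, uses that $\kk_{\sepa}^0(\C)$ is the tensor unit together with \cref{wtijgowergferwgwref} to obtain the $\Mod(\KU_{*})$-enrichment, and derives the symmetric monoidal refinement from the tensor triangulated structure of \cref{weklporgwergfwerfw}. Your additional remarks on Koszul signs and the invertibility of $\kk_{\sepa}^0(C_0(\R))$ are sensible elaborations of points the paper leaves implicit.
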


\begin{theorem}\label{erkgjowergwrfwerfw}
There is a canonical isomorphism of lax symmetric monoidal functors  
$KK_*(\C,-) \cong K_*(-) \colon \nCalg_{\sepa} \to \Mod(\KU_*)$.
\end{theorem}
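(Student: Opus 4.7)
The plan is to upgrade the plain natural isomorphism $KK_*(\C,-) \cong K_*(-)$ from \eqref{asvoicjadsocadscadcwq} to an isomorphism of lax symmetric monoidal functors by equipping both sides with canonical lax symmetric monoidal structures and verifying compatibility of these structures. The lax symmetric monoidal structure on the target $K_*$ is the one recorded in \eqref{vsavdscasdcasdcd}, whose structure maps are the classical exterior products of projections extended by Bott periodicity.

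For the source, I would produce the lax symmetric monoidal refinement of $KK_*(\C,-)$ as follows. By \cref{weklporgwergfwerfw} the functor $\kk_\sepa^* \colon \nCalg_\sepa \to \KK_\sepa^*$ is symmetric monoidal with target enriched in $\Mod(\KU_*)$, and $\kk_\sepa^*(\C)$ is the tensor unit of $\KK_\sepa^*$. Applying the $\Mod(\KU_*)$-enriched variant of \cref{wtijgowergferwgwref} to the enriched representable functor $\Hom_{\KK_\sepa^*}(\kk_\sepa^*(\C),-) \colon \KK_\sepa^* \to \Mod(\KU_*)$ equips the composition $KK_*(\C,-) = \Hom_{\KK_\sepa^*}(\kk_\sepa^*(\C),\kk_\sepa^*(-))$ with a canonical lax symmetric monoidal structure; its binary structure maps are, by construction, given by the Kasparov exterior product combined with the unit isomorphism $\C \otimes \C \cong \C$.

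With both structures in place, the isomorphism from \eqref{asvoicjadsocadscadcwq} becomes lax symmetric monoidal if and only if two conditions hold: (i) it identifies the two unit maps $\KU_* \to K_*(\C)$ and $\KU_* \to KK_*(\C,\C)$; and (ii) for all separable $A, B$ the diagram
\[
\begin{tikzcd}
	KK_*(\C,A) \otimes_{\KU_*} KK_*(\C,B) \ar[r] \ar[d,"\cong"'] & KK_*(\C,A\otimes B) \ar[d,"\cong"] \\
	K_*(A) \otimes_{\KU_*} K_*(B) \ar[r] & K_*(A\otimes B)
\end{tikzcd}
\]
commutes, where the top arrow is the Kasparov exterior product and the bottom one is the classical exterior product from \eqref{afdsfadsfaewad}. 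Condition (i) holds by construction, since both identifications with $\KU_*$ descend from the same normalizations (the Bott element and the unit projection in $\C$) implicit in \eqref{asvoicjadsocadscadcwq}.

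The main obstacle is condition (ii). I would attack it by reducing, via naturality of both sides and stability under direct sums, to classes of degree zero represented by projections $p \in M_n(A^+)$ and $q \in M_m(B^+)$. The key classical fact is that on such classes the Kasparov exterior product is computed by the tensor product projection $p \otimes q \in M_{nm}((A\otimes B)^+)$, which is precisely the formula defining the classical exterior product on $K_0$; see \cite[\S 18.9]{blackadar}. Propagation to all degrees follows because Bott periodicity, which generates the $\KU_*$-action on both sides, is itself implemented through exterior multiplication with the Bott class in $KK_*(\C, C_0(\R^2))$ and is therefore manifestly compatible with both exterior products. This is the only non-formal input required; once it is granted, the verification of the lax symmetric monoidal isomorphism is purely diagrammatic.
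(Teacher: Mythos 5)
Your proof is correct in substance, but it takes a genuinely different route from the paper. The paper's argument is entirely formal: after reducing to $\Ab^{\Z}$-valued functors via \cref{wtijgowergferwgwref}, it observes that $\Hom_{\KK_\sepa^*}(\kk_\sepa^*(\C),-)$ is the \emph{initial} lax symmetric monoidal functor $\KK_\sepa^* \to \Ab^{\Z}$, factors the lax symmetric monoidal functor $K_*$ through $\kk_\sepa^*$ using Higson's universal property (\cref{weriojgweorgewrgfwerf9}), and obtains the desired lax symmetric monoidal transformation as the unique one out of the initial object; no cycle-level computation is needed, and canonicity is automatic. You instead construct the lax monoidal structure on $KK_*(\C,-)$ by hand (essentially the same enriched-corepresentable construction) and then \emph{verify} that the specific isomorphism \eqref{asvoicjadsocadscadcwq} intertwines the two binary structure maps, reducing to the classical fact that the Kasparov exterior product of projection classes is the tensor-product projection, with Bott periodicity handling the odd degrees. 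Your approach costs more: it needs the nontrivial classical input from \cite[\S 18.9]{blackadar} and the compatibility of the two implementations of Bott periodicity, and since the targets are $1$-categories you are right that checking units and binary compatibility suffices (the higher coherences are properties, not data). What it buys is an explicit identification at the level of cycles and, arguably, a cleaner justification of the claim that the lax symmetric monoidal transformation really does \emph{refine the previously known isomorphism} \eqref{asvoicjadsocadscadcwq} -- a point the paper's purely formal argument asserts somewhat quickly, since uniqueness of the transformation out of the initial object does not by itself say which underlying natural transformation one gets. Both proofs are valid; the paper's generalizes more readily to the real and equivariant settings where explicit projection calculus is less pleasant.
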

\begin{proof}
We must show that the isomorphism \eqref{asvoicjadsocadscadcwq} refines to an isomorphism of lax symmetric monoidal functors in a canonical way. In view of 
 \cref{wtijgowergferwgwref} we can replace the target of these functors by the category $\Ab^{\Z}$ of graded abelian groups.

Since $\kk_\sepa^*(\C)$ is the tensor unit of $\KK_\sepa^*$,  the functor $\Hom_{\KK_\sepa^*}(\kk_\sepa^*(\C),-)$ is the initial lax symmetric monoidal functor from  $\KK_\sepa^*$ to $\Ab^{\Z}$. 
Furthermore,  $K_*\colon \Calg_{\sepa} \to \Ab^\Z$ is naturally lax symmetric monoidal, see e.g.\ \cite[Prop.\ 1.1]{HG}. In view of the universal property of   $\kk_\sepa^*$  it has a factorization
$$\xymatrix{ \Calg_{\sepa}\ar[rr]^{K_{*}}\ar[dr]^{\kk^{*}_{\sepa}}&&\Ab^{\Z}\\&\KK^{*}_{\sepa}\ar@{..>}[ur]^{\K_{*}}&}\ ,$$
where $\K_* \colon \KK_\sepa^* \to \Ab^\Z$ is again lax symmetric monoidal. Consequently, there is a unique lax symmetric monoidal transformation $\Hom_{\KK_\sepa^*}(\kk_\sepa^*(\C),-) \to \K_*(-)$ of functors from $\KK^{*}_{\sepa}$ to $\Ab^{\Z}$. The restriction of this  transformation along the   symmetric monoidal
functor $\kk^{*}_\sepa$ is a lax symmetric monoidal transformation
$KK_*(\C,-)\to  K_*(-)$ refining the isomorphism 
\eqref{asvoicjadsocadscadcwq}.
\end{proof}

\subsection{Spectrum valued $KK$-theory} 
\label{wekojgwerferfwe1}

In this section we 
 turn to the homotopy theoretic aspects of $KK$-theory. 
 Often the structure of a triangulated category $\cT$ {is governed by} a stable $\infty$-category $\bT$ whose homotopy category $\ho(\bT)$ is equipped with a triangulated equivalence 
$\cT\simeq \ho(\bT)$.
For two objects $A,B$ in $\bT$ we then have the mapping spectrum
$\map_\bT(A,B)$ such that $$\pi_{n}\map_\bT(A,B)\cong \Hom_{\cT}(\ho(A),\Sigma^{-n}\ho(B))$$  for all $n$ in $\Z$.
 The triangulated category $\KK_{\sepa,0}$ is no exception to this  {principle}.
\begin{prop}[{\cite{LN}},\cite{Bunke:2023aa}]\label{werojigwergwerfw}
There exists an  idempotent-complete  stable $\infty$-category $\KK_{\sepa}$, a functor
$\kk_{\sepa}\colon\nCalg_{\sepa}\to \KK_{\sepa}$,  and a commutative triangle of $\infty$-categories
$$\xymatrix{\nCalg_{\sepa}\ar[dr]^{\kk_{\sepa}^0}\ar[rr]^{\kk_{\sepa}}&&\KK_{\sepa}\ar[dl]_{\ho}\\&\KK_{\sepa}^0&}$$
such that $\ho$ identifies   $\KK_{\sepa}^0$ with the homotopy category of $\KK_{\sepa}$ as triangulated categories.
\end{prop}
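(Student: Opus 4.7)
The strategy is to construct $\KK_\sepa$ as a Dwyer-Kan $\infty$-categorical localization of an enriched version of $\nCalg_\sepa$ at the $KK$-equivalences. I would first promote $\nCalg_\sepa$ to a topologically enriched category $\nCalg_\sepa^{\topp}$ by equipping each $\Hom$-set with the point-norm topology, and apply the coherent nerve to obtain an $\infty$-category. Homotopy invariance at this stage is then automatic, since point-norm homotopies in $\nCalg_\sepa$ correspond to paths in these mapping spaces. Next, let $W$ denote the class of morphisms in $\nCalg_\sepa^{\topp}$ sent to isomorphisms by the Higson functor $\kk_\sepa^0$, and define
\[
\kk_\sepa \colon \nCalg_\sepa \to \KK_\sepa := \nCalg_\sepa^{\topp}[W^{-1}]
\]
as the associated Dwyer-Kan localization. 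The required factorization through $\kk_\sepa^0$ is then obtained from the universal property of this localization combined with Theorem \ref{weriojgweorgewrgfwerf9}.

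The next task is to verify that $\KK_\sepa$ is stable. The zero $C^{*}$-algebra becomes a zero object in $\KK_\sepa$ by an appeal to $\mathbb{K}$-stability and the vanishing of the corresponding $KK_{0}$-groups. Semiadditivity is inherited from the classical additive structure on $\KK_\sepa^0$ via the principle that $KK_0$-valued functors are canonically additive (originally due to Higson). The crucial nontrivial input is semi-exactness (Theorem \ref{rthojirthrtegetrg}): it implies that for every $*$-homomorphism $f$ the Puppe segment $S(C) \to C(f) \to Z(f) \to C$ maps to a cofiber sequence in $\KK_\sepa$, and in particular that the suspension $S(-)=C_0(\mathbb{R},-)$ becomes an autoequivalence (via Bott periodicity, realized by an explicit $KK$-equivalence $S^{2}(A)\simeq A$). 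Together these yield stability; idempotent completeness is then arranged either by passing to the idempotent completion or by exploiting that countable $C^{*}$-direct sums in $\nCalg_\sepa$ realize a telescope construction that already splits idempotents under $\kk_\sepa$.

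Finally, I would identify $\ho(\KK_\sepa)$ with $\KK_\sepa^0$ as triangulated categories. On the one hand, $\ho(\KK_\sepa)$ is additive, homotopy invariant, $\mathbb{K}$-stable, and split-exact, so Theorem \ref{weriojgweorgewrgfwerf9} provides a unique factorization $\KK_\sepa^0 \to \ho(\KK_\sepa)$ of $\ho(\kk_\sepa)$. On the other hand, since $\kk_\sepa^0$ inverts $W$, the universal property of the Dwyer-Kan localization produces a functor in the opposite direction. These are mutual inverses, giving the desired additive equivalence, and the two triangulated structures coincide because both are generated by the images of Puppe sequences (Theorem \ref{goeirererfwe}). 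I expect the main obstacle to be the stability step: translating classical semi-exactness, a statement about abelian-group-valued $\Hom$-functors, into the existence and compatibility of honest pushouts and pullbacks in the $\infty$-categorical localization requires careful control of mapping spectra, and this is where the bulk of the technical work in \cite{LN} and \cite{Bunke:2023aa} is concentrated.
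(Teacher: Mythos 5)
Your construction is essentially the paper's: both define $\KK_{\sepa}$ as the Dwyer--Kan localization of $\nCalg_{\sepa}$ at the $\kk_{\sepa}^0$-equivalences (your intermediate passage through the topologically enriched category changes nothing here, since $W$ already contains the homotopy equivalences) and then verify stability and the identification of the homotopy category using Higson's universal property, semi-exactness, and Bott periodicity. The paper's own proof is only an idea sketch that delegates the verification to Uuye's fibration category structure on $\nCalg_{\sepa}$; your outline of that verification is consistent with this, and you correctly flag the one place where the real technical content lies, namely producing finite (co)limits and controlling mapping spaces in the localization.
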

\begin{proof}({Idea})
A morphism $f\colon A\to B$  in $\nCalg_{\sepa}$ is called a $\kk_{\sepa}^0$-equivalence if $\kk_{\sepa}^0$ is an isomorphism. We let $W_{\kk_{\sepa}^0}$ denote the set of  $\kk_{\sepa}^0$-equivalences. Since the functor $\ho$ detects equivalences it is clear that 
  the functor $\kk_{\sepa }$ in   \cref{werojigwergwerfw}  (if it exists)  has the property that it sends
 $\kk_{\sepa}^0$-equivalences  
 to equivalences. As a candidate, we can therefore try the Dwyer-Kan localization  \begin{equation}\label{sdfvsfdvdfsvs} \kk_{\sepa}\colon\nCalg_{\sepa}\to \nCalg_{\sepa}[W_{\kk_{\sepa}^0}^{-1}]=:\KK_{\sepa}\end{equation}
Using the fibration category structure on $\nCalg_\sepa$ introduced in \cite{Uuye:2010aa} one can now verify that \eqref{sdfvsfdvdfsvs} indeed has the required properties. 
\end{proof}
 
The notions of homotopy invariance and $\mathbb{K}$-stability  from the previous section extend to functors with values in $\infty$-categories by replacing the words equal or isomorphism by equivalent or equivalences, respectively.

\begin{definition}A functor  from $\nCalg$ or $\nCalg_{\sepa}$ to a stable $\infty$-category is called exact (semi-exact, split-exact) if it sends the zero algebra to zero and exact (semi-exact, split-exact) sequences of $C^{*}$-algebras to fibre sequences.
  \end{definition}

\begin{prop}[\cite{LN}]\label{werojigwergwerfw1}
The functor
$\kk_{\sepa}\colon\nCalg_{\sepa}\to \KK_{\sepa}$ is 
homotopy invariant, $\mathbb{K}$-stable, and semi-exact. It is 
inital among functors from $\nCalg_{\sepa}$ to stable $\infty$-categories which are homotopy invariant, $\mathbb{K}$-stable, and semi-exact.
\end{prop}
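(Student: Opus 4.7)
The plan is to first verify the three properties of $\kk_\sepa$, then establish the universal property via the universal property of Dwyer--Kan localizations, crucially using Higson's classical characterization \cref{weriojgweorgewrgfwerf9}. For homotopy invariance, I would observe that the constant-inclusion $\iota_A\colon A \to A\otimes C([0,1])$ is a $\kk_\sepa^0$-equivalence: it admits a retraction via evaluation at $0$, while the composition in the other direction is homotopic to the identity, so homotopy invariance of $\kk_\sepa^0$ forces $\kk_\sepa^0(\iota_A)$ to be invertible. Hence $\iota_A$ lies in $W_{\kk_\sepa^0}$ and is sent to an equivalence by $\kk_\sepa$, which suffices for $\infty$-categorical homotopy invariance. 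The $\mathbb{K}$-stability of $\kk_\sepa$ is immediate, since the upper corner embedding is a $\kk_\sepa^0$-equivalence by \cref{weriojgweorgewrgfwerf9}.

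For semi-exactness, given a semi-split sequence \eqref{ergwergerferffw} and its associated Puppe diagram \eqref{adscadscasdffccd}, \cref{rthojirthrtegetrg} shows that the map $\iota\colon A \to C(f)$ lies in $W_{\kk_\sepa^0}$, while $h\colon B \to Z(f)$ is a homotopy equivalence; both are therefore sent to equivalences by $\kk_\sepa$. It remains to argue that the image of the Puppe segment $C(f) \to Z(f) \to C$ is a fibre sequence in $\KK_\sepa$, which I would deduce from the fibration category presentation of $\nCalg_\sepa$ used to construct $\KK_\sepa$ in \cref{werojigwergwerfw}: Puppe sequences are fibre sequences in this fibration category and the Dwyer--Kan localization to the associated stable $\infty$-category preserves them.

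For the universal property, let $F\colon \nCalg_\sepa \to \cD$ be homotopy invariant, $\mathbb{K}$-stable, and semi-exact with $\cD$ stable. By the universal property of the Dwyer--Kan localization $\kk_\sepa$ of \eqref{sdfvsfdvdfsvs}, the factorization problem reduces to showing that $F$ sends every morphism in $W_{\kk_\sepa^0}$ to an equivalence. For this, consider the composite $F^0\colon \nCalg_\sepa \xrightarrow{F} \cD \to \ho(\cD)$, which lands in the additive homotopy category of $\cD$. The $\infty$-categorical homotopy invariance and $\mathbb{K}$-stability of $F$ descend tautologically to $F^0$. To verify split-exactness of $F^0$ in Higson's sense, note that any split-exact sequence is semi-split, hence sent by $F$ to a fibre sequence in $\cD$; the $*$-homomorphism splitting yields a section of this fibre sequence, forcing it to be a biproduct diagram, which in turn descends to a split short exact sequence in $\ho(\cD)$. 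By \cref{weriojgweorgewrgfwerf9}, $F^0$ factors through $\kk_\sepa^0$, so $F^0$, and therefore $F$, invert every $\kk_\sepa^0$-equivalence; uniqueness of the resulting extension $\bar F\colon \KK_\sepa \to \cD$ is automatic from the universal property of Dwyer--Kan localizations. I expect the main obstacle to be the verification that the Puppe sequence in $\KK_\sepa$ is a fibre sequence, which requires carefully relating the triangulated-categorical picture of \cref{goeirererfwe} with the $\infty$-categorical presentation of \cref{werojigwergwerfw}.
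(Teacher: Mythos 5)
Your proposal follows essentially the same route as the paper's (itself only sketched) proof: verify the three properties, then pass to the homotopy category of the stable target, check Higson's hypotheses there, invoke \cref{weriojgweorgewrgfwerf9} to conclude that $F$ inverts the $\kk_\sepa^0$-equivalences, and factor through the Dwyer--Kan localization \eqref{sdfvsfdvdfsvs}. Your verification of split-exactness of $F^0$ via biproduct diagrams, and your treatment of semi-exactness of $\kk_\sepa$ via \cref{rthojirthrtegetrg} together with the fibration category structure, are exactly the intended arguments.

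One step the paper includes that you omit: after obtaining the factorization $\bar F\colon \KK_\sepa \to \cD$ from the localization, one must still check that $\bar F$ is \emph{exact}, since the universal property in the form actually used later (the equivalence \eqref{sdvdscadscasdcds}) is an equivalence onto $\Fun^{\mathrm{ex}}(\KK_\sepa,\bF)$. This follows from the observation that every fibre sequence in $\KK_\sepa$ is, up to equivalence, the image of a semi-split short exact sequence of $C^*$-algebras (mapping cone sequences generate them), combined with the semi-exactness of $F$ — the same mechanism you already use to prove semi-exactness of $\kk_\sepa$, so the gap is easily closed.
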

\begin{proof}(Sketch)
 {First, we note that $\kk_\sepa$ is indeed homotopy invariant $\mathbb{K}$-stable and semi-exact.}
 {Now} suppose {that we are} given a homotopy invariant, $\mathbb{K}$-stable and semi-exact functor $F \colon \nCalg_{\sepa} \to \cC$, where $\cC$ is some stable $\infty$-category. {We need to show that $F$ factors essentially uniquely through an exact functor $\tilde{F} \colon \KK_\sepa \to \cC$.}
First, we argue that it factors through a (necessarily unique) functor $\KK_{\sepa} \to \cC$. Since the homotopy category $\ho(\cC)$ is additive and the canonical functor $\cC \to \ho(\cC)$ is conservative and split-exact, this follows from Higson's theorem \cref{weriojgweorgewrgfwerf9}. To see that the resulting functor $\KK_{\sepa} \to \cC$ is exact, one uses that any fibre sequence in $\KK_{\sepa}$ is the image of a semi-exact short exact sequence of $C^{*}$-algebras and the semi-exactness of $F$.
\end{proof}
 
\begin{rem}\label{sequence-of-localizations} {For later use, we} note that the localization map $\kk_{\sepa}\colon \nCalg_{\sepa}\to \KK_{\sepa}$ factors as a series of localizations:
\begin{equation}\label{gregwregfwfrfwrfwfwrfrf}\nCalg_{\sepa} \xrightarrow{L_{h}} \nCalg_{\sepa,h} \xrightarrow{L_{K}} L_{K}\nCalg_{\sepa,h} \xrightarrow{L_{se}} L_{K}\nCalg_{\sepa,h,se} \xrightarrow{\Omega^{2}} \KK_{\sepa} \ , 
\end{equation}
where $L_{h}$  formally inverts the homotopy equivalences,  $L_{K}$ inverts the left-upper corner inclusions $A \to A \otimes \mathbb{K}$, $L_{se}$  forces, by inverting the maps $\iota$ from \eqref{adscadscasdffccd}, semi-exact sequences to {be} fibre sequences, and  $\Omega^{2}$
 is a 
 right Bousfield localization
at the full subcategory of group objects.  Here $\nCalg_{\sepa,h}$ is left-exact with fibre sequences induced under $L_{h}$ by mapping cone sequences,
 $L_{K}\nCalg_{\sepa,h}$ and $L_{K}\nCalg_{\sepa,h,se}$ are in addition semi-additive, and
 $\KK_{\sepa}$ is stable. The functors $L_{K}$, $L_{se}$ and $\Omega^{2}$ are left-exact. The details are explained in  \cite{Bunke:2023aa}. In this reference {it is} further shown that $\nCalg_{\sepa,h}$ is simply the $\infty$-category associated to the topologically enriched category of separable $C^*$-algebras, and $L_{K}$ is a left Bousfield localization  at the $\mathbb{K}$-stable algebras.   All the above localization  functors have canonical symmetric monoidal refinements (also for the minimal tensor product).

 {Note that the above description of  $\KK_{\sepa}$ immediately implies \cref{werojigwergwerfw1} independently of the 
 classical literature.}
 \hB\end{rem}

\begin{prop}[{\cite{LN}, \cite{KKG}, \cite{Bunke:2023aa}}]\label{werojigwergwerfw2}
There is a canonical refinement of
$\KK_{\sepa}$ to a {stably} symmetric monoidal $\infty$-category such that
$\kk_{\sepa} $ refines to a symmetric monoidal functor.
The symmetric monoidal refinement of $\kk_{\sepa}$ is initial for lax symmetric monoidal functors from
$\nCalg_{\sepa}$ to symmetric monoidal stable $\infty$-categories which are homotopy invariant,  $\mathbb{K}$-stable and semi-exact functors.
 \end{prop}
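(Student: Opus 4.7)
The plan is to leverage the factorization of $\kk_\sepa$ as a composition $\Omega^{2}\circ L_{se}\circ L_{K}\circ L_{h}$ of four localizations recorded in Remark \ref{sequence-of-localizations}. As noted there, each of these localizations admits a canonical symmetric monoidal refinement with respect to the maximal tensor product. Composing them yields the required symmetric monoidal refinement of $\KK_{\sepa}$ together with the symmetric monoidal structure on $\kk_{\sepa}$.

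The main task is then the universal property. The general principle I would invoke is the following: if $(\cC,\otimes)$ is a symmetric monoidal $\infty$-category and $W\subseteq\Mor(\cC)$ is closed under tensoring with arbitrary objects of $\cC$, then the symmetric monoidal Dwyer--Kan localization $\ell_{W}\colon\cC\to\cC[W^{-1}]$ enjoys the following universal property: for any symmetric monoidal $\infty$-category $\cD$, restriction along $\ell_{W}$ induces a fully faithful embedding
\[
\Fun^{\otimes,\mathrm{lax}}(\cC[W^{-1}],\cD)\hookrightarrow \Fun^{\otimes,\mathrm{lax}}(\cC,\cD)
\]
whose essential image consists of those lax symmetric monoidal functors sending each morphism of $W$ to an equivalence; this is a consequence of the operadic localization machinery of \cite[\S 4.1.7]{HA}. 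An entirely analogous statement holds for strong symmetric monoidal functors.

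With this in hand, the argument proceeds step by step along the chain \eqref{gregwregfwfrfwrfwfwrfrf}. Given a lax symmetric monoidal $F\colon\nCalg_{\sepa}\to\cC$ with the required properties, homotopy invariance lets $F$ descend, as a lax symmetric monoidal functor, through $L_{h}$; $\mathbb{K}$-stability then provides a further descent through $L_{K}$; semi-exactness through $L_{se}$; and the stability of $\cC$ through $\Omega^{2}$ (here using that every stable $\infty$-category is equivalent, via looping, to the category of group objects in its underlying semi-exact category). The essential uniqueness at each stage is provided by the fully faithful embedding above, and composing the four factorizations yields the desired lax symmetric monoidal factorization through $\kk_{\sepa}$.

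The main obstacle is the verification that each class of morphisms being inverted in \eqref{gregwregfwfrfwrfwfwrfrf} is stable under tensoring with arbitrary objects, which is the precondition for the general principle above. This is immediate for $L_{h}$ (by continuity of the maximal tensor product in the point-norm topology), is clear for $L_{K}$ (since tensoring the upper corner inclusion $A\to A\otimes\mathbb{K}$ with $B$ yields an upper corner inclusion for $A\otimes B$, up to swapping tensor factors), and requires the most care for $L_{se}$, where one uses that the formation of the cone inclusions $\iota$ of \eqref{adscadscasdffccd} commutes with tensoring by arbitrary $C^{*}$-algebras up to canonical equivalence. The detailed verification of these stability properties in the present $C^{*}$-algebraic setting is carried out in \cite{Bunke:2023aa}.
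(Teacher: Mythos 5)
Your argument is correct in outline, but it takes a genuinely different route from the one in the paper. The paper's proof is a one-step localization: it observes that $\otimes$ preserves $\kk_{\sepa}^0$-equivalences in each variable as a consequence of the Meyer--Nest tensor triangulated structure on $\KK_{\sepa}^{0}$ (\cref{weklporgwergfwerfw}), and then invokes the symmetric monoidal properties of Dwyer--Kan localizations from \cite{hinich} applied directly to the class $W_{\kk_{\sepa}^{0}}$. You instead run the same general principle stage by stage along the factorization \eqref{gregwregfwfrfwrfwfwrfrf}, checking that each of the four more elementary classes of inverted morphisms is closed under tensoring. This is essentially the strategy of \cite{Bunke:2023aa}, and it buys independence from the classical input: you never need to know beforehand that $\KK_{\sepa}^{0}$ is tensor triangulated, whereas the paper's shorter argument leans on that classical theorem. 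The price is that you must be careful that at each stage after the first the inverted class lives in the previously localized (symmetric monoidal) category, and that the tensoring-stability checks (in particular that semi-split exact sequences and their cone sequences are preserved by $-\otimes_{\max} D$) are carried out there; you acknowledge this and correctly defer to \cite{Bunke:2023aa}. One small point you leave implicit: the universal property asserts an equivalence onto \emph{exact} lax symmetric monoidal functors out of $\KK_{\sepa}$, so you should also note that the induced functor is exact; this follows, as in the proof of \cref{werojigwergwerfw1}, from the fact that every fibre sequence in $\KK_{\sepa}$ is the image of a semi-split exact sequence together with the semi-exactness of $F$.
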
\begin{proof}(Sketch)
We use that $\otimes$ 
preserves $\kk_{\sepa}^0$-equivalences {in each variable} as a consequence of   \cref{weklporgwergfwerfw}. 
The proposition now  follows  from symmetric  monoidal  properties of  
Dwyer-Kan localizations \cite{hinich}.
\end{proof}

We again have a version of  \cref{werojigwergwerfw2} for the minimal tensor product.
 \begin{rem} \label{rtkgogfrewfrfw}
Let $\bF$ be any stable $\infty$-category and consider the full subcategory
$$\Fun^{\mathrm{hi,st,se}}(\nCalg_{\sepa},\bF)\subseteq \Fun (\nCalg_{\sepa},\bF)$$  consisting  of homotopy invariant, $\mathbb{K}$-stable, and semi-exact functors.   The universal property of $\kk_{\sepa}$ in   \cref{werojigwergwerfw1} says that  precomposition with $\kk_{\sepa}$ induces an 
equivalence
\begin{equation}\label{sdvdscadscasdcds}
\kk_{\sepa}^{*}\colon \Fun^{\mathrm{ex}}(\KK_{\sepa},\bF) \xrightarrow{\simeq}  \Fun^{\mathrm{hi,st,se}}(\nCalg_{\sepa},\bF)\ ,
\end{equation}  where  
$\Fun^{\mathrm{ex}} $ denotes the category of exact functors between stable $\infty$-categories.
Similarly, the refined universal property from   \cref{werojigwergwerfw2} states that  precomposition with the symmetric monoidal refinement of $\kk_{\sepa}$ induces  equivalences
\begin{equation}\label{sdvdscadscasdcds1}\kk_{\sepa}^{*}\colon\Fun^{ \mathrm{ex}}_{\otimes/\mathrm{lax}}(\KK_{\sepa},\bF) \stackrel{\simeq}{\to} \Fun_{\otimes/\mathrm{lax}}^{ \mathrm{hi,st,se}}(\nCalg_{\sepa},\bF)\end{equation} 
for every symmetric monoidal stable $\infty$-category,
where the subscript $\otimes/\mathrm{lax}$ indicates that we consider the categories of symmetric monoidal  or lax  symmetric monoidal functors. 
   \hB\end{rem}

We now extend $KK$-theory from separable to all $C^{*}$-algebras using $\Ind$-completions, that is, by formally adjoining filtered colimits, {see \cite[\S 5.5]{HTT}}. This construction provides a functor $\Ind\colon \mathrm{Cat}_{\infty}^{\mathrm{ex}} \to \mathrm{Pr}^{L}_{\mathrm{st}}$ from small stable $\infty$-categories and exact functors to presentable stable $\infty$-categories and colimit preserving (or equivalently left adjoint) functors.

  \begin{definition} We define  the presentable $\infty$-category
$$\KK:=\Ind(\KK_{\sepa}) \ .$$ 
\end{definition}
Since $\KK_{\sepa}$ is idempotent complete the canonical inclusion
\begin{equation}\label{fvsdfvvvsdvvfs}
y\colon \KK_{\sepa}\to \Ind(\KK_{\sepa})
\end{equation} 
identifies 
$\KK_{\sepa}$ with the full subcategory of $\KK$ {on} compact objects.
The $\Ind$-completion inherits the properties of being stable and the tensor structures, and $y$ is   exact and naturally  refines to a symmetric monoidal functor.
 \begin{definition}We define 
 { the} functor $\kk\colon \nCalg\to \KK$ by left-Kan extension 
$$\xymatrix{\nCalg_{\sepa}\ar[r]^{\kk_{\sepa }}\ar[dr]^{\incl} &
\KK_{\sepa}\ar[r]^{y}\ar@{==>}[d]&\KK\\&\nCalg \ar[ur]^{\kk}&} \ .$$
 \end{definition}
 
In order to state the universal property of $\kk$ in analogy to  \cref{werojigwergwerfw1} we introduce the following condition.

\begin{definition}\label{wjigriowrtgwefrefwerf}A functor $F$ on $\nCalg$ is called $s$-finitary, if
the canonical map
$$\colim_{A'\subseteq_{\sepa} A} F(A')\to F(A)$$ is an equivalence for all $A$ in $\nCalg$, where $A'$ in the index poset  of the  colimit runs over the separable subalgebras of $A$.
\end{definition}

\begin{cor}[{\cite{KKG},\cite{Bunke:2023aa}}]\label{ewkorgpegerwfrefw}
 The functor
$\kk\colon\nCalg \to \KK $ is homotopy invariant, $\mathbb{K}$-stable, semi-exact and $s$-finitary.
It is inital for functors from $\nCalg$ to presentable stable $\infty$-categories which are homotopy invariant, $\mathbb{K}$-stable,    semi-exact and $s$-finitary.
 \end{cor}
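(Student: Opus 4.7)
The plan is to verify the four listed properties of $\kk$ directly from the pointwise formula for the left Kan extension, and then deduce the universal property by combining \cref{werojigwergwerfw1} with the defining universal property of $\Ind$-completion.

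First, I would unpack the construction. Every $C^{*}$-algebra is the filtered colimit of its separable $C^{*}$-subalgebras, so the inclusion $\incl\colon \nCalg_{\sepa}\to \nCalg$ is ``filtered-dense''. The pointwise formula for $\kk:=\mathrm{Lan}_{\incl}(y\circ \kk_{\sepa})$ therefore gives, for every $A$ in $\nCalg$, an equivalence
\[
\kk(A)\simeq \colim_{A'\subseteq_{\sepa} A} \kk_{\sepa}(A')\ ,
\]
which is precisely $s$-finitarity. Homotopy invariance of $\kk$ then follows from homotopy invariance of $\kk_{\sepa}$ together with the standard separable subalgebra argument: a point-norm continuous path of $*$-homomorphisms $A\to B$ factors, after restriction to any given separable $A'\subseteq A$, through some separable $B'\subseteq B$, and filtered colimits commute with each other. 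The same subalgebra argument gives $\mathbb{K}$-stability once we note that $-\otimes \mathbb{K}$ commutes with filtered colimits of $C^{*}$-algebras, so that
\[
\kk(A\otimes \mathbb{K})\simeq \colim_{A'\subseteq_{\sepa}A} \kk_{\sepa}(A'\otimes\mathbb{K})\simeq \colim_{A'\subseteq_{\sepa}A}\kk_{\sepa}(A')\simeq \kk(A)\ .
\]

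For semi-exactness I would argue that any semi-split exact sequence $0\to A\to B\to C\to 0$ can be written as a filtered colimit of semi-split exact sequences $0\to A'\to B'\to C'\to 0$ of separable subalgebras, using the cpc split to produce compatible separable $B'$ and $A'$ from a given separable $C'\subseteq C$. Applying $\kk_{\sepa}$ to each such sequence yields a fibre sequence in $\KK_{\sepa}$ by \cref{werojigwergwerfw1}, and fibre sequences in the stable $\infty$-category $\KK$ are preserved by filtered colimits, so $\kk$ sends the original sequence to a fibre sequence. This step is the main technical obstacle: one has to be slightly careful that the cpc split on $C$ can be arranged to restrict to a cpc split between the chosen separable subalgebras, but this is the standard consequence of the existence of cpc lifts on separable quotients.

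It remains to establish the universal property. Let $F\colon \nCalg\to \bC$ be homotopy invariant, $\mathbb{K}$-stable, semi-exact, and $s$-finitary, with $\bC$ presentable stable. The restriction $F|_{\nCalg_{\sepa}}$ satisfies the hypotheses of \cref{werojigwergwerfw1}, hence factors essentially uniquely as $F|_{\nCalg_{\sepa}}\simeq \bar F\circ \kk_{\sepa}$ with $\bar F\colon \KK_{\sepa}\to \bC$ exact. Since $\bC$ is presentable and $\KK=\Ind(\KK_{\sepa})$, the universal property of $\Ind$-completion produces an essentially unique colimit-preserving extension $\tilde F\colon \KK\to \bC$ of $\bar F$ along $y$. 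Then $\tilde F\circ \kk$ is $s$-finitary (as $\tilde F$ preserves colimits and $\kk$ is $s$-finitary) and agrees with $F$ on $\nCalg_{\sepa}$, so two $s$-finitary functors on $\nCalg$ whose restrictions to separable algebras coincide must themselves be equivalent, yielding $F\simeq \tilde F\circ \kk$. Uniqueness of $\tilde F$ follows by reversing these two steps: any colimit-preserving factorization of $F$ through $\kk$ restricts to an exact factorization of $F|_{\nCalg_{\sepa}}$ through $\kk_{\sepa}$, which is unique by \cref{werojigwergwerfw1}, and its colimit-preserving extension along $y$ is then unique by the universal property of $\Ind$.
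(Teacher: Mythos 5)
Your proposal is correct and follows essentially the same route as the paper: the paper packages the argument as a commutative square of functor-category equivalences (restriction along $\nCalg_{\sepa}\hookrightarrow\nCalg$ identifies $s$-finitary functors with arbitrary functors on separable algebras, restricts to the decorated subcategories via the pointwise Kan extension formula, and is then combined with \cref{werojigwergwerfw1} and the universal property of $\Ind$-completion), whereas you verify the same facts objectwise, but the content is identical. Your explicit treatment of the separable-subalgebra exhaustion for semi-split sequences fills in a step the paper leaves implicit in the phrase ``further exploiting the pointwise formula.''
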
 

 \begin{cor}[{\cite{KKG},\cite{Bunke:2023aa}}]\label{ewkorgpegerwfrefw1}
The presentable stable $\infty$-category $\KK$
refines to a presentably  symmetric  monoidal stable $\infty$-category and $\kk$ refines to a symmetric monoidal functor. 
This symmetric monoidal refinement is initial for lax symmetric monoidal functors from $\nCalg$ to
presentable symmetric monoidal stable $\infty$-categories which are homotopy invariant, $\mathbb{K}$-stable,    semi-exact and $s$-finitary.
 \end{cor}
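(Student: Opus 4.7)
The plan is to produce the symmetric monoidal refinement in two steps — passing from $\kk_{\sepa}$ to its Ind-completion and then left Kan extending to all of $\nCalg$ — and then to deduce the universal property by combining the corresponding universal properties of $\kk_{\sepa}$ (\cref{werojigwergwerfw2}), of Ind-completion, and of left Kan extension along a symmetric monoidal fully faithful inclusion.

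First I would observe that the maximal tensor product on $\nCalg$ preserves filtered colimits in each variable, and that the tensor product of separable $C^{*}$-algebras is again separable, so that $\nCalg_{\sepa}\subset \nCalg$ is a symmetric monoidal full subcategory whose inclusion preserves filtered colimits of separable algebras. By \cref{werojigwergwerfw2}, $\kk_{\sepa}\colon \nCalg_{\sepa}\to \KK_{\sepa}$ refines canonically to a symmetric monoidal functor into a small, stably symmetric monoidal $\infty$-category. I would then invoke the fact that Ind-completion upgrades to a functor on (large) symmetric monoidal $\infty$-categories: for any small symmetric monoidal stable $\infty$-category $\cC$ whose tensor product is exact in each variable, $\Ind(\cC)$ carries an (essentially unique) presentably symmetric monoidal structure extending that of $\cC$ and such that $\otimes$ preserves colimits separately, and the Yoneda inclusion $y$ refines to a symmetric monoidal functor (Lurie, \emph{Higher Algebra} 4.8.1.10). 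Applied to $\cC = \KK_{\sepa}$ this equips $\KK$ with a presentably symmetric monoidal structure and makes $y\circ \kk_{\sepa}\colon \nCalg_{\sepa}\to \KK$ symmetric monoidal.

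For the second step I would present $\kk$ as the left Kan extension of $y\circ \kk_{\sepa}$ along $\iota\colon \nCalg_{\sepa}\hookrightarrow \nCalg$, as in the definition preceding the corollary, and argue that this extension carries a canonical symmetric monoidal refinement. Here the relevant input is the general principle (a Day convolution argument, cf.\ \cite{hinich}) that the left Kan extension of a symmetric monoidal functor along a symmetric monoidal fully faithful inclusion $\iota\colon \cA\to \cB$ with values in a symmetric monoidal target $\bF$ admits a canonical symmetric monoidal refinement provided that $\otimes_{\bF}$ commutes with the colimits computing the extension and that $\otimes_{\cB}$ is computed from $\otimes_{\cA}$ via the same colimits. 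In our situation these colimits are the filtered colimits over separable subalgebras; they commute with $\otimes_{\nCalg}$ by the first step and with $\otimes_{\KK}$ by presentability. This simultaneously establishes the symmetric monoidal refinement of $\kk$ and verifies that $\kk$ is symmetric monoidal, not merely lax.

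Finally, for the universal property, I would start from a presentably symmetric monoidal stable $\infty$-category $\bF$ and a lax symmetric monoidal functor $G\colon \nCalg\to \bF$ which is homotopy invariant, $\mathbb{K}$-stable, semi-exact, and $s$-finitary. The $s$-finitary hypothesis gives $G\simeq \mathrm{Lan}_{\iota}(G|_{\nCalg_{\sepa}})$, and $G|_{\nCalg_{\sepa}}$ is still lax symmetric monoidal, homotopy invariant, $\mathbb{K}$-stable and semi-exact. Applying the lax version of \cref{werojigwergwerfw2} (which follows from the fact that $\kk_{\sepa}$ is a symmetric monoidal Dwyer--Kan localization in the sense of \cite{hinich}) produces an essentially unique lax symmetric monoidal factorization of $G|_{\nCalg_{\sepa}}$ through $\kk_{\sepa}$ by an exact functor; the universal property of Ind-completion in the lax symmetric monoidal world then extends this uniquely to a colimit-preserving lax symmetric monoidal functor $\KK\to \bF$, and the Day convolution step of the previous paragraph identifies the resulting composition with a lax symmetric monoidal factorization of $G$ through $\kk$. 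The main obstacle will be the careful bookkeeping of compatibilities of symmetric monoidal structures through Ind-completion and left Kan extension — in particular verifying, at each stage, that the ambient tensor products commute with the colimits used to define them — since this is what allows the three universal properties ($\kk_{\sepa}$, $\Ind$, $\mathrm{Lan}_{\iota}$) to assemble into the single symmetric monoidal universal property claimed for $\kk$.
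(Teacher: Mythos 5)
Your proposal is correct and follows essentially the same route as the paper: both reduce to the separable case via the symmetric monoidal refinement of the left Kan extension along $\nCalg_{\sepa}\hookrightarrow\nCalg$ (justified by Day convolution and the identification of lax symmetric monoidal functors with commutative algebras in the functor category), combine this with the symmetric monoidal universal properties of $\kk_{\sepa}$ and of the Ind-completion, and assemble the three universal properties into one. The paper phrases this entirely as a chain of equivalences of (lax) symmetric monoidal functor categories culminating in a decorated commutative square, whereas you present it as an explicit two-step construction followed by a verification, but the mathematical content is the same.
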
 
 
There is a version of \cref{ewkorgpegerwfrefw1} for the minimal tensor product.
 \begin{rem}
 Similarly as in  \cref{rtkgogfrewfrfw}
let $\bF$ be any presentable stable $\infty$-category and consider the full subcategory
$$\Fun^{\mathrm{hi,st,se,sfin}}(\nCalg ,\bF)\subseteq \Fun (\nCalg ,\bF)$$   consisting  of homotopy invariant, $\mathbb{K}$-stable, semi-exact,   and $s$-finitary functors.   The universal property of $\kk$ from \cref{ewkorgpegerwfrefw} says that the functor  
$$\kk^{*}\colon \Fun^{L}(\KK ,\bF) \to  \Fun^{\mathrm{hi,st,se,sfin}}(\nCalg ,\bF)$$ given by  
the precomposition with $\kk $  is a well-defined equivalence, where  
$\Fun^{L} $ denote the category of  left-adjoint ({or equivalently colimit preserving}) functors between presentable stable $\infty$-categories. 
Its symmetric monoidal version in   \cref{ewkorgpegerwfrefw1} says that 
$$\kk^{*}\colon \Fun^{L}_{\otimes/\mathrm{lax}}(\KK ,\bF) \to  \Fun_{\otimes/\mathrm{lax}}^{ \mathrm{hi,st,se,sfin}}(\nCalg ,\bF)$$
is an equivalence for every  presentably  symmetric  monoidal stable $\infty$-category $\bF$. 
  \hB\end{rem}

 \begin{proof}[Derivation of   \cref{ewkorgpegerwfrefw}]
We indicate how    \cref{ewkorgpegerwfrefw} is deduced from    \cref{werojigwergwerfw1}. 
Let $\bF$ be a presentable stable $\infty$-category. 
Consider the inclusion  \begin{equation}\label{sfdvsvvsffvsfdfff}
i\colon \nCalg_{\sepa}\to \nCalg\ .
\end{equation}  
Using the pointwise formula  for left Kan extensions we first observe that
 the restriction along $i$  induces 
 an equivalence  
 \begin{equation}\label{sdafasdfadfadsfadfadfa1} i^{*}\colon \Fun^{\mathrm{sfin}}(\nCalg,\bF)\stackrel{\simeq}{\to} \Fun(\nCalg_{\sepa},\bF)\end{equation} whose inverse is given by 
 left Kan-extension functor
 \begin{equation}\label{sdafasdfadfadsfadfadfa} i_{!}\colon \Fun(\nCalg_{\sepa},\bF)\stackrel{\simeq}{\to}  \Fun^{\mathrm{sfin}}(\nCalg,\bF)\ .\end{equation}  
 Further exploiting the pointwise formula we then observe that
  the equivalence  \eqref{sdafasdfadfadsfadfadfa} restricts 
 to an equivalence between   full functor subcategories
\begin{equation}\label{sdfvsdfvfvsvsfdvsv}
i^* \colon \Fun^{\mathrm{hi,st,se,sfin}} (\nCalg,\bF) \stackrel{\simeq}{\to} \Fun^{\mathrm{hi,st,se}} (\nCalg_\sepa,\bF)\ .
\end{equation} 
We now consider the square
 $$\xymatrix{ \Fun^{\mathrm{hi,st,se,sfin}} (\nCalg,\bF)  \ar[d]^{i^{*}}_{\simeq}&\ar[l]_-{\kk^{*} }\Fun^{L}(\KK,\bF)\ar[d]^{y^{*}}_{\simeq}\\\Fun^{\mathrm{hi,st,se}} (\nCalg_\sepa,\bF )&\ar[l]^-{\simeq}_-{\kk^{*}_{\sepa}}\Fun^{\mathrm{ex}}(\KK_{\sepa},\bF)}\ ,$$
 where the right vertical equivalence is given by the universal property of $y$ in \eqref{fvsdfvvvsdvvfs}, and the lower horizontal equivalence is given by   \cref{werojigwergwerfw1}, compare with \eqref{sdvdscadscasdcds}.
  We conclude that $\kk^{*}$ is an equivalence.
 \end{proof}
 
 \begin{proof}[Derivation of   \cref{ewkorgpegerwfrefw1}]
We now employ   \cref{werojigwergwerfw2} and argue similarly as in the derivation of  \cref{ewkorgpegerwfrefw}.
We use the general fact that if $\bA,\bB$ are symmetric monoidal $\infty$-categories such that
the Day convolution structure
on $\Fun(\bA,\bB)$ exists, then 
we have an equivalence
$$\Fun^{ }_{\mathrm{lax}}(\bA,\bB)\simeq \CAlg(\Fun(\bA,\bB)),$$
{see \cite[\S 2.2.6]{HA}.}
 
 Let $\bF$ be a presentable symmetric monoidal stable $\infty$-category.
 Since $i$ in \eqref{sfdvsvvsffvsfdfff}   is symmetric monoidal, we deduce that the left Kan extension functor  $ i_!$ in \eqref{sdafasdfadfadsfadfadfa} 
canonically refines  to a symmetric monoidal  functor. Consequently so does its inverse   $i^{*}$ from \eqref{sdafasdfadfadsfadfadfa1}.
The symmetric monoidal equivalence  
$i^{*}$ therefore induces an equivalence between the $\infty$-categories of commutative algebra objects in the functor categories, 
or equivalently, an equivalence
$$i^{*}\colon \Fun_{\otimes/\mathrm{lax}}^{\ \mathrm{sfin}}(\nCalg,\bF)\stackrel{\simeq}{\to}  \Fun^{ }_{\otimes/\mathrm{lax}} (\nCalg_{\sepa},\bF)\ .$$
We now use  the equivalence \eqref{sdfvsdfvfvsvsfdvsv}  in order to deduce the equivalence
$$i^{*} \colon \Fun^{ \mathrm{hi,st,se,sfin}}_{\otimes/\mathrm{lax}}(\nCalg,\bF)\stackrel{\simeq}{\to} \Fun^{ \mathrm{hi,st,se}}_{\otimes/\mathrm{lax}}(\nCalg_\sepa,\bF)\ .$$
The assertion of    \cref{ewkorgpegerwfrefw1}  follows from the decorated commutative square
 $$\xymatrix{ \Fun^{ \mathrm{hi,st,se,sfin}}_{\otimes/\mathrm{lax}} (\nCalg,\bF)  \ar[d]^{i^{*}}_{\simeq}&\ar[l]_-{\kk^{*} } \Fun^{\otimes,L}_{\otimes/\mathrm{lax}}(\KK,\bF)\ar[d]^{y^{*}}_{\simeq}\\\Fun^{ \mathrm{hi,st,se}}_{\otimes/\mathrm{lax}} (\nCalg_\sepa,\bF )&\ar[l]^-{\simeq}_-{\kk^{*}_{\sepa}}\Fun^{ \mathrm{ex}}_{\otimes/\mathrm{lax}}(\KK_{\sepa},\bF)}\ ,$$
 where the right vertical equivalence is given by a corresponding symmetric monoidal variant of the universal property of $y$ in \eqref{fvsdfvvvsdvvfs}, and the lower horizontal equivalence is given by  \cref{werojigwergwerfw2}, compare with \eqref{sdvdscadscasdcds1}. 
\end{proof}

 { {From now on we will use the notation
\[ \mathrm{KK}(-,-) := \map_{\KK}(-,-) \colon \KK^\op \times \KK \to \Sp, \quad  
 KK(-,-) :=  {(\kk^{\op}\times \kk)^*\mathrm{KK}(-,-)} \]  }

 Since $\kk(\C)$ is the tensor unit of the  stable symmetric monoidal $\infty$-category $\KK$ we get a commutative  algebra $\mathrm{KK}(\kk(\C),\kk(\C))$
in $\Sp$  which turns out to be equivalent to $\KU$.
   In view of   \cref{qrijgfqowefdwefqwefqwedqwed} below, this is compatible with our previous definition of $\KU$  as $K(\C)$.  
We define the functor  \begin{equation}\label{fjeiofjewrfredew}
{\K :=}\mathrm{KK}(\kk(\C),-)\colon \KK\to \Sp\ .
\end{equation}
It is the right-adjoint of
 {an adjunction}
\begin{equation}\label{eq:adjunction}
 - \otimes_{\KU} \kk(\C) \colon \Mod(\KU) \rightleftarrows \KK \colon  \K
\end{equation}
 {where the left adjoint is canonically symmetric monoidal, see e.g.\ \cite[Construction 5.23]{MNN}. As a consequence, $\KK$ is a commutative $\Mod(\KU)$-algebra in $\mathrm{Pr}^{L}_{\mathrm{st}}$. In particular, it is canonically endowed with the structure of a $\Mod(\KU)$-module in $\mathrm{Pr}^{L}_{\mathrm{st}}$.  Such presentable $\infty$-categories are called $\KU$-linear. 
  As a consequence of $\KU$-linearity, the mapping spectrum functor refines to a functor 
  $$\mathrm{KK}(-,-)\colon \KK^{\op}\otimes \KK \to \Mod(\KU)\ .$$
   }

By construction, for $A,B$ in $\nCalg_\sepa$, we obtain
 $$KK_{*}(A,B)\cong \pi_{*}KK(A,B)$$
in $\Mod(\KU_{*})$, i.e.\ the homotopy groups of the $KK$-spectrum $KK(A,B)$ recover the classical $KK$-groups of Kasparov.

\subsection{$K$-theory from $KK$-theory}\label{sec:3.3} In this section we  finish the construction of the spec\-trum-valued
 $K$-theory  functor  from \cref{wekojgwerferfwe}.
 {As the right adjoint of a symmetric monoidal functor,} the functor $\K$ from \eqref{fjeiofjewrfredew} 
refines to a lax symmetric monoidal functor
 $\KK\to \Mod(\KU)$.
 \begin{definition}\label{qrijgfqowefdwefqwefqwedqwed}
 We define the lax symmetric monoidal spectrum-valued $K$-theory functor as the composition 
 $$K:= \K\circ \kk  \colon \nCalg\to \Mod(\KU)\ .$$
 \end{definition}

\begin{cor}\label{wetkjgowergwerfw9} The functor $K$  is homotopy invariant, $\mathbb{K}$-stable,  semi-exact and $s$-finitary. Furthermore,  
we have a canonical isomorphism of  functors  $\pi_{*}  K \cong  K_{*}\colon\nCalg\to \Ab^{\Z}$.  \end{cor}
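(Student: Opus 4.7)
The plan is to derive all four properties of $K$ from the corresponding properties of $\kk$ established in \cref{ewkorgpegerwfrefw}, combined with favourable properties of the functor $\K = \mathrm{KK}(\kk(\C),-)\colon \KK \to \Mod(\KU)$.

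First I would observe that $\K$, being (essentially) a mapping spectrum functor out of a stable $\infty$-category, is exact: it preserves all limits, and in particular fibre sequences. Consequently, $K = \K\circ\kk$ directly inherits homotopy invariance, $\mathbb{K}$-stability, and semi-exactness from $\kk$, since each of these properties is formulated as the condition that certain diagrams (constant homotopies, upper corner maps, semi-split extensions) are sent to equivalences or fibre sequences, and the image under $\kk$ is already of this form.

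For $s$-finitariness, the key point is that $\K$ preserves filtered colimits, which in turn amounts to showing that $\kk(\C)$ is a compact object of $\KK$. This is essentially by construction: $\C$ is separable, so $\kk(\C) \simeq y(\kk_\sepa(\C))$ under the canonical functor $y\colon \KK_\sepa \to \KK$ of \eqref{fvsdfvvvsdvvfs}, which identifies $\KK_\sepa$ with the full subcategory of compact objects of $\KK = \Ind(\KK_\sepa)$. Combining this with the $s$-finitariness of $\kk$ from \cref{ewkorgpegerwfrefw}, we conclude that $K$ is $s$-finitary.

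Finally, to identify $\pi_* K$ with $K_*$, I would first handle the separable case and then extend. For $A$ in $\nCalg_\sepa$, by construction
\[
\pi_* K(A) = \pi_* \mathrm{KK}(\kk(\C), \kk(A)) = KK_*(\C, A),
\]
and \cref{erkgjowergwrfwerfw} supplies a canonical isomorphism $KK_*(\C,-) \cong K_*(-)$ on $\nCalg_\sepa$ (as lax symmetric monoidal functors, which in particular gives the asserted isomorphism to $\Ab^\Z$). To extend to all of $\nCalg$, I would invoke $s$-finitariness of both sides: $\pi_* K$ is $s$-finitary by the previous paragraph together with the fact that $\pi_*\colon \Sp \to \Ab^\Z$ preserves filtered colimits, while the classical $K_*$ is $s$-finitary by property~(vi) recalled in the introduction. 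The main (mild) obstacle is bookkeeping to ensure that the natural transformation obtained by passing to filtered colimits over separable subalgebras $A' \subseteq_\sepa A$ is well-defined and independent of choices; this is automatic since both functors are canonically presented as such left Kan extensions from $\nCalg_\sepa$.
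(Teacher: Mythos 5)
Your proposal is correct and follows essentially the same route as the paper: the first three properties are inherited from $\kk$ because $\map_{\KK}(\kk(\C),-)$ preserves limits, $s$-finitariness uses compactness of $\kk(\C)$ in $\KK=\Ind(\KK_{\sepa})$, and the identification $\pi_{*}K\cong K_{*}$ is obtained from \cref{erkgjowergwrfwerfw} on separable algebras and then extended by observing that both $\pi_{*}K$ and $K_{*}$ are $s$-finitary. No gaps.
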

\begin{proof}
We use that $\kk$   is  homotopy invariant, $\mathbb{K}$-stable,  semi-exact and $s$-finitary.
We then conclude  the corresponding properties of $K$. In order to see semi-exactness we use  
 the general fact  that for any stable $\infty$-category $\cC$ and object $A$  the functor
$\map_{\cC}(A,-):\cC\to \Sp$ preserves limits. To see that $K$ is $s$-finitary we furthermore use that $\K$ (see \eqref{fjeiofjewrfredew}) preserves filtered colimits since
$\kk(\C)$ is a compact object of $\KK$.

We claim that $K_{*}$ and $\pi_{*}K$ are $s$-finitary. Then the desired isomorphism $\pi_{*}  K \cong  K_{*}$ is  induced via left Kan-extension by the corresponding isomorphism  given by  \cref{erkgjowergwrfwerfw} for the restrictions of these functors to $\nCalg_{\sepa}$.

Since $K$ is $s$-finitary as shown above and $\pi_{*}$ preserves filtered colimits we see that   $\pi_{*}  K$ is $s$-finitary. In order to see that
$K_{*}$ is $s$-finitray we use that 
 $K_{*}$ preserves filtered colimits and $\colim_{A'\subseteq A}A'\cong A$, where $A'$ runs over the separable subalgebras of $A$.  
\end{proof}

 {
\begin{rem}
As shown in   \cref{wekojgwerferfwe}, the functor
$K$ is actually exact, i.e., it sends all short exact sequences to fibre sequences. This 
property does not directly follow from the construction via $KK$-theory but rather uses the long-exact sequences of $K$-theory groups known 
from the classical approach. 
Alternatively one could use an $\infty$-categorical version of $E$-theory in order to construct $K$ as in \cite{Bunke:2023aa}.
\hB
\end{rem}
}

The following proposition shows that our model of spectrum-valued  $C^{*}$-algebra $K$-theory is canonically equivalent to any other possible model.

\begin{prop}\label{werjigoegsfg}
Let $F\colon\nCalg\to \Sp$ be a homotopy invariant, $\mathbb{K}$-stable, semi-exact and s-finitary functor.  {Then any isomorphism
$u_{0}\colon \pi_{0}K(-)\cong \pi_{0}F(-)$ on $\nCalg_{\sepa}$ canonically refines to an equivalence
$F\simeq K$ of functors. }
\end{prop}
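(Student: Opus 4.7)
The strategy is to exploit the universal property of $\kk$ together with a Yoneda-type argument in the stable $\infty$-category $\KK$, and then to promote the $\pi_0$-isomorphism to a full equivalence via Bott periodicity. First I would use \cref{ewkorgpegerwfrefw} to factor $F \simeq \bar F \circ \kk$ for a (essentially unique) colimit-preserving, in particular exact, functor $\bar F \colon \KK \to \Sp$. The problem thereby reduces to producing a canonical equivalence $\K \simeq \bar F$ of exact functors on $\KK$, where $\K = \map_{\KK}(\kk(\C), -)$ as in \eqref{fjeiofjewrfredew}.

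To construct a canonical natural transformation $\eta \colon \K \to \bar F$, I would first extend $u_0$ from $\nCalg_\sepa$ to $\KK^0_\sepa$. Both $\pi_0 K$ and $\pi_0 F$ are homotopy invariant, $\mathbb{K}$-stable, and split-exact (split-exact sequences of $C^*$-algebras are semi-split, and semi-exactness of a spectrum-valued functor forces split-exactness on $\pi_0$), so Higson's universal property \cref{weriojgweorgewrgfwerf9} provides essentially unique factorizations through $\kk^0_\sepa$. Consequently $u_0$ lifts uniquely to a natural isomorphism $\pi_0 \K \simeq \pi_0 \bar F$ of functors on $\KK^0_\sepa$, still denoted $u_0$. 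Since $\K$ is corepresented by the tensor unit $\kk(\C)$, the stable Yoneda lemma identifies $\Nat(\K, \bar F) \simeq \bar F(\kk(\C)) \simeq F(\C)$ in $\Sp$. I would then let $\eta$ be the transformation corresponding to the element $u_0(1_\C) \in \pi_0 F(\C)$. The Yoneda formula $\pi_0 \eta_A(x) = \bar F(x)_*(u_0(1_\C))$, combined with naturality of the extended $u_0$ applied to $x \colon \kk(\C) \to A$, then shows $\pi_0 \eta = u_0$ on $\KK^0_\sepa$.

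It remains to show that $\eta_A$ is an equivalence for every $A \in \KK$. Both $\K$ and $\bar F$ preserve filtered colimits ($\K$ because $\kk(\C)$ is compact in $\KK$; $\bar F$ because it is colimit-preserving, corresponding to $F$ being $s$-finitary), so it suffices to take $A = \kk_\sepa(A_0)$ for $A_0$ separable. By $\KU$-linearity of $\KK$, the Bott element yields a natural equivalence $A \simeq \Sigma^{-2} A$; applying the exact functors $\K$ and $\bar F$ and invoking naturality of $\eta$ reduces $\pi_{2n} \eta_A$ to $\pi_0 \eta_A = u_0(A_0)$, which is an isomorphism. In odd degrees, the semi-split cone sequence $0 \to SA_0 \to CA_0 \to A_0 \to 0$ (with contractible cone) together with semi-exactness of both $\K$ and $\bar F$ gives $\K(SA_0) \simeq \Omega \K(A_0)$ and likewise for $\bar F$, so $\pi_{2n+1} \eta_A$ reduces to $\pi_{2n} \eta_{\kk_\sepa(SA_0)}$, which is again an isomorphism by the previous case.

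The main obstacle I expect lies in the second step: verifying that the natural isomorphism $u_0$, initially defined only along $*$-homomorphisms, extends uniquely to a natural isomorphism along all $KK$-morphisms, and that this extension coincides with the $\pi_0$ of the Yoneda-produced $\eta$. Both facts rest on Higson's characterization of $\KK^0_\sepa$ and on the semi-exactness-to-split-exactness passage for spectrum-valued functors; once these are in place, the Bott-periodicity argument in the final step is essentially formal.
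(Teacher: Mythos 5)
Your proposal is correct and follows essentially the same route as the paper's proof: factor $F$ through $\KK$ via the universal property of $\kk$, produce the transformation from the element $u_{0,\C}(1)$ by the Yoneda lemma, and verify it is an equivalence by reducing to $\pi_{0}$ on separable algebras. The only difference is that you spell out two steps the paper compresses — the identification $\pi_{0}\eta = u_{0}$ via Higson's universal property applied to $\pi_{0}K$ and $\pi_{0}F$, and the reduction of all homotopy groups to $\pi_{0}$ via Bott periodicity and the suspension sequence — both of which are correct and consistent with what the paper's ``by construction'' and ``since both functors are exact'' implicitly use.
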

\begin{proof}
We use the equivalence \begin{equation}\label{afadfadsfdsfa}
\kk^{*}\colon \Fun^{L}(\KK , {\Sp}) \stackrel{\simeq}{\to}  \Fun^{\mathrm{hi,st,se,sfin}}(\nCalg , {\Sp})
\end{equation}
given by \cref{ewkorgpegerwfrefw}.
We let $\tilde F$ in $\Fun^{L}(\KK ,\Sp) $ be a preimage under $\kk^{*}$ of $F$.
In view of the   \cref{qrijgfqowefdwefqwefqwedqwed} of $K$ 
and the Yoneda lemma we have an equivalence of  spaces
\begin{equation}\label{dacdscacadscda}
\Nat(K,F)\simeq \Nat(\K,\tilde F)\simeq  \Omega^{\infty}\tilde F(\kk(\C))\simeq \Omega^{\infty}F(\C)\ .
\end{equation}  The isomorphism $u_{0,\C}\colon \pi_{0}K(\C)\cong \pi_{0}F(\C)$ is determined by the image $u_{0,\C}(1_{\KU_{*}})$ in  $\pi_{0} \Omega^{\infty}F(\C)$.  The element $u_{0,\C}(1_{\KU_{*}})$ therefore determines
a natural transformation $\hat u\colon K\to F$ uniquely up to equivalence.
It remains to show that $\hat u$ is an equivalence inducing $u_{0}$ in zero homotopy on separable algebras.  
It suffices to show that 
 $\tilde u\colon \K \to \tilde F$  corresponding to $\hat u$ under the first equivalence in \eqref{dacdscacadscda} is an equivalence.    Since both functors are exact and {preserve filtered colimits, }
 it suffices to check that the induced map
 $\pi_{0}(\tilde u_{\kk(A)})\colon \pi_{0}\K(\kk(A))\to \pi_{0}\tilde F(\kk(A))$
 is an equivalence for every $A$ in $\nCalg_{\sepa}$.  But by construction,
 $\pi_{0}(\tilde u_{\kk(A)})\cong u_{0,A}$, and the latter is an isomorphism by assumption.
\end{proof}

\cref{qrijgfqowefdwefqwefqwedqwed} of the $K$-theory functor
  together with the universal property of $\kk$ formulated in   \cref{ewkorgpegerwfrefw} implies a characterization of the symmetric monoidal functor $\K$ by a universal property.

  \begin{prop}\label{gqerhgperfqrferfwrf}
  The $K$-theory functor $K\colon \nCalg\to \Sp$ is initial among lax symmetric monoidal functors from $\nCalg$ to  $\Sp$ which are
  homotopy invariant, $\mathbb{K}$-stable, semi-exact and $s$-finitary.
    \end{prop}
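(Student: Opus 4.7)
The plan is to reduce the statement to a Day convolution argument via the universal property of $\kk$. First, the lax symmetric monoidal universal property of $\kk$ from \cref{ewkorgpegerwfrefw1}, applied with target the presentably symmetric monoidal stable $\infty$-category $\Sp$, furnishes an equivalence
\[
\kk^{\ast}\colon \Fun^{L}_{\mathrm{lax}}(\KK, \Sp) \xrightarrow{\simeq} \Fun^{\mathrm{hi,st,se,sfin}}_{\mathrm{lax}}(\nCalg, \Sp)
\]
that sends the lax symmetric monoidal functor $\K = \mathrm{KK}(\kk(\C), -)$ to $\K \circ \kk = K$. Consequently, the proposition reduces to showing that $\K$ is an initial object of $\Fun^{L}_{\mathrm{lax}}(\KK, \Sp)$.

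Next, I would invoke the Day convolution identification already used in the proof of \cref{ewkorgpegerwfrefw1} (following \cite[\S 2.2.6]{HA}): there is a natural equivalence
\[
\Fun^{L}_{\mathrm{lax}}(\KK, \Sp) \simeq \CAlg\bigl(\Fun^{L}(\KK, \Sp)\bigr),
\]
where the right-hand side is taken with respect to the Day convolution symmetric monoidal structure. A direct coend-Yoneda calculation identifies the tensor unit of this Day convolution with the co-represented functor $\mathrm{KK}(\kk(\C), -) = \K$: for any $F \in \Fun^{L}(\KK, \Sp)$, one has
\[
(F \otimes_{\mathrm{Day}} \K)(X) \simeq \int^{Y} F(Y) \otimes \KK(Y \otimes \kk(\C), X) \simeq \int^{Y} F(Y) \otimes \KK(Y, X) \simeq F(X),
\]
using that $\kk(\C)$ is the tensor unit of $\KK$ and twice the Yoneda lemma.

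Since the tensor unit of any symmetric monoidal $\infty$-category is the initial object of its $\infty$-category of commutative algebras, it follows that $\K$ is initial in $\CAlg(\Fun^{L}(\KK, \Sp))$, and hence in $\Fun^{L}_{\mathrm{lax}}(\KK, \Sp)$. Applying the reduction of the first step, $K$ is correspondingly initial in $\Fun^{\mathrm{hi,st,se,sfin}}_{\mathrm{lax}}(\nCalg, \Sp)$. The only point requiring genuine care is the set-theoretic handling of Day convolution on the large functor category $\Fun^{L}(\KK, \Sp)$; this is done by restriction to the small sub-$\infty$-category $\KK_{\sepa}$ of compact objects via $\KK = \Ind(\KK_{\sepa})$, on which Day convolution is standard.
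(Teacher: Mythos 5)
Your proof is correct and takes essentially the same route as the paper: both reduce via \cref{ewkorgpegerwfrefw1} to showing that $\K$ is initial in $\Fun^{L}_{\mathrm{lax}}(\KK,\Sp)$, and both obtain this from the fact that the functor corepresented by the tensor unit is the unit of the (appropriately localized) Day convolution and hence initial among its commutative algebra objects. The only difference is that the paper simply cites \cite[Corollary 6.8 (3)]{Nikolaus} for this multiplicative Yoneda lemma, whereas you sketch the coend computation by hand (which is where the care you mention is really needed: the identification of the spectrally corepresented functor $\map_{\KK}(\beins,-)$ with the Day unit holds only after passing to exact/colimit-preserving functors, which is precisely the content of the cited result).
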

\begin{proof}
By Corollary \ref{ewkorgpegerwfrefw1} we have an equivalence
$$\kk^{*}\colon \Fun^{ L}_{\mathrm{lax}}(\KK,\Sp)\stackrel{\simeq}{\to} \Fun_{ \mathrm{lax}}^{ \mathrm{hi,st,se,sfin}}(\nCalg,\Sp)\ ,$$ and $K {=} \kk^{*} \K$ by Definition \ref{qrijgfqowefdwefqwefqwedqwed}.
Since the functor $\K$ is   corepresented by the tensor unit, it is initial in
$\Fun^{L}_{\mathrm{lax}}(\KK,\Sp)$ by 
\cite[Corollary 6.8 (3)]{Nikolaus}. 
Consequently, $K$ has the same property in  $\Fun_{\mathrm{lax}}^{  \mathrm{hi,st,se,sfin}}(\nCalg,\Sp)$.
  \end{proof}

  \section{Applications of spectrum-valued $KK$-theory}
In this section we illustrate  the usability of the stable $\infty$-categorical version of $\KK$ by giving several examples.

\subsection{Idempotents and units}\label{wrthijpwgergwergwrf} 
In this section we show that the usage of the homotopical version of $KK$ can simplify 
the discussion of unit spectra {for strongly selfabsorbing $C^{*}$-algebras} in  \cite{DP3}, \cite{DP1}, \cite{DP2}  considerably, {avoiding point-set constructions inside some model of spectra.}

We start with a general remark on monoid structures on mapping spaces.
  Recall that $\Mon(\cC)$ and $\CMon(\cC)$ denote the categories of monoids and commutative monoids in a category $\cC$ with cartesian products. In case $\cC=\Spc$ we omit the argument and simply write $\Mon$ and $\CMon$.
  For any object $C$ in an $\infty$-category $\cC$ the mapping space $\Map_{\cC}(C,C)$ has 
 a monoid structure given by composition.
 Let $(\cC,\otimes,\beins)$ be a symmetric monoidal $\infty$-category.
\begin{lem}\label{Lemma:monoids}\mbox{}
\begin{enumerate}
\item $\Map_\cC(\beins,\beins)$ is canonically an object of $\CMon$ with the (multiplication) structure induced from the commutative algebra and coalgebra structures on $\beins$.
\item The identity of $\Map_{\cC}(\beins,\beins)$ refines to an  equivalence of monoids between the multiplication structure and the composition structure.
\end{enumerate}

\end{lem}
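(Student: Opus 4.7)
The plan is to produce the commutative monoid structure of (1) by pushing the canonical commutative algebra structure on $\beins$ through a lax symmetric monoidal functor, and then to identify the resulting monoid with composition in (2) via an $\infty$-categorical Eckmann--Hilton argument.

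For (1), I would start from the fact recalled in \cref{wtijgowergferwgwref} that the unit $\beins$ refines essentially uniquely to an object of $\CAlg(\cC)$. The corepresented functor $\Map_{\cC}(\beins,-) \colon \cC \to \Spc$ carries a canonical lax symmetric monoidal structure whose binary comparison map sends $(f \colon \beins \to A,\ g \colon \beins \to B)$ to the composite $\beins \simeq \beins \otimes \beins \xrightarrow{f \otimes g} A \otimes B$, where the first arrow is the inverse unit isomorphism; this is precisely where the ``coalgebra structure on $\beins$'' referred to in the statement enters. Applying the induced functor $\CAlg(\cC) \to \CAlg(\Spc) = \CMon$ to $\beins$ then produces the desired commutative monoid, with explicit binary multiplication
\[ \Map_{\cC}(\beins,\beins) \times \Map_{\cC}(\beins,\beins) \xrightarrow{\otimes} \Map_{\cC}(\beins \otimes \beins, \beins \otimes \beins) \xrightarrow{\simeq} \Map_{\cC}(\beins,\beins)\ . \]

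For (2), write $\boxtimes$ for this tensor multiplication and $\circ$ for composition. Both are $\E_1$-structures on the space $\Map_{\cC}(\beins,\beins)$ sharing the common unit $\id_{\beins}$, since naturality of the unit isomorphism identifies $\id_{\beins} \otimes \id_{\beins}$ with $\id_{\beins}$. Bifunctoriality of $\otimes$ supplies the interchange equivalence
\[ (f \boxtimes g) \circ (f' \boxtimes g') \simeq (f \circ f') \boxtimes (g \circ g')\ , \]
which says that $\circ$ is a homomorphism for $\boxtimes$ and vice versa. To conclude coherently, I would invoke Dunn--Lurie additivity: the interchange data exhibits $\Map_{\cC}(\beins,\beins)$ as an $\E_2$-algebra via the equivalence $\E_1 \otimes \E_1 \simeq \E_2$, whose two underlying $\E_1$-structures are $\boxtimes$ and $\circ$, and the classical Eckmann--Hilton principle in this $\infty$-categorical form then forces them to agree.

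The main obstacle I anticipate is promoting the informal interchange law above to coherent operadic data, that is, producing the $\E_2$-structure rigorously without descending to a strict point-set model. The cleanest route is to invoke the general fact that, in any symmetric monoidal $\infty$-category $\cC$, the endomorphisms of the unit automatically form an $\E_\infty$-algebra in $\Spc$ whose underlying $\E_1$-structure is composition; this can be read off from the symmetric monoidal structure of $\cC$ restricted to the full subcategory on $\beins$, or alternatively from the endomorphism algebra formalism of \cite{HA}. Granted this input, a direct unwinding of the unit isomorphism shows that the resulting underlying $\E_1$-structure also matches $\boxtimes$ from part (1), and assertion (2) follows.
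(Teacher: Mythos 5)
Your proof of part (1) is correct and takes a genuinely different route from the paper's: you obtain the commutative monoid structure by applying the canonically lax symmetric monoidal corepresented functor $\Map_{\cC}(\beins,-)\colon\cC\to\Spc$ to $\beins\in\CAlg(\cC)$, whereas the paper extracts it from the product-preserving adjunction $\mathrm{B}\colon\Mon\rightleftarrows\Cat_{*}\colon(\cC,C)\mapsto\Map_{\cC}(C,C)$ together with the equivalences $\CMon(\Cat_{*})\simeq\CMon(\Cat)$ and $\CMon(\Mon)\simeq\CMon$. Your construction is closer to the phrasing of the statement; the paper's has the advantage of producing in one step a single object of $\CMon(\Mon)$ carrying both structures simultaneously, which is exactly what makes part (2) tractable.

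For part (2) there is a genuine gap. As you yourself anticipate, a single interchange homotopy $(f\boxtimes g)\circ(f'\boxtimes g')\simeq(f\circ f')\boxtimes(g\circ g')$ is far from the coherent $\E_{1}\otimes\E_{1}$-algebra datum needed to invoke Dunn--Lurie additivity, and your proposed repair is circular: the ``general fact'' that $\End_{\cC}(\beins)$ is an $\E_{\infty}$-algebra whose underlying $\E_{1}$-structure is composition, combined with the claim that ``a direct unwinding shows the underlying $\E_{1}$-structure also matches $\boxtimes$'', is precisely assertion (2) --- an $\E_{\infty}$-algebra has an essentially unique underlying $\E_{1}$-structure, so asserting that it is simultaneously composition and $\boxtimes$ is the statement to be proved, and the unwinding only identifies binary operations up to homotopy, not the monoid structures coherently. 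The paper closes exactly this gap: since the right adjoint $(\cC,C)\mapsto\Map_{\cC}(C,C)$ preserves products, it is canonically symmetric monoidal for the cartesian structures and hence sends a commutative monoid in $\Cat_{*}$ (i.e.\ a symmetric monoidal $\infty$-category pointed at its unit, via $\CMon(\Cat_{*})\simeq\CMon(\Cat)$) to an object of $\CMon(\Mon)$ whose image under the forgetful functor $\CMon(\Mon)\to\Mon$ is composition and whose image under $\CMon(\Mon)\simeq\CMon$ is the tensor multiplication; the two structures are then identified because the two forgetful functors $\CMon(\Mon)\to\Mon$ agree, the two maps of $\infty$-operads $\E_{1}\to\E_{1}\otimes\E_{\infty}\simeq\E_{\infty}$ being canonically equivalent. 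If you want to keep your outline, this adjunction is the concrete device that upgrades your interchange homotopy to coherent operadic data.
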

\begin{proof}
We begin with part $(1)$. First, we record that there is an adjunction 
\begin{equation}\label{frewrfrewfwfwerf}M\mapsto \mathrm{B}M :\Mon    \leftrightarrows  \Cat_{*}: (\cC,C)\mapsto \Map_{\cC}(C,C) 
\end{equation} 
where the monoid structure on $\Map_{\cC}(C,C)$ refines the composition of endomorphisms.
Both functors preserve products and are therefore canonically symmetric monoidal with respect to the cartesian monoidal structures on both categories. 
Then we note that the forgetful functor 
\begin{equation}\label{qfwepfokqpwefqwedqewdq}
\CMon(\Cat_*) \to \CMon(\Cat)
\end{equation} is an equivalence, and the latter is the category of symmetric monoidal $\infty$-categories. Under the inverse of this equivalence, a symmetric monoidal category obtains a distinguished object given by the tensor unit of the monoidal structure.

 {The composition of the symmetric monoidal right-adjoint of the adjunction  \eqref{frewrfrewfwfwerf}  with the inverse of the equivalence in \eqref{qfwepfokqpwefqwedqewdq} induces a functor
\begin{equation}\label{rfojaspfsdfdfdfa} \CMon(\Cat) \lto \CMon(\Mon)\ .
\end{equation} 
Applying $\CMon(-)$ to the forgetful functor $\Mon \to \Spc$ gives a functor \begin{equation}\label{wefqewfewdq}\CMon(\Mon)\to \CMon
\end{equation}  which is in fact an equivalence. The composition of  \eqref{rfojaspfsdfdfdfa} with \eqref{wefqewfewdq}
sends a symmetric monoidal $\infty$-category $\cC$ to the commutative monoid $\Map_\cC(\beins,\beins)$ with multiplication structure showing (1). 

To see (2), we observe that, by construction, the forgetful (forgetting the commutative monoid structure) functor \begin{equation}\label{wrqefkjqwopfewfdqewqd}\CMon(\Mon) \to  \Mon\ ,
\end{equation}  composed with \eqref{rfojaspfsdfdfdfa} sends a symmetric monoidal category $\cC$ to the monoid $\Map_\cC(\beins,\beins)$ with composition structure.}

Now we use that the two forgetful functors 
\[\CMon(\Mon) \xrightarrow{\eqref{wrqefkjqwopfewfdqewqd}} \Mon \ , \quad \CMon(\Mon) \xrightarrow{\eqref{wefqewfewdq}} \CMon \to \Mon\]
are canonically equivalent. This is, for instance, because they are given by restriction along the two canonical maps of $\infty$-operads $\E_1 \to \E_1 \otimes \E_\infty$ which are canonically equivalent since $\E_1 \otimes \E_\infty\simeq \E_{\infty}$ is a terminal $\infty$-operad. 
\end{proof}

The following is the analog of \cref{Lemma:monoids} in the context of stable categories. For each object $C$ of   a stable $\infty$-category $\cC$  the spectrum $\map_{\cC}(C,C)$ refines to an object of $\Alg(\Sp)$ with the composition structure.
 
 We recall that a stably symmetric monoidal $\infty$-category is an object of $\CAlg(\Cat^\st)$, where $\Cat^\st$ is the $\infty$-category of stable $\infty$-categories, equipped with the Lurie tensor product as symmetric monoidal structure.
Let $(\cC,\otimes,\beins)$ be a stably symmetric monoidal $\infty$-category.
\begin{lem}\label{eneferferferfe}
 \mbox{}
\begin{enumerate}
\item  $\map_{\cC}(\beins,\beins)$ is canonically an object of $\CAlg(\Sp)$.
\item  The identity of $\map_{\cC}(\beins,\beins)$ refines to equivalence of algebras for the 
multiplication structure and the composition structure.
\end{enumerate}
\end{lem}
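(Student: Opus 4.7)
The plan is to follow the structure of Lemma \ref{Lemma:monoids}, replacing spaces by spectra, monoids by $\E_1$-algebras, and $\Cat_*$ by $(\Cat^\st)_*$ throughout. I would first construct a product-preserving functor $\End \colon (\Cat^\st)_* \to \Alg(\Sp)$ sending a pointed stable $\infty$-category $(\cC, C)$ to the endomorphism ring spectrum $\map_\cC(C, C)$ with its composition $\E_1$-structure; this is the spectrally enriched counterpart of the right adjoint of \eqref{frewrfrewfwfwerf}, and it preserves products because $\map_{\cC \times \cD}((C, D), (C, D))$ decomposes as $\map_\cC(C, C) \oplus \map_\cD(D, D)$, and binary products coincide with binary sums in both $\Sp$ and $\Alg(\Sp)$.

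Applying $\CMon$ (for the cartesian monoidal structure on both sides) then yields a functor $\CMon((\Cat^\st)_*) \to \CMon(\Alg(\Sp))$. As in \eqref{qfwepfokqpwefqwedqewdq}, the forgetful $\CMon((\Cat^\st)_*) \to \CMon(\Cat^\st)$ is an equivalence, the inverse sending a symmetric monoidal stable $\infty$-category to itself equipped with its tensor unit as the distinguished object; restricting further along the forgetful $\CAlg(\Cat^\st) \to \CMon(\Cat^\st)$ (passing from the Lurie tensor product to the cartesian one) handles the stably symmetric monoidal case in the statement of the lemma. On the target side, $\CMon(\Alg(\Sp)) \simeq \CAlg(\Alg(\Sp)) \simeq \CAlg(\Sp)$ by Dunn additivity. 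Composing these yields a functor $\CAlg(\Cat^\st) \to \CAlg(\Sp)$ whose underlying spectrum is $\map_\cC(\beins, \beins)$, equipped with the multiplicative $\E_\infty$-structure inherited from the symmetric monoidal structure on $\cC$. This proves part (1).

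For part (2), both $\E_1$-structures on $\map_\cC(\beins, \beins)$ arise by forgetting from the $\E_\infty$-structure produced above: the composition structure via the outer forgetful $\CAlg(\Alg(\Sp)) \to \Alg(\Sp)$, and the multiplication structure via the composition $\CAlg(\Alg(\Sp)) \simeq \CAlg(\Sp) \to \Alg(\Sp)$. Just as in the last step of the proof of Lemma \ref{Lemma:monoids}, these two forgetful functors are canonically equivalent, being restriction along the two canonical maps $\E_1 \to \E_1 \otimes \E_\infty \simeq \E_\infty$ of $\infty$-operads, which agree because $\E_\infty$ is the terminal $\infty$-operad.

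The main obstacle I expect is the careful bookkeeping in Step 1: producing $\End$ as a functor between appropriately pointed $\infty$-categories that genuinely preserves products, and then navigating the interaction between the cartesian and Lurie tensor structures on $\Cat^\st$ so that the induced $\E_\infty$-structure really comes from the given symmetric monoidal structure on $\cC$. Once this is in place, the Dunn-additivity identification and the final Eckmann-Hilton comparison are essentially formal and run in parallel to the proof of Lemma \ref{Lemma:monoids}.
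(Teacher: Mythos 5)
Your overall template is the right one, and you correctly anticipate that the delicate point is the interaction between the cartesian and Lurie tensor structures on $\Cat^{\st}$ --- but the resolution you propose at that point does not work, and this is exactly where your argument and the paper's diverge. You build $\End\colon (\Cat^{\st})_{*}\to \Alg(\Sp)$ as a \emph{product-preserving} functor and then apply $\CMon$ for the cartesian structures on both sides. The resulting category $\CMon(\Alg(\Sp))$ is taken with respect to the \emph{cartesian} product of $\E_{1}$-rings, and this is essentially trivial: the terminal object of $\Alg(\Sp)$ is the zero ring, and a commutative monoid structure requires a unit map from it, i.e.\ a unital algebra map $0\to R$, which forces $1_{R}=0$ and hence $R\simeq 0$. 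In particular the identification ``$\CMon(\Alg(\Sp))\simeq \CAlg(\Alg(\Sp))\simeq\CAlg(\Sp)$ by Dunn additivity'' conflates two different symmetric monoidal structures on $\Alg(\Sp)$: Dunn additivity gives $\CAlg(\Alg(\Sp))\simeq\CAlg(\Sp)$ only when the outer $\CAlg$ is formed with respect to the smash-product structure, not the cartesian one. More conceptually, in the stable setting product-preservation carries no multiplicative information at all (every spectrum is an $\E_{\infty}$-monoid for $\times$ in a unique way), so no route through cartesian $\CMon$'s can produce the multiplicative $\E_{\infty}$-ring structure on $\map_{\cC}(\beins,\beins)$; what is needed is precisely the bilinearity of $\otimes_{\cC}$, which is encoded by the Lurie tensor product and not by the cartesian one. (A smaller slip: binary products do \emph{not} coincide with binary coproducts in $\Alg(\Sp)$; what you actually need there is only that the forgetful functor $\Alg(\Sp)\to\Sp$ preserves products, which is true.)

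The paper's proof repairs exactly this step: it uses the adjunction $\Alg(\Sp)\rightleftarrows(\Cat^{\st})_{*}$ whose left adjoint $R\mapsto \Mod^{\mathrm{f}}(R)$ is \emph{symmetric monoidal} for the smash product on $\Alg(\Sp)$ and the Lurie tensor product on $(\Cat^{\st})_{*}$. The right adjoint $(\cC,C)\mapsto\map_{\cC}(C,C)$ is then canonically \emph{lax symmetric monoidal} for these structures, and applying $\CAlg$ (for the tensor structures throughout) yields the functor $\CAlg(\Cat^{\st})\to\CAlg(\Alg(\Sp))\simeq\CAlg(\Sp)$, after which Dunn additivity and the Eckmann--Hilton comparison of the two forgetful functors to $\Alg(\Sp)$ proceed exactly as you describe for part (2). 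So your part (2) and the final comparison of operad maps $\E_{1}\to\E_{1}\otimes\E_{\infty}$ are fine once part (1) is obtained this way; the missing idea is to replace ``product-preserving, hence apply $\CMon$'' by ``lax symmetric monoidal for the Lurie tensor product, hence apply $\CAlg$.''
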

\begin{proof}
The strategy is the same as before. This time, we make use of the adjunction
\[  \Alg(\Sp) \rightleftarrows  (\Cat^\st)_{*}  \]
where the left adjoint  sends an algebra $R$ to $\Mod^{\mathrm{f}}(R)$, the small stable sub-category of $\Mod(R)$ generated by $R$, and the right-adjoint  sends $(\cC,C)$ to the mapping spectrum $\map_\cC(C,C)$. The left adjoint refines canonically to a symmetric monoidal functor (in fact, composed with the idempotent completion it is a symmetric monoidal equivalence onto the full subcategory of $(\Cat^\st)_{*}$ consisting of those pointed stable categories $(\cC,C)$ where $\cC$ is idempotent complete and $C$ is a generator of $\cC$). Consequently, the right adjoint refines canonically to a lax symmetric monoidal functor. We therefore again obtain a functor
\[  \CAlg(\Cat^\st) \lto \CAlg(\Alg(\Sp)).\]
The rest of the argument is the same as in the proof of \cref{Lemma:monoids}.
\end{proof}

Following \cite[\S 4.8.2]{HA},  {we make the following definition.
\begin{definition}
An idempotent object  in a symmetric monoidal $\infty$-category $\cC$ with tensor unit $\beins$  is a  map $\epsilon\colon \beins \to C$ such that 
  $\epsilon\otimes \id\colon C\simeq \beins \otimes C\to C\otimes C$ is an equivalence.  
\end{definition}
}
  The  inverse of this map is  the multiplication map of a canonical commutative algebra structure on $C$ with unit  {map given by} $\epsilon$. The functor $L_{C}:=-\otimes C\colon \cC\to \cC$   is 
   a Bousfield localization with unit transformation
$\id_{\bC}\xrightarrow{-\otimes \epsilon} -\otimes C$.

We now assume that $\cC$ is stable. Then, as discussed earlier, for any object $C$ of $\cC$, the spectrum $\map_{\cC}(C,C) $ naturally refines to an associative algebra in $\Sp$
with respect to composition. If $C$ is a commutative algebra, then
the spectrum $\map_{\cC}(\beins,C)$ canonically refines to a commutative algebra in $\Sp$. If the unit $\epsilon\colon \beins\to C$ is an idempotent, then we have an equivalence
  $$\epsilon^{*}\colon\map_{\cC}(C,C) \xrightarrow{\simeq} \map_{\cC}(\beins,C)\ ,$$
  of associative algebras which shows that $\map_{\cC}(C,C) $ refines to a commutative algebra.
  In order to see this we apply \cref{eneferferferfe} to the symmetric monoidal $\infty$-category $(L_{C}\cC,C,\otimes)$.

\begin{definition}
A unital $C^{*}$-algebra $A$ is called homotopically selfabsorbing if
the map $A\to A\otimes A$ given by $a\mapsto 1_{A}\otimes a$ is a homotopy equivalence.
Equivalently, we require that the unit map $L_{h}(\C)\to L_{h}(A)$  is an idempotent in the localization $ 
\nCalg_{\sepa,h}$  {discussed in} \cref{sequence-of-localizations}.
\end{definition}

Consequently, a homotopically selfabsorbing $C^{*}$-algebra canonically refines to
 a commutative algebra object in $\nCalg_{\sepa,h}$.
Note that according to the standing hypothesis of the present survey the notion of homotopically selfabsorbing is defined with respect to the  maximal tensor product. But in order to connect with the $C^{*}$-literature  in the following we use the analogue for the  minimal tensor product. In this case, examples of   homotopically selfabsorbing $C^{*}$-algebras are given by 
 asymptotically selfabsorbing, or more generally, by separable strongly selfabsorbing $C^{*}$-algebras, as we shall explain in the following
\begin{ex}\label{wtiuhgwergwerfwerf}
{We recall that an} asymptotically selfabsorbing $C^{*}$-algebra is a unital $C^{*}$-algebra 
such that there exists an isomorphism $\phi\colon A\xrightarrow{\cong} A\otimes_{\min} A$ and  
a strictly continuous map $u\colon (0,1]\to U(A\otimes_{\min} A)$ such that
$\lim_{t\to 0} 
u_{t} \phi u_{t}^{*}= (1_{A}\otimes \id_{A})$ \cite[Def.\ 1.1]{DW}. {In short this means that the right tensor embedding is asymptotically unitarily equivalent to an isomorphism.} The family of homomorphisms 
 $u_{t} \phi u_{t}^{*}$ extends by continuity to a homotopy from $(1_{A}\otimes \id_{A})$ to  an isomorphism.
An  asymptotically selfabsorbing $C^{*}$-algebra is therefore  homotopically selfabsorbing.
 
A strongly selfabsorbing $C^{*}$-algebra is a unital $C^{*}$-algebra 
such that there exists an isomorphism $\phi\colon A\xrightarrow{\cong} A\otimes_{\min} A$ and a sequence 
 $ (u_{n})_{n\in \nat}$ in $U(A\otimes_{\min} A)$ such that
$\lim_{n\to \infty} 
u_{n} \phi u_{n}^{*}= (1_{A}\otimes \id_{A})$ \cite[Def.\ 1.1]{DW}{, i.e.\ the right tensor embedding is approximately unitarily equivalent to an isomorphism.}
By  \cite[Thm.\ 2.2]{DW}  and  \cite[Rem.\ 3.3]{MR2784504} 
 any  separable strongly selfabsorbing $C^{*}$-algebra is asymptotically selfabsorbing, and hence homotopically selfabsorbing. 
{Therefore, the Cuntz algebra $\mathcal{O}_{\infty}$ and the Jiang--Su algebra $\mathcal{Z}$ are homotopically selfabsorbing.}
 \hB 
 \end{ex}
 
 There are also non-unital $C^{*}$-algebras which canonically give rise to idempotents in $\nCalg_{\sepa,h}$:
 \begin{ex} The left upper corner inclusion $e\colon\C\to \mathbb{K}$ induces an idempotent object in $\nCalg_{\sepa,h}$.
Indeed, the map $\mathbb{K}\to \mathbb{K}\otimes \mathbb{K}$ given by $k\mapsto e\otimes k$ is homotopic to an isomorphism {(see for example \cite[Lemma~2.4]{DP1}).}
 The Bousfield localization associated to this idempotent is the localization $L_{K}$ mentioned in \cref{sequence-of-localizations}.
\hB
\end{ex}

Recall from \cite[Cor.\ 3.7]{Bunke:2023aa} that the mapping spaces in $\nCalg_{h}$ are presented via $\ell\colon\Top\to \Spc$ from  \eqref{sfdgsfdgfsgsfdg} by the topological   spaces $\underline{\Hom}(A,B)$
of homomorphisms from $A$ to $B$ with the point-norm topology.
In view of  \eqref{gregwregfwfrfwrfwfwrfrf} and \eqref{fvsdfvvvsdvvfs} we have a symmetric monoidal functor
$$\kk_{h}\colon \nCalg_{\sepa,h}\xrightarrow{\Omega^{2}\circ L_{se}\circ L_{K}} \KK_{\sepa}\xrightarrow{y} \KK\ .$$
If $(\epsilon\colon \C\to A)$  is an idempotent in $ \nCalg_{\sepa,h}$, then 
$L_{h}(A)$ refines to a commutative algebra in $ \nCalg_{\sepa,h}
$. Since $\kk_{h}$ is symmetric monoidal  and $K(A)\simeq {\map_{\KK}}(\kk_{h}(\C),\kk_{h}(A))$ we get the following consequence. \begin{cor}
If $(\epsilon\colon \C\to A)$ is an idempotent in $\nCalg_{\sepa,h}$,  then
$K(A)$ has a canonical structure of a commutative algebra in $\Mod(\KU)$.
\end{cor}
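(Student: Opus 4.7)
The plan is to directly apply the general machinery for idempotent objects in stably symmetric monoidal $\infty$-categories, developed in the paragraphs immediately preceding the corollary, to the target of the symmetric monoidal functor $\kk_h\colon \nCalg_{\sepa,h}\to \KK$.

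First, since $\kk_h$ is symmetric monoidal and $\kk_h(\C)\simeq \kk(\C)$ is the tensor unit of $\KK$, the morphism $\kk_h(\epsilon)\colon \kk(\C)\to \kk_h(A)$ is an idempotent in the stably symmetric monoidal $\infty$-category $\KK$. Consequently, $\kk_h(A)$ canonically refines to a commutative algebra object in $\KK$ with unit $\kk_h(\epsilon)$, and the Bousfield localization $L_{\kk_h(A)}=-\otimes\kk_h(A)$ presents a stably symmetric monoidal $\infty$-category whose tensor unit is $\kk_h(A)$.

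Second, I will apply \cref{eneferferferfe} to the stably symmetric monoidal $\infty$-category $L_{\kk_h(A)}\KK$ to obtain a canonical commutative algebra structure on $\map_{\KK}(\kk_h(A),\kk_h(A))$. Pullback along the idempotent $\kk_h(\epsilon)$ then produces an equivalence of associative algebras
\[\epsilon^{\ast}\colon\map_{\KK}(\kk_h(A),\kk_h(A))\xrightarrow{\simeq}\map_{\KK}(\kk(\C),\kk_h(A))\simeq K(A),\]
where the final identification uses \cref{qrijgfqowefdwefqwefqwedqwed}. Transporting the commutative algebra structure along this equivalence yields the desired canonical commutative algebra structure on $K(A)$ in $\Sp$.

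Third, to upgrade from $\Sp$ to $\Mod(\KU)$, I use that $\KK$ is $\KU$-linear as an object of $\mathrm{Pr}^L_{\mathrm{st}}$, as recorded after~\eqref{eq:adjunction}; in particular its mapping spectrum bifunctor naturally refines to a $\Mod(\KU)$-valued bifunctor. The proofs of \cref{Lemma:monoids} and \cref{eneferferferfe} go through verbatim with $\Sp$ replaced by $\Mod(\KU)$ and $\Cat^{\mathrm{st}}$ replaced by $\Mod(\KU)$-linear stable $\infty$-categories, since those proofs rely only on formal features of symmetric monoidal adjunctions and on the terminality of $\E_\infty$ among augmented $\E_1$-operads. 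The main (but still purely formal) point to verify is the $\KU$-linear enhancement of the adjunction appearing in the proof of \cref{eneferferferfe}, i.e.\ that the universal pointed $\KU$-linear stable $\infty$-category on an algebra $R\in\Alg(\Mod(\KU))$ is $\Mod^{\mathrm{f}}_{\KU}(R)$; granting this, running the whole argument in the $\KU$-linear context lifts the commutative algebra structure on $K(A)$ from $\Sp$ to $\Mod(\KU)$, completing the proof.
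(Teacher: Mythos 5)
Your argument is correct, but it is routed differently from the paper's. The paper's derivation is the shortest possible one: the idempotent $\epsilon$ makes $L_h(A)$ a commutative algebra object already in $\nCalg_{\sepa,h}$ (by the general theory of idempotent objects), the symmetric monoidal functor $\kk_h$ then produces a commutative algebra $\kk_h(A)$ in $\KK$, and finally $K(A)\simeq \map_{\KK}(\kk_h(\C),\kk_h(A))=\K(\kk_h(A))$ inherits a commutative algebra structure in $\Mod(\KU)$ simply because $\K$ is lax symmetric monoidal \emph{with values in $\Mod(\KU)$}, being the right adjoint of the symmetric monoidal functor $-\otimes_{\KU}\kk(\C)$ from \eqref{eq:adjunction}. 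You instead push the idempotent into $\KK$, apply \cref{eneferferferfe} to the Bousfield localization $L_{\kk_h(A)}\KK$ to put a commutative structure on the endomorphism algebra $\map_{\KK}(\kk_h(A),\kk_h(A))$, and transport it along $\epsilon^{*}$; this is exactly the mechanism the paper sets up in the paragraph on idempotents (there used in the opposite direction, to show the \emph{endomorphism} algebra is commutative), so it is sound, but it only lands you in $\CAlg(\Sp)$. The cost shows up in your third step: to reach $\Mod(\KU)$ you must re-run \cref{Lemma:monoids} and \cref{eneferferferfe} in a $\KU$-linear setting and you explicitly defer the verification of the $\KU$-linear enhancement of the adjunction $\Alg\rightleftarrows(\Cat^{\st})_{*}$. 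That verification is indeed purely formal and would go through, so this is not a gap in the sense of a false step, but it is avoidable overhead: since $\kk_h(A)$ is already a commutative algebra in $\KK$, the lax symmetric monoidality of $\K\colon\KK\to\Mod(\KU)$ (together with \cref{wtijgowergferwgwref}) hands you the commutative $\KU$-algebra structure on $K(A)$ in one line, with no need to revisit the endomorphism-algebra machinery at all.
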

By \cref{wtiuhgwergwerfwerf} this
 corollary in particular applies to   {separable strongly selfabsorbing   algebras}  in which case
it has previously  been  shown  by \cite{DP3} by an explicit construction of the ring spectrum $\K(A)$
{as a commutative symmetric ring spectrum.}

If $(\epsilon\colon\C\to A)$ is an idempotent in $\nCalg_{\sepa,h}$, then
$\ell\underline{\End}(A)$ becomes a commutative monoid {in $\Spc$} with respect to the composition
and we get a homomorphism 
 $$\ell\underline{\End}(A)\simeq \Map_{\nCalg_{\sepa,h}}(A,A)\xrightarrow{\kk_{h}}  \Omega^{\infty}KK(A,A)\xrightarrow{\epsilon^{*}} \Omega^{\infty}KK(\C,A)\simeq \Omega^{\infty}_{\otimes}K(A) \ ,$$
 where the monoid structure on the target is induced by the commutative algebra structure on $K(A)$ indicated by the subscript $\otimes$.
 The commutative subgroup of invertible components of $\Omega^{\infty}_{\otimes}K(A)$ is usually denoted by 
  $\mathrm{gl}_1(K(A))$\footnote{identifying commutative groups in $\Spc$ and connective spectra } \cite{mayunits} (see \cite{MR3252967} for an $\infty$-categorical version), and the underlying  {associative} group
   is  {denoted} $\GL_{1}(K(A))$.    Let $\ell\underline{\End}(A)^{\inv}$ denote the commutative group of invertible components in
  $\ell \underline{\End}(A)$.  \begin{cor} If $(\epsilon\colon \C\to A)$ is an idempotent in $\nCalg_{\sepa,h}$, then
   we have  a homomorphism of commutative groups
  $$\ell\underline{\End}(A)^{\inv}\to \mathrm{gl}_1(K(A))\ .$$
  \end{cor}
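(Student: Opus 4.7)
The plan is to upgrade the map of spaces constructed in the paragraph preceding the corollary to a morphism of commutative monoids in $\Spc$, and then restrict to invertible components. Concretely, I would show that each of the three maps in the composition
\[
\ell\underline{\End}(A)\simeq \Map_{\nCalg_{\sepa,h}}(A,A)\xrightarrow{\kk_{h}}\Omega^{\infty}KK(A,A)\xrightarrow{\epsilon^{*}}\Omega^{\infty}KK(\C,A)\simeq \Omega^{\infty}_{\otimes}K(A)
\]
is a morphism of commutative monoids in $\Spc$ (equivalently, of connective spectra), and then invoke the functorial assignment of the unit component.

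First I would record that since $(\epsilon\colon \C\to A)$ is an idempotent in the symmetric monoidal $\infty$-category $\nCalg_{\sepa,h}$, the object $A$ canonically refines to a commutative algebra object, with multiplication given by the inverse of $\epsilon\otimes\id$. Since $\kk_{h}=y\circ \Omega^{2}\circ L_{se}\circ L_{K}$ is symmetric monoidal, $\kk_{h}(A)$ is likewise an idempotent, and hence a commutative algebra, in the stably symmetric monoidal $\infty$-category $\KK$. Applying \cref{Lemma:monoids} to $\nCalg_{\sepa,h}$ (with distinguished object $A$) endows $\Map_{\nCalg_{\sepa,h}}(A,A)$ with a canonical commutative monoid structure whose underlying associative monoid is the composition structure; a symmetric monoidal functor such as $\kk_{h}$ transports commutative monoid structures functorially, so the first map is a morphism of commutative monoids in $\Spc$.

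Next I would apply \cref{eneferferferfe} to the stably symmetric monoidal category $L_{\kk_{h}(A)}\KK$ (equivalently, to $\KK$ viewed through the Bousfield localization associated to the idempotent $\kk_{h}(A)$). The lemma gives a canonical commutative algebra refinement of $\map_{\KK}(\kk_{h}(A),\kk_{h}(A))$ in $\Sp$, together with an equivalence of algebras identifying the composition structure with the multiplicative one. Pullback along $\epsilon$ then supplies an equivalence of commutative algebras
\[
\epsilon^{*}\colon\map_{\KK}(\kk_{h}(A),\kk_{h}(A))\xrightarrow{\simeq} \map_{\KK}(\kk_{h}(\C),\kk_{h}(A))\simeq K(A),
\]
whose underlying commutative monoid on $\Omega^{\infty}$ is precisely $\Omega^{\infty}_{\otimes}K(A)$. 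Thus each of the three maps in the displayed composition is a morphism of commutative monoids in $\Spc$, and hence so is the composite.

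Finally, the subspace of invertible components defines a functor from commutative monoids in $\Spc$ to commutative groups in $\Spc$, sending $\Omega^{\infty}_{\otimes}K(A)$ to $\mathrm{gl}_{1}(K(A))$ and $\ell\underline{\End}(A)$ to $\ell\underline{\End}(A)^{\inv}$. Applying this functor to the composite yields the asserted homomorphism of commutative groups. The only real subtlety in executing this plan is verifying that the composition and the multiplication structures on the various endomorphism objects agree as commutative monoid structures rather than merely as associative ones; this is where \cref{Lemma:monoids}(2) and \cref{eneferferferfe}(2) do the essential work, and without them one could only conclude a map of associative ($\E_{1}$-)monoids.
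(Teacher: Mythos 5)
Your proposal is correct and follows essentially the same route as the paper: the corollary is obtained by restricting to invertible components the homomorphism of commutative monoids constructed in the preceding paragraph, whose commutative refinements on source and target come from \cref{Lemma:monoids} and \cref{eneferferferfe} applied to the Bousfield-localized symmetric monoidal categories in which $A$, respectively $\kk_{h}(A)$, is the tensor unit. The only slight imprecision is your phrase ``applying \cref{Lemma:monoids} to $\nCalg_{\sepa,h}$ with distinguished object $A$'': since $A$ is not the tensor unit there, one must pass to $L_{A}\nCalg_{\sepa,h}$ (whose mapping spaces on local objects agree with those of $\nCalg_{\sepa,h}$), exactly as you do correctly in the stable case.
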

  Note that the space $\mathrm{B}\GL_{1}(K(A))$ classifies parametrized spectra with fibre $K(A)$, also called twists of $K(A)$.  
  {One of the goals of}
   \cite{DP3}, \cite{DP1}, \cite{DP2} is to realize 
  twists  by bundles of actual algebras,  see also \cref{wkogpwgerfwerfwf}. To this end one wants to  approximate
  $\GL_{1}(K(A))$ in terms of the actual automorphisms of $A$.
    Let $\underline{\Aut}(A)$ be the topological  subgroup of invertible endomorphisms of $A$. 
  \begin{cor} We have homomorphisms of  groups
  \begin{equation}\label{regerfferfrefwerf}\ell\underline{\Aut}(A)\to \ell\underline{\End}(A)^{\inv}\to   \GL_1(K(A))\ .
\end{equation}  
  \end{cor}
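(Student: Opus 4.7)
The plan is to assemble the two maps from essentially formal considerations, using the previous corollary for the second map and a direct topological argument for the first. For the first map, I would observe that $\underline{\Aut}(A)$ is by definition the topological group of invertible elements of the topological monoid $\underline{\End}(A)$ under composition, so the inclusion $\underline{\Aut}(A) \hookrightarrow \underline{\End}(A)$ is a continuous homomorphism of topological monoids whose image lies in the subspace of invertible components. Since the localization functor $\ell \colon \Top \to \Spc$ from \eqref{sfdgsfdgfsgsfdg} preserves the finite products relevant here (the relevant topological spaces of homomorphisms behave well under $\ell$), it sends topological monoids to monoid objects in $\Spc$ and topological groups to group objects in $\Spc$. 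Applying $\ell$ therefore yields a homomorphism of group objects $\ell\underline{\Aut}(A) \to \ell\underline{\End}(A)^{\inv}$ in $\Spc$. Here the target is the commutative group of invertible components of $\ell\underline{\End}(A)$: by \cref{Lemma:monoids} applied to the symmetric monoidal Bousfield localization associated to the idempotent $\epsilon$ (in which $L_h(A)$ is the tensor unit), the composition monoid structure on $\ell\underline{\End}(A) \simeq \Map_{\nCalg_{\sepa,h}}(A,A)$ agrees with the commutative monoid structure induced by the commutative algebra structure on $L_h(A)$, so this commutative group structure is genuinely well-defined and is compatible with the monoid structure on the source.

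For the second map, I would simply postcompose the homomorphism $\ell\underline{\End}(A)^{\inv} \to \mathrm{gl}_1(K(A))$ of commutative groups constructed in the previous corollary with the forgetful functor $\CGrp \to \Group$ (equivalently, with the passage from the connective $\mathbb{E}_\infty$-group $\mathrm{gl}_1(K(A))$ to its underlying $\mathbb{E}_1$-group). By definition, the underlying associative group of $\mathrm{gl}_1(K(A))$ is $\GL_1(K(A))$, so this produces the desired map $\ell\underline{\End}(A)^{\inv} \to \GL_1(K(A))$.

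There is no substantial obstacle here; the corollary is a repackaging of the previous one together with the obvious observation that automorphisms include into invertible endomorphisms. The only minor point to verify is that the localization $\ell$ preserves the monoid and group structures in play, which follows from the fact that $\ell$ restricted to the relevant topologically enriched subcategories of well-behaved spaces preserves finite products, so that the topological groups $\underline{\Aut}(A)$ and the topological monoids $\underline{\End}(A)$ descend to group and monoid objects in $\Spc$, and continuous (multiplicative) maps between them descend to morphisms in $\Group(\Spc)$ and $\Mon(\Spc)$ respectively.
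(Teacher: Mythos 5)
Your proposal is correct and matches the paper's (implicit) argument: the paper states this corollary without proof, as it follows immediately from the preceding corollary giving $\ell\underline{\End}(A)^{\inv}\to \mathrm{gl}_1(K(A))$ together with the observation that $\underline{\Aut}(A)$ includes into the invertible components of $\underline{\End}(A)$, and your assembly of the two maps (including the use of \cref{Lemma:monoids} to justify the commutative structure on the invertible components, and the passage from $\mathrm{gl}_1$ to its underlying group $\GL_1$) is exactly the intended reasoning.
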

  Note that we can vary the algebra $A$ in its $\KK$-class in order to 
  get \eqref{regerfferfrefwerf} close to an equivalence. {For instance, the stabilisation of the infinite Cuntz algebra $\mathcal{O}_{\infty} \otimes \mathbb{K}$ is $KK$-equivalent to $\C$, and it was shown in \cite{DP3} that \eqref{regerfferfrefwerf} induces an equivalence 
  \[\ell\underline{\Aut}(\mathcal{O}_{\infty} \otimes \mathbb{K})\simeq  \GL_1(\KU).\]}

\subsection{{Refined} multiplicative structures}
{
In the previous \cref{wrthijpwgergwergwrf} we  observed that   homotopically selfabsorbing $C^{*}$-algebras  give rise to commutative algebra objects in $\nCalg_{h}$ and therefore also in $\KK$. For every $k$ in $\{1,2,\dots,\infty\}$ one can now consider the category of $\E_{k}$-algebras in $\KK$.
These categories interpolate between the categories of algebras (the case $k=1$) and commutative algebras ($k=\infty$) in $\KK$.   {To the best of our knowledge}  $\E_{k}$-algebras in  $\KK$  for $k$ different from $\infty$ have not been studied  {explicitly} yet. 

Note that $\E_{k}$-algebra structures are interesting as they give rise to power operations which can in principle be used, for instance, to identify differentials in spectral sequences.

We first discuss   $K$-theoretic obstructions against the existence of a such structure. It employs that the lax symmetric monoidal functor $\K$ defined in \eqref{fjeiofjewrfredew} 
 sends  $\E_{k}$-algebras in $\KK$ to $\E_{k}$-algebras in $\Sp$.
 In the following, we will use that the $\KK$-class of the finite Cuntz algebra $\mathcal{O}_{n+1}$ is given by $\beins/n$. While  the tensor unit $\beins\simeq \kk(\C)\simeq \kk(\cO_{\infty})$ supports an $\E_{\infty}$-algebra structure, the $\KK$-classes $\beins/n\simeq \kk(\cO_{n+1})$ of the  finite Cuntz algebras for integers $n\not=1$}   do not:
 
\begin{lem}
If $n\ge 2$, then the object $\beins/n$  does not support an $\E_{\infty}$-structure. 
\end{lem}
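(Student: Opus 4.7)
The plan is to apply the lax symmetric monoidal functor $\K\colon \KK\to \Mod(\KU)$ from~\eqref{fjeiofjewrfredew} to any hypothetical $\E_\infty$-structure on $\beins/n$ and reduce the statement to the (classical) non-existence of an $\E_\infty$-ring structure on $\KU/n$ for $n\ge 2$.

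The first step is to identify $\K(\beins/n)\simeq \KU/n$. By construction $\beins/n$ is the cofibre of multiplication by $n$ on $\beins\simeq \kk(\C)$ in $\KK$; applying the exact functor $\K$ and using $\K(\beins)\simeq \KU$ produces a cofibre sequence $\KU\xrightarrow{n}\KU\to \K(\beins/n)$ in $\Mod(\KU)$, so $\K(\beins/n)\simeq \KU/n$. This matches the classical computations $K_0(\cO_{n+1})\cong \Z/n$ and $K_1(\cO_{n+1})=0$.

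The second step uses that $\K$ is lax symmetric monoidal: it therefore sends $\E_\infty$-algebras in $\KK$ to $\E_\infty$-algebras in $\Mod(\KU)$, and in particular to $\E_\infty$-ring spectra. Thus an $\E_\infty$-structure on $\beins/n$ would yield one on $\KU/n$, and it remains to obstruct this. Conceptually, an $\E_\infty$-$\KU$-algebra structure on $\KU/n$ produces power operations on $\pi_*\KU/n\cong (\Z/n)[\beta^{\pm 1}]$ (in particular, for each prime $p$ dividing $n$, a Frobenius-type self-map), and one shows these operations are incompatible with the graded-ring structure. A more direct route, which I would prefer, is to exploit the fact that multiplication by $n$ is null on $\beins/n$, so that $\beins/n \otimes \beins/n \simeq \beins/n \oplus \Sigma\beins/n$ in $\KK$; any $\E_\infty$-structure in particular yields a homotopy-commutative multiplication $\mu$, and the condition $\mu\simeq \mu\circ \tau$ for the swap $\tau$ on this splitting produces an obstruction class that can be read off from the triangulated structure on $\KK$ together with the $KK$-groups $KK_*(\cO_{n+1},\cO_{n+1})$ available via the UCT.

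The principal obstacle is this second step: establishing the non-existence of the $\E_\infty$-structure on $\KU/n$, or equivalently the non-vanishing of the swap obstruction. I would favour the direct computational approach inside $\KK$, since $KK_*(\cO_{n+1},\cO_{n+1})$ is accessible (of the form $\Hom(\Z/n,\Z/n)\oplus \Ext(\Z/n,\Z/n)\cong \Z/n\oplus \Z/n$ up to a Bott shift), and the action of the swap on the splitting $\beins/n\otimes\beins/n\simeq \beins/n\oplus \Sigma\beins/n$ can be computed explicitly from the defining cofibre sequence, thereby avoiding a possibly subtle power-operation argument imported from the literature on $\KU$-algebras.
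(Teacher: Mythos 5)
Your first step coincides exactly with the paper's: identify $\K(\beins/n)\simeq \KU/n$ via the cofibre sequence and use that the lax symmetric monoidal functor $\K$ transports a hypothetical $\E_\infty$-structure on $\beins/n$ to one on $\KU/n$. The problem is that you defer precisely the step that carries all the content, and your preferred route for it does not work. The swap obstruction $\mu\simeq\mu\circ\tau$ only obstructs \emph{homotopy} commutativity, i.e.\ an $\E_2$-type structure visible in the homotopy category, and this obstruction genuinely vanishes for many $n$: the paper's \cref{Prop:Ek-structures} shows, for instance, that $\beins/p^{3}$ ($p$ odd) admits an $\E_2$-structure, hence a homotopy commutative multiplication, while the lemma asserts it is nevertheless not $\E_\infty$. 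More classically, $\KU/n$ is homotopy associative and commutative for all odd $n$, so the class you propose to compute in $KK_1(\cO_{n+1},\cO_{n+1})\cong\Z/n$ is zero (for a suitable choice of $\mu$) and cannot detect the failure of $\E_\infty$. Your alternative route (a) is only a gesture: the graded ring $(\Z/n)[\beta^{\pm1}]$ by itself carries no obstruction --- e.g.\ $\F_p[\beta^{\pm1}]$ is realized by a $2$-periodic $\E_\infty$-$H\F_p$-algebra --- so no "incompatibility with the graded-ring structure" can be extracted without importing genuinely higher-order operations.

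The paper's actual argument is of exactly this higher-order kind: an $\E_\infty$-structure on $\KU/n$ would induce one on its $p$-completion for a prime $p\mid n$, which is a non-trivial $K(1)$-local $\E_\infty$-ring; by a theorem of Hopkins such a ring carries a $\delta$-ring structure (equivalently, $\theta$-operations) on $\pi_0$, and in a $\delta$-ring the unit cannot be $p$-power torsion, contradicting $\pi_0$ of the $p$-completion of $\KU/n$ being $\Z/p^k$. This is an infinite-order power-operation obstruction, invisible to any finite-stage commutativity check of the sort you propose. To repair your write-up you would need to replace step two by this (or an equivalent) argument; the reduction to $\KU/n$ itself is fine as written.
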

\begin{proof} Assume by contradiction that  $\beins/n$ supports an $\E_{\infty}$-structure. Then $\KU/n\simeq \K(\beins/n)$ would  support an $\E_{\infty}$-algebra structure which is known to be false. Indeed, an $\E_{\infty}$-structure on $\KU/n$ would induce one on the $p$-completion at any prime $p$. In particular, we may choose such a prime $p$ which divides $n$, so that the $p$-completion is a non-trivial $K(1)$-local $\E_{\infty}$-ring. Any such ring spectrum has a $\delta$-ring structure on $\pi_{0}$ \cite{Hopkins} and therefore the unit cannot be $p$-power torsion, see e.g.\ \cite[Lemma 1.5]{Bhatt}. This  contradicts the fact that $\pi_{0}$ of  the $p$-completion of $\KU/n$  is isomorphic to $\Z/p^{k}\Z$ for  some positive integer~$k$.
\end{proof}

In the following, for an abelian group $M$ or ring $R$  the symbols $HM$ or $HR$ denote the Eilenberg-MacLane spectrum on $M$ in $\Sp$ or of $R$ in $\Alg(\Sp)$.
Let $A$ be in $\KK$.
\begin{prop}\label{gjoiegjorefwefewrf}
Assume that $A$ admits an $\E_{2}$-algebra structure. If the unit in the ring  $\pi_{0}\K(A)$ is $p$-torsion for a prime $p$, then $\K(A) \simeq 0$.
\end{prop}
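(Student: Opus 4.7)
The plan is to transport structure along the lax symmetric monoidal functor $\K$ and then to argue that an $\E_2$-algebra over $\KU$ with $p$-torsion unit must vanish.

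First, I would invoke \cref{rwefefefw}: the functor $\K\colon\KK\to\Mod(\KU)$ is lax symmetric monoidal and therefore carries $\E_k$-algebras to $\E_k$-algebras for every $k$. In particular, the $\E_2$-algebra structure on $A$ in $\KK$ equips $\K(A)$ with the structure of an $\E_2$-algebra in $\Mod(\KU)$, i.e.\ an $\E_2$-algebra over $\KU$ in spectra. The hypothesis then translates to the statement that $p\cdot 1=0$ in $\pi_0\K(A)$.

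Second, I would appeal to the Hopkins--Mahowald theorem realizing $H\F_p$ as a Thom spectrum of an $\E_2$-map; this identifies $H\F_p$ as the initial $\E_2$-ring spectrum carrying a nullhomotopy of $p\cdot 1$. The hypothesis therefore provides an essentially unique $\E_2$-ring map $H\F_p\to\K(A)$, which combined with the $\KU$-action on $\K(A)$ promotes to an $\E_2$-algebra map
\[
H\F_p\otimes\KU\lto \K(A)
\]
in $\Mod(\KU)$.

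The main obstacle is the final step: deducing $\K(A)\simeq 0$ from the existence of this map. The strategy is to exploit the tension between two facts: the Bott element $\beta\in\pi_2\KU$ maps to an invertible element of $\pi_2\K(A)$ because $\K(A)$ is a $\KU$-algebra, while $p\cdot 1=0$ in $\pi_0\K(A)$ forces $p\beta=0$ in $\pi_2\K(A)$. For an $\E_2$-algebra over $\KU$ the $\E_2$-power operations must be compatible with the complex orientation inherited from $\KU$ via the multiplicative formal group law, whose $p$-series reduces mod $p$ to $\beta^{p-1}x^p$; threading this compatibility through the $\E_2$-structure yields a relation of the form $\beta\cdot u = 0$ for a unit $u$, which contradicts the invertibility of $\beta$ unless $1=0$ in $\pi_0\K(A)$ and hence $\K(A)\simeq 0$. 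This is the $\E_2$-refinement of the $\E_\infty$-obstruction used in the preceding lemma (i.e., the nonrealisability of $\KU/p$ as an $\E_2$-$\KU$-algebra), and making the power-operation argument rigorous is the technical crux of the proof.
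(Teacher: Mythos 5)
Your first two steps coincide with the paper's proof: transport the $\E_2$-structure along the lax symmetric monoidal functor $\K$, and then use the Hopkins--Mahowald theorem to produce an $\E_2$-ring map $H\F_p\to\K(A)$ (equivalently, an $H\F_p$-module structure on $\K(A)$) from the fact that $p=0$ in $\pi_0\K(A)$. Up to that point the argument is fine.

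The gap is the final step. The power-operation argument you sketch is not carried out, and it is doubtful that it can be: the obstruction used in the preceding lemma of the paper (a $\delta$-ring structure on $\pi_0$ of a $K(1)$-local ring) genuinely requires an $\E_\infty$-structure, and $\E_2$-algebras carry far too few power operations (essentially only a top operation and a Browder bracket) to run a ``$p$-series of the multiplicative formal group'' argument. More importantly, no such argument is needed: once you have the $\E_2$-map $H\F_p\otimes\KU\to\K(A)$ in $\Mod(\KU)$, it suffices to observe that the source is the zero ring, since a unital map from the zero ring forces $1=0$ in $\pi_0\K(A)$ and hence $\K(A)\simeq 0$. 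This vanishing is a Bousfield class computation, which is exactly how the paper concludes: $H\F_p\otimes\KU\simeq H\Z\otimes\KU/p$, the Bousfield class of $\KU/p$ agrees with that of the Morava $K$-theory $K(1)$, and $H\Z$ is $K(1)$-acyclic, so $H\Z\otimes\KU/p\simeq 0$. Replacing your unproven power-operation step by this observation completes the proof.
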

\begin{proof}
If $A$ has an $\E_{2}$-algebra structure, then $\K(A)$ is an $\E_{2}$-algebra in $\Sp$. Furthermore, by assumption $\pi_{0}\K(A)$ is an $\F_{p}$-algebra. By a result of Hopkins and Mahowald, see e.g.\ \cite[Theorem 5.1]{BAC}, it follows that $\K(A)$ is a  $H\F_{p}$-algebra and therefore a retract of $H\F_{p} \otimes \K(A)$. The latter is a module over $H\F_{p} \otimes \KU \simeq H\Z \otimes \KU/p$ and therefore trivial since the Bousfield class of $\KU/p$ is the same as that of $K(1)$ and $H\Z$ is $K(1)$-acyclic, see e.g.\ \cite[Lemma 2.2]{LMMT},  {so that $H\Z \otimes \KU/p=0$.}
\end{proof}

It follows that the  $\KK$-class   $\kk(\O_{p+1})\simeq \beins/p$
  of the finite Cuntz algebra with $p$ a prime does not even support an $\E_{2}$-algebra structure.
In fact, otherwise \cref{gjoiegjorefwefewrf} would imply that  $0\simeq \K(\beins/p)\simeq \KU/p$ which is false.

But the above does not rule out that $\mathcal{O}_{n+1}$ supports an $\E_{k}$-structure for $k\geq 2$ when $n$ is not prime. And indeed, the recent spectacular results in homotopy theory \cite{Burklund}  give a variety of positive examples. Let $n\geq 1$.

\begin{prop}\label{Prop:Ek-structures}
 {The object $\beins/2^{\lceil \tfrac{3}{2}(n+1) \rceil}$ and
for any odd prime $p$, the objects $\beins/p^{n+1}$ refine to $\E_{n}$-algebras in $\KK$.}
\end{prop}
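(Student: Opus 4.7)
The strategy is to transport the $\E_n$-algebra structures in question from the $\infty$-category of spectra to $\KK$ via a canonical symmetric monoidal functor, so that the only non-formal input is Burklund's recent theorem on multiplicative structures on generalized Moore spectra. Specifically, by \cite{Burklund}, the Moore spectrum $\mathbb{S}/p^{n+1}$ admits an $\E_n$-algebra structure in $\Sp$ for every odd prime $p$, and $\mathbb{S}/2^{\lceil 3(n+1)/2 \rceil}$ admits an $\E_n$-algebra structure in $\Sp$.

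The next step is to produce a colimit-preserving symmetric monoidal functor $\Phi\colon \Sp \to \KK$ sending the sphere spectrum $\mathbb{S}$ to the tensor unit $\beins = \kk(\C)$. Such a functor exists and is essentially unique because $\KK$ is presentably symmetric monoidal stable by \cref{ewkorgpegerwfrefw1}, and $\Sp$ is the initial presentably symmetric monoidal stable $\infty$-category (the $\infty$-categorical analogue of the free property recalled in \cref{eiorghbergerg}). Equivalently, $\Phi$ is given by $E \mapsto \beins \otimes E$ using the canonical $\Sp$-module structure on the presentable stable $\infty$-category $\KK$.

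Since $\Phi$ is exact and preserves the unit, it sends the Moore spectrum $\mathbb{S}/m = \cofib(m\colon \mathbb{S}\to \mathbb{S})$ to $\cofib(m\colon \beins \to \beins) = \beins/m$ in $\KK$. Moreover, being symmetric monoidal, $\Phi$ induces a functor between $\infty$-categories of $\E_n$-algebras for every $n$ (and indeed for any $\infty$-operad). Applying this to the $\E_n$-algebra structures on $\mathbb{S}/p^{n+1}$ and $\mathbb{S}/2^{\lceil 3(n+1)/2 \rceil}$ provided by Burklund's theorem yields the desired $\E_n$-algebra refinements of $\beins/p^{n+1}$ and $\beins/2^{\lceil 3(n+1)/2 \rceil}$ in $\KK$.

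The only substantive ingredient is Burklund's theorem; everything else is a formal consequence of the universal property of $\Sp$ and of the fact that symmetric monoidal functors preserve algebra structures over any $\infty$-operad. In particular, there is no operator-algebraic obstacle to overcome: the homotopy-theoretic enhancement $\KK$ of Kasparov's category is what allows the transfer to happen automatically.
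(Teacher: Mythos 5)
Your proof is correct and follows essentially the same route as the paper: invoke Burklund's theorem for the Moore spectra in $\Sp$, then transport the $\E_n$-structures along the essentially unique symmetric monoidal colimit-preserving functor $\Sp \to \KK$, which sends $\bS/m$ to $\beins/m$. Your write-up merely spells out the justifications (initiality of $\Sp$, exactness of the functor, preservation of $\O$-algebras) that the paper leaves implicit.
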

\begin{proof}
{
Indeed, \cite[Theorem 1.2]{Burklund} says that the result is true in the symmetric monoidal $\infty$-category $\Sp$. Moreover, there is a (unique) symmetric monoidal and colimit preserving functor $\Sp \to \KK$. Since this functor sends
$\bS/k$ to $\beins/k$ for any integer $k$, the assertion follows.
}
\end{proof}

 {The above does not show that $\mathcal{O}_{n+1}$ gives rise to an $\E_{1}$-algebra for all $n$, simply because $\bS/n$ is not always $\E_{1}$, for instance not when $n=p$ is a prime. Nevertheless, we have the following unconditional positive result for integers $n$:
\begin{lem}\label{Lemma:E1-structures}
 {The object $\beins/n$ refines to an $\E_{1}$-algebra in $\KK$.}
\end{lem}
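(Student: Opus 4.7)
The plan is to bypass the Burklund-type lifting from $\Sp$ (which fails for $\bS/2$) and instead exploit the $\KU$-linearity of $\KK$ established just before the lemma. The adjunction \eqref{eq:adjunction} provides a symmetric monoidal colimit-preserving functor
\[ \Phi \colon \Mod(\KU) \lto \KK, \qquad M \longmapsto M \otimes_{\KU} \beins,\]
sending $\KU$ to $\beins = \kk(\C)$. Since $\Phi$ preserves colimits, it sends $\KU/n = \mathrm{cofib}(n \colon \KU \to \KU)$ to $\beins/n$; being symmetric monoidal, it sends $\E_1$-algebras in $\Mod(\KU)$ to $\E_1$-algebras in $\KK$. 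The problem thus reduces to exhibiting an $\E_1$-$\KU$-algebra structure on $\KU/n$ for every integer $n$.

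To prove this, I would appeal to the fact that $\KU$ is an even $\E_\infty$-ring with $\pi_*(\KU) = \Z[\beta^{\pm 1}]$ and that $n \in \pi_0(\KU) = \Z$ is a nonzerodivisor. The classical obstruction theory for multiplicative structures on quotients (as developed by Angeltveit, building on Robinson) then guarantees an $\E_1$-$\KU$-algebra structure on $\KU/n$: associativity of the multiplication defined via the splitting $\KU/n \otimes_\KU \KU/n \simeq \KU/n \oplus \Sigma \KU/n$ (itself a consequence of $n \cdot \id_{\KU/n} = 0$ in $[\KU/n, \KU/n]_\KU$) is obstructed by a class in $\pi_{-1}(\KU/n)$, and the higher $A_\infty$-obstructions lie in odd-degree homotopy groups of $\KU/n$, all of which vanish by evenness. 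A parallel route would be to use that $\KU$ is complex orientable, receives an $\E_\infty$-ring map from $\MU$, and to observe that $\MU/n$ admits an $\E_1$-algebra structure by Strickland's regular-sequence theorem, so that $\KU/n \simeq \KU \otimes_{\MU} \MU/n$ inherits one by base change.

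The main obstacle is precisely the evenness argument for $n = 2$: in $\Sp$, the Moore spectrum $\bS/2$ famously admits no $\E_1$-structure because multiplication by $2$ on $\bS/2$ is $\eta \cdot \iota \cdot \pi$, and this nonvanishing $\eta$-term kills any associative multiplication. The whole point of passing through $\Mod(\KU)$ rather than $\Sp$ is that $\eta = 0$ in $\pi_1(\KU)$, so the $\eta$-obstruction disappears for $\KU$-modules. This is the structural reason why the present lemma is unconditional in $n$ while \cref{Prop:Ek-structures} is not. Once the $\E_1$-$\KU$-algebra structure on $\KU/n$ is in hand, applying $\Phi$ yields the desired $\E_1$-algebra structure on $\beins/n$ in $\KK$.
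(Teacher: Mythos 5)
Your first step---transporting the problem along the symmetric monoidal, colimit-preserving functor $-\otimes_{\KU}\kk(\C)\colon\Mod(\KU)\to\KK$ and thereby reducing to the existence of an $\E_{1}$-$\KU$-algebra structure on $\KU/n$---is exactly the reduction the paper makes. You diverge in how you produce that structure: the paper realises $\KU/n\simeq \KU/n\beta$ as the colimit of a map $S^{3}\to\mathrm{BGL}_{1}(\KU)\subseteq\Mod(\KU)$ and refines this to a map of $\E_{1}$-monoids by extending $S^{4}\to\mathrm{BBGL}_{1}(\KU)$ over $\mathrm{B}S^{3}$, where the obstructions lie in $\pi_{4k-1}(\mathrm{BBGL}_{1}(\KU))\cong\pi_{4k-3}(\KU)=0$; you instead invoke Angeltveit's $A_{\infty}$-obstruction theory for quotients of even rings by nonzerodivisors (or, alternatively, base change of $\mathrm{MU}/n$ along $\mathrm{MU}\to\KU$). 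Both routes are legitimate and both ultimately trade on the evenness of $\pi_{*}\KU$; the paper's Thom-spectrum argument is self-contained modulo the recognition principle it cites, while yours outsources the work to a citable theorem. Your diagnosis that the $\eta$-obstruction is what dies upon passing to $\KU$-modules---so that $n\cdot\id_{\KU/n}=0$ and the binary multiplication exists---is correct, and it is the right explanation for why this lemma is unconditional in $n$ while \cref{Prop:Ek-structures} is not.

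One caveat: your bookkeeping of where the higher obstructions live is not accurate, and if the evenness count is meant as a proof rather than as a pointer to Angeltveit's theorem, there is a gap. The associator of a chosen product on $\KU/n$ is a $\KU$-module map $\Sigma^{2}(\KU/n)\to\KU/n$ (the difference of the two triple products factors through the top summand $\Sigma^{2}(\KU/n)$ of $(\KU/n)^{\otimes_{\KU}3}$), and the group $[\Sigma^{2}(\KU/n),\KU/n]_{\KU}$ is isomorphic to $\Z/n$---coming from the \emph{even} group $\pi_{2}(\KU/n)$---not to $\pi_{-1}(\KU/n)=0$. Likewise the naive obstruction groups $[\Sigma^{2k}(\KU/n),\KU/n]_{\KU}$ at the higher Stasheff stages are all $\Z/n$. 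Angeltveit's theorem is true, but its proof does not proceed by showing these groups vanish; it requires analysing the indeterminacy, i.e.\ how the obstructions move under changes of the lower $A_{m}$-structures. So the statement you need is correct and citable, but the one-line justification you give for it would not survive on its own, and the secondary route via Strickland only yields homotopy associativity unless it is again supplemented by the $A_{\infty}$-refinement.
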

\begin{proof}
We consider the symmetric monoidal colimit preserving functor \[{-\otimes_{\KU}\kk(\C)\colon }\Mod(\KU) \to \KK\] from \eqref{eq:adjunction}. 
Since it sends $\KU/n$ to $\beins/n$, it suffices to show that $\KU/n$ refines to an $\E_{1}$-algebra in $\Mod(\KU)$. This is well-known to topologists and can for instance be seen as follows: Consider the element $n\beta$ in     $ \pi_{2}(\mathrm{GL}_{1}(\KU))$ and represent it by a map $S^{3} \to \mathrm{BGL}_{1}(\KU)$. The colimit of the composite of this map with the tautological inclusion $\mathrm{BGL}_{1}(\KU) \subseteq \Mod(\KU)$ is given by $\KU/n\beta \simeq \KU/n$, see e.g.\ \cite[Lemma 3.10]{Land-MJM}\footnote{In loc.\ cit.\ one can replace $\bS$ by $\KU$ throughout the argument.}. It then suffices to show that $S^{3} \to \mathrm{BGL}_{1}(\KU)$ can be refined to a map of $\E_{1}$-monoids in $\Spc$ \cite[Corollary 3.1]{BAC}, or
equivalently, that the induced map $S^{4} \to \mathrm{BBGL}_{1}(\KU)$ extends along bottom cell inclusion $S^{4} \to \mathrm{B}S^{3}$.
Since $\mathrm{B}S^{3}$ admits a cell structure with cells only in dimensions $4k$, the obstructions to extending the map $S^{4} \to \mathrm{BBGL}_{1}(\KU)$ to $\mathrm{B}S^{3}$ lie in the homotopy groups $\pi_{4k-1}(\mathrm{BBGL}_{1}(\KU))$ for $k\geq 2$. 
These homotopy groups are zero, since the odd homotopy groups of $\KU$ vanish, showing the lemma.  
\end{proof}

\subsection{Algebraic $K$-theory}
Let  $K^{\alg}\colon \Alg\to \Sp$ be the non-connective algebraic $K$-theory functor from (unital) associative algebras to spectra. {We note that it is canonically lax symmetric monoidal.} The restriction of $K^{\alg}$ along the {lax symmetric monoidal} forgetful functor $\Calg\to \Alg$ is known to be excisive in the sense that it sends pullback squares $$\xymatrix{A\ar[r]\ar[d] &B \ar[d] \\C \ar[r] &D }$$ in $\Calg$ whose vertical maps are surjective to cartesian squares in $\Sp$. Indeed, by a theorem of Suslin \cite[Thm.\ A]{suslin}, the functor $K^{\alg}$ sends such a square to a cartesian square provided the kernel of the map $B\to D$ is $\Tor$-unital, see also \cite{zbMATH06914269}, \cite{zbMATH07128142},  for new proofs of Suslin's excision result. We now use that $C^{*}$-algebras are $\Tor$-unital by Wodzicki \cite{wod}.  One can  {therefore} extend $K^{\alg}$ to non-unital $C^{*}$-algebras in the usual fashion by setting $K^{\alg}(A):=\Fib(K^{\alg}(A^{+})\to K^{\alg}(\C))$, where $A^{+}$ denotes the unitalization of $A$: It follows from excisiveness of $K^{\alg}$ that we have not changed the values on unital $C^{*}$-algebras. It also follows that $K^{\alg}$ defined in this way is lax symmetric monoidal, see e.g.\ \cite[Appendix A]{LN}. 

Let $B$ be any  $C^{*}$-algebra and consider the functors $$K_{B}, K^{\alg}_{B}\colon \nCalg\to \Sp \ ,\quad A\mapsto K(A\otimes_{\max} B )\ , \quad  A\mapsto K^{\alg}(A\otimes_{\max} B)\ .$$  {Both these functors are $s$-finitary. Indeed, this follows from the facts that $-\otimes_{\max} B$ preserves filtered colimits \cite[Lemma 7.13]{KKG}, that both topological and algebraic K-theory preserve filtered colimits, and that the forgetful functor from $C^{*}$-algebras to (non-unital) rings preserves the filtered colimit given by the family of separable subalgebras.}

We now assume that  $K^{\alg}_{B}$ is $\mathbb{K}$-stable. This is the case for instance when $B$ is properly infinite, see \cite[Prop.\ 2.2]{cotphi}, {such as
the Cuntz-algebras $\mathcal{O}_{n}$ for $2\leq n \leq \infty$ or the Jiang--Su algebra $\mathcal{Z}$, or when $B$ tensorially absorbs the compact operators $\mathbb{K}$}. Since $K^{\alg}_{B}$ is excisive by the discusssion above {(recall that the maximal tensor product preserves pullback squares whose vertical maps are surjections)}  it is semi-exact, and thus in particular split-exact. By a result of Higson \cite{higsa}, a $\mathbb{K}$-stable and split-exact functor is automatically homotopy invariant.
The functor
$K^{\alg}_{B}$ is thus homotopy invariant, $\mathbb{K}$-stable, semi-exact and $s$-finitary.
 By \cref{ewkorgpegerwfrefw} it has an essentially unique factorization
$$\xymatrix{\nCalg\ar[dr]_{\kk}\ar[rr]^{K^{\alg}_{B}}&&\Sp\\&\KK\ar@{.>}[ur]_{\K^{\alg}_{B}}&}\ .$$

{We now recall that there is a canonical isomorphism 
 \begin{equation}\label{iso-K-0}
 \pi_{0}K_{B}^{\alg}(-)\cong \pi_{0}K_{B}(-)
 \end{equation}
  of $\Ab$-valued functors 
 on $\nCalg$} \cite[Theorem 1.1]{Rosenberg-alg}.
 From \cref{werjigoegsfg}, we obtain the following version of  \cite[Thm.\ 3.2]{cotphi}. Let $B$ be a $C^{*}$-algebra.
\begin{cor}
If $B$ is {such that $K^{\alg}_{B}$ is $\mathbb{K}$-stable}, for instance a {properly} infinite $C^{*}$-algebra, then the isomorphism \eqref{iso-K-0} induces an equivalence 
of functors $K^{\alg}_{B}\simeq K_{B}$ from $\nCalg$ to $\Sp$.
\end{cor}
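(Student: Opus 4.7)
The approach is to mimic the argument of \cref{werjigoegsfg} with the comparison being $K^\alg_B \to K_B$ rather than $F \to K$. First I would verify that $K_B := K(-\otimes_{\max} B) \colon \nCalg \to \Sp$ is itself homotopy invariant, $\mathbb{K}$-stable, semi-exact, and $s$-finitary. All four properties are inherited from the corresponding properties of $K$ (\cref{wekojgwerferfwe}, \cref{wetkjgowergwerfw9}) together with the facts that $-\otimes_{\max} B$ preserves homotopies of $*$-homomorphisms, sends semi-split exact sequences to semi-split exact sequences, commutes with the upper-corner inclusion $\C\to\mathbb{K}$, and preserves filtered colimits. Combined with the hypothesis on $K^\alg_B$ and the discussion preceding the corollary, both $K_B$ and $K^\alg_B$ satisfy the hypotheses of \cref{ewkorgpegerwfrefw} and therefore factor essentially uniquely through $\kk$ as colimit-preserving functors $\K_B, \K^\alg_B \colon \KK \to \Sp$ (using the same notational convention as the excerpt). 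The symmetric monoidality of $\kk$ moreover forces $\K_B(X) \simeq \K(X\otimes\kk(B)) = \map_{\KK}(\beins, X\otimes\kk(B))$, so $\K_B$ is the composite of the corepresentable functor $\K$ with the colimit-preserving endofunctor $-\otimes\kk(B)$ of $\KK$.

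Next I would produce a canonical natural transformation $\hat u\colon K^\alg_B \to K_B$ extending \eqref{iso-K-0} by adapting the Yoneda argument in the proof of \cref{werjigoegsfg}. Using the identification $\K_B = \K\circ(-\otimes\kk(B))$ together with a Yoneda-type computation in $\Fun^L(\KK,\Sp)$, and combined with the universal-property translation between natural transformations on $\nCalg$ and those of colimit-preserving functors on $\KK$, the space $\Nat(K^\alg_B, K_B)$ receives a canonical map from $\Omega^\infty K^\alg_B(\C) = \Omega^\infty K^\alg(B)$. The class in $\pi_0 K^\alg(B)$ picked out by the given isomorphism \eqref{iso-K-0} at $A = \C$, i.e.\ the preimage of the unit of $\pi_0 K(B)$, then selects the desired transformation $\hat u$ up to contractible choice.

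To show $\hat u$ is an equivalence I would argue as in the proof of \cref{werjigoegsfg}: since both $\K_B$ and $\K^\alg_B$ are colimit-preserving and $\KK$ is compactly generated by the Yoneda image of $\KK_\sepa$, it suffices to check that the induced transformation $\tilde u\colon \K^\alg_B \to \K_B$ is an equivalence on each $\kk(A)$ with $A \in \nCalg_\sepa$. Using $\mathbb{K}$-stability and semi-exactness of both functors, one has $\pi_n \K_B(\kk(A)) \simeq \pi_0 K_B(S^n A)$ and similarly for the algebraic version, so applying \eqref{iso-K-0} to every suspension $S^n A$ yields isomorphisms on every homotopy group and hence an equivalence of spectra. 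By the equivalence $\kk^*$ of \cref{ewkorgpegerwfrefw}, this promotes back to an equivalence $K^\alg_B \simeq K_B$ of functors on $\nCalg$.

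I expect the main obstacle to be the construction of $\hat u$ in the second step. In \cref{werjigoegsfg} the Yoneda computation was immediate because $\K$ is corepresented by the tensor unit $\beins = \kk(\C)$; here, $\K_B$ is corepresented by $\kk(B)^\vee$ only when $\kk(B)$ is dualizable in $\KK$ (for instance when $B$ lies in the bootstrap class), so the required adjunction $\map_{\KK}(\beins, X\otimes \kk(B)) \simeq \map_{\KK}(\kk(B)^\vee, X)$ does not hold in the generality stated. In the general case, one must either exploit the lax symmetric monoidal refinement of $K^\alg$ together with the initiality property of $K$ in \cref{gqerhgperfqrferfwrf}, or else invoke an explicit classical construction of the comparison transformation $K^\alg \to K$ of (lax monoidal) functors on $\nCalg$ and transport it by $-\otimes_{\max} B$.
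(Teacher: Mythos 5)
Your overall strategy coincides with the paper's. The text establishes in the paragraphs preceding the statement that $K^{\alg}_{B}$ (and $K_{B}$) are homotopy invariant, $\mathbb{K}$-stable, semi-exact and $s$-finitary, factors $K^{\alg}_{B}$ through $\kk$ via \cref{ewkorgpegerwfrefw}, and then simply asserts that the corollary follows ``from \cref{werjigoegsfg}'' together with \eqref{iso-K-0}; no further argument is given. Your preparatory verifications (the closure properties of $-\otimes_{\max}B$, the identification $\K_{B}\simeq \K(-\otimes\kk(B))$ forced by the symmetric monoidality of $\kk$, and the reduction of ``equivalence'' to ``isomorphism on $\pi_{0}$ of $\kk(A)$ for all separable $A$'' using compact generation, exactness and Bott periodicity) are all correct and mirror the proof of \cref{werjigoegsfg}.

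The obstacle you isolate in your last paragraph is real, and it is exactly the point at which the citation of \cref{werjigoegsfg} is not a literal application: that proposition produces the comparison transformation from the corepresentability of $\K$ at the compact tensor unit, whereas $\K_{B}=\map_{\KK}(\beins,-\otimes\kk(B))$ is corepresentable only when $\kk(B)$ is dualizable. A natural isomorphism of $\pi_{0}$'s (equivalently, after shifting, of all homotopy groups) between two objects of $\Fun^{L}(\KK,\Sp)$ does not by itself lift to an equivalence -- this is an Adams-representability question, and since $\KK_{\sepa}$ does not have countable hom-groups it cannot be dismissed out of hand. Of your two proposed repairs, the second is the one that closes the gap in the stated generality: take the classical comparison transformation $K^{\alg}\Rightarrow K$ of functors on $\nCalg$, whisker it with $-\otimes_{\max}B$, and use that Rosenberg's theorem identifies its effect on $\pi_{0}$ with \eqref{iso-K-0}; your final step then applies verbatim. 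The first repair needs more care than you indicate, since $K^{\alg}_{B}$ is not itself lax symmetric monoidal and $K^{\alg}$ is not $\mathbb{K}$-stable, so neither functor satisfies the hypotheses of \cref{gqerhgperfqrferfwrf} directly. Note finally that for every $B$ actually treated afterwards ($\mathbb{K}$, $\mathcal{O}_{\infty}$, $\mathcal{Z}$, $\mathcal{O}_{n+1}$) the object $\kk(B)$ is dualizable with compact dual, so in those cases your Yoneda argument does go through with $\kk(\C)$ replaced by $\kk(B)^{\vee}$.
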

 
{The case $B=\mathbb{K}$ in the above corollary is Karoubi's conjecture \cite{zbMATH03693970},\cite{SW} asserting that $K^{\alg}_{\mathbb{K}}\simeq K$. Here we use that  the left upper corner inclusion $\C \to \mathbb{K}$ induces an equivalence $K_{\mathbb{K}} \simeq K$.
Similarly, since also $\mathcal{O}_{\infty}$ and $\mathcal{Z}$ are $\KK$-equivalent to $\C$, one obtains equivalences 
\[ K^{\alg}_{\mathcal{O}_{\infty}} \simeq K^{\alg}_{\mathcal{Z}} \simeq K.\]
Further examples of properly infinite $C^{*}$-algebras are the finite Cuntz algebras $\mathcal{O}_{n+1}$ for $n\geq 1$. {As already used earlier}, the unit $\C \to \mathcal{O}_{n+1}$ induces an equivalence in $\KK$ from $\kk(\C)/n$ to $\kk(\mathcal{O}_{n+1})$.
Consequently, $K_{\mathcal{O}_{n+1}} \simeq K/n$, and we obtain a canonical equivalence
\[ K^{\alg}_{\mathcal{O}_{n+1}} \simeq K/n.\]

 {
We record here that a conjecture of Rosenberg's \cite[Conjecture 3.1]{Rosenberg} asserts that for every $C^{*}$-algebra $A$ and positive natural number $n$, the canonical map 
\begin{equation}
\tau_{\geq0}[K^{\alg}(A)/n] \to \tau_{\geq 0}[K(A)/n]
\end{equation}
is an equivalence. This conjecture is known for certain classes of $C^*$-algebras, but is open in general. We refer to \cite{Rosenberg} for more details. Let us now consider the following commutative diagram}
\[\begin{tikzcd}
	\tau_{\geq0}[K^\alg(A)/n] \ar[r] \ar[d] & \tau_{\geq0}[K(A)/n] \ar[d,"\simeq"] \\
	\tau_{\geq0}[K^\alg(A \otimes \mathbb{K})/n] \ar[r,"\simeq"] & \tau_{\geq0}[K(A \otimes \mathbb{K})/n]
\end{tikzcd}\]
The right vertical map is an equivalence since $K$ is $\mathbb{K}$-stable and the lower horizontal map is an equivalence by the solution of Karoubi's conjecture. Therefore, Rosenberg's conjecture is equivalent to the statement that $\tau_{\geq0}K^\alg(-)/n$ is $\mathbb{K}$-stable. We note, of course, that the passage to connective covers is necessary as the example of $\C$ shows: $K^\alg(\C)/n$ is connective, whereas $K(\C)/n =\KU/n$ is periodic.}

 {Finally, we remark that by the above equivalence $K^\alg_{\mathcal{O}_{n+1}} \simeq K/n$, Rosenberg's conjecture is also equivalently phrased as asserting that the induced map
\[ K^{\alg}(A)/n \to K^{\alg}_{\mathcal{O}_{n+1}}(A) = K^{\alg}(A \otimes_{\max} \mathcal{O}_{n+1}) \]
is an equivalence on connective covers. Note that the existence of  the   map above  is not  obvious without the general discussion above.

  \subsection{$L$-theory}
 We use the material from \cite{LN}.
{Since every $C^{*}$-algebra has an underlying ring with involution, and algebraic $L$-theory is a lax symmetric monoidal functor from involutive rings to spectra, we obtain a {lax symmetric monoidal} $L$-theory functor $$L\colon\nCalg\to \Sp$$ for $C^{*}$-algebras. To be precise, we work with projective symmetric $L$-theory in order to obtain a well-defined extension to non-unital algebras. For unital $C^{*}$-algebras, one can also consider free symmetric $L$-theory, see \cite[Section 5]{LNS} for some results in this case.} For   $A$ in $\nCalg$ 
there exists a natural isomorphism of $\Z$-graded groups $$\tau_{A,*}\colon \pi_{*}K(A)\xrightarrow{\cong} \pi_{*}L(A)\ ,$$  
 {see \cite{LN}, \cite{LNS} for detailed discussions (in particular also for real $C^{*}$-algebras). {Note that the above isomorphism holds true only for complex $C^{*}$-algebras, but the isomorphism $\tau_{A,0}$ remains valid for real $C^{*}$-algebras and is induced by the observation that any finitely generated projective $A$-module is essentially uniquely a Hilbert-$A$-module, i.e.\ is equipped with a canonical positive definite form}. However, even for complex $C^{*}$-algebras, the isomorphism $\tau_{A,0}$ does not induce an equivalence  {of spectra} between $K(A)$ and $L(A)$. In fact, {even the spectra $\KU$ and $L(\C)$ are not equivalent, and do not even admit any non-trivial maps between each other, see \cite[Theorem E]{LN}}. We note that \cref{werjigoegsfg} is not applicable since $L$ is not semi-exact ({though it is split-exact}).

 Since $\kk\colon \nCalg_{\sepa}\to \KK_{\sepa}$ presents the latter as the Dwyer-Kan localization of 
  $\nCalg_{\sepa}$ at the $\kk_{\sepa}^0$-equivalences  and the restriction  $L_{\sepa}$  of $L$ to separable $C^{*}$-algebras is known to
  send $\kk_{\sepa}^0$-equivalences to equivalences  we have a factorization
  $$\xymatrix{\nCalg_{\sepa}\ar[dr]_{\kk_{\sepa}}\ar[rr]^{L_{\sepa}}&&\Sp\\&\KK_{{\sepa}}\ar@{..>}[ur]_{\hat L_{\sepa}}&}\ .$$
  But in contrast to the functors characterized in \cref{werojigwergwerfw1} the induced functor $\hat L_{\sepa}$
is not exact, i.e.\ does not belong to $\Fun^{\mathrm{ex}}(\KK_{\sepa},\Sp)$, see also \eqref{sdvdscadscasdcds}. {However, being split-exact, it does preserve products and so one can use similar techniques to obtain a canonical lax symmetric monoidal transformation from  the connective cover   $k=\tau_{\geq0}K$ of the K-theory functor to $L$. The main result of \cite{LNS} says that the induced map 
\[ k(A) \otimes_{\mathrm{ko}} L(\R) \to L(A) \]
is an equivalence for every real $C^{*}$-algebra, and consequently, that for each complex $C^{*}$-algebra $B$, a canonical map
\[ k(B) \otimes_{\mathrm{ku}} L(\C) \to L(B) \]
is an equivalence. From these equivalences of spectra, one can describe all $L$-groups (in particular of real $C^{*}$-algebras) in terms of $K$-groups and the action of a certain degree one operator on them (the multiplication by $\eta \in \KO_{1}(\R)$), see \cite[Theorem B]{LNS}.}

\subsection{{Universal coefficient classes}}
 \label{wotejgowferfwefwe}

In this example we explain how the homotopical point of view on $\KK$ simplifies the discussion of the universal coefficient theorem of Rosenberg--Schochet \cite{Rosenberg_1987}. Consider the functors
{$\K\colon \KK \to \Mod(\KU)$ from \eqref{fjeiofjewrfredew} and $ \K_{*}:=\pi_{*}\K\colon \KK\to \Mod(\KU_{*})$.  
 The functor $\K_{*}$ induces for all $A,B$ in $\KK$ a binatural map
\begin{equation}\label{wergergewfwerf}
\pi_{*}\mathrm{KK}(A,B)\to  \Hom_{\KU_{*}}(\K_{*}(A),\K_{*}(B))\ .
\end{equation} 
which is an isomorphism if $A=\kk(\C)$. But since the functor  
$ \Hom_{\KU_{*}}(-,\K_{*}(B))$  is not exact in general, we can not expect that 
\eqref{wergergewfwerf} is an isomorphism for all $A,B$.

As in {the K\"unneth problem discussed in}  \cref{rgjhiergewrg9}, it turns out to be useful to consider the homotopical version 
 \begin{equation}\label{asdcascdasxasxa}
\mathrm{KK}(A,B)\to \map_{\KU}(\K(A),\K(B))
\end{equation}
of  \eqref{wergergewfwerf} induced by $\K$.
Since $\KU_{*}$ has homological dimension $1$ we have, for all $A,B$ in $\KK$, an exact sequence
\begin{equation}\label{efqdadsdsasd}
0\to \Ext_{\KU_{*}}(\K_{*}(A),\K_{*{+1}}(B))\to \pi_{*}\map_{\KU}(\K(A),\K(B)) \to \Hom_{\KU_{*}}(\K_{*}(A),\K_{*}(B))\to 0\ .
\end{equation}
Furthermore we have a complex (not necessarily exact) 
\begin{equation}\label{efqdadsdsasd1}
0\to \Ext_{\KU_{*}}(\K_{*}(A),\K_{*+1}(B))\to \pi_{*}\mathrm{KK}(A,B) \to \Hom_{\KU_{*}}(\K_{*}(A),\K_{*}(B))\to 0\ .
\end{equation} 
{Here, the second map is induced by \eqref{asdcascdasxasxa} and the second map in \eqref{efqdadsdsasd}} and the first map lifts the first map from \eqref{efqdadsdsasd} along the map obtained from \eqref{asdcascdasxasxa} by applying $\pi_{*}$. In order to construct {this map} one chooses  a morphism $F\to A$ in $\KK$
such that  $\K_{*}(F)\to \K_{*}(A)$ is surjective and $F$ is an appropriate sum of copies of $\kk(\C)$ and $\Sigma \kk(\C)$. One then inserts the fibre sequence $\Fib(F\to A)\to F\to A$ into $\pi_{*}\mathrm{KK}(-,B)$  and finds the map as induced from the resulting long exact sequence.

{As a consequence}, if $A,B$  are  in $\KK$,  then the map \eqref{asdcascdasxasxa}
  is an equivalence if and only if the sequence  \eqref{efqdadsdsasd1}
is exact.  This relates the question  whether  \eqref{asdcascdasxasxa} is an equivalence {to} the classical universal coefficent theorem for $\KK$  \cite{Rosenberg_1987}.
\begin{definition}  \label{hwrthgwregregwerg}  
We let $\UCT$  be  the full subcategory of $\KK$  on objects $A$ such that  the map
 \eqref{asdcascdasxasxa} is an equivalence for all $B$ in $\KK$. 
 \end{definition}

 \begin{prop}\label{wklgpwrfgerwfw9}\mbox{}
  \begin{enumerate}
\item\label{tgijwoiergewrgwefw} The adjunction  \eqref{eq:adjunction}   is a right Bousfield localization.
\item\label{wergwoeigjowefewrf} The  composition
\[\K_{|\UCT} \colon \UCT\subseteq \KK\to \Mod(\KU)\]  
is an equivalence of stable $\infty$-categories.
 \item\label{ogfrerferijoetgweferw}  $\UCT$ is the localizing subcategory of $\KK$ generated by $\C$.\footnote{That is, $\UCT$ is the smallesst stable subcategory which is closed under colimits and contains $\kk(\C)$.}
  \end{enumerate}
\end{prop}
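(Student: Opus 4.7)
I plan to prove the three parts in order, each feeding into the next.

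For (1), I unwind that a right Bousfield localization amounts to the left adjoint $L := -\otimes_{\KU}\kk(\C)$ being fully faithful, equivalently to the unit $\eta \colon \id_{\Mod(\KU)} \to \K \circ L$ being an equivalence. Both $\id$ and $\K L$ preserve colimits: $L$ as a left adjoint, and $\K = \mathrm{KK}(\kk(\C), -)$ because $\kk(\C) \in \KK_{\sepa}$ is by construction a compact object of $\KK = \Ind(\KK_{\sepa})$. Since $\KU$ is a compact generator of $\Mod(\KU)$, it suffices to verify that $\eta_{\KU} \colon \KU \to \K(\kk(\C)) = \mathrm{KK}(\kk(\C), \kk(\C))$ is an equivalence, and this is precisely the identification coming from the $\KU$-linear structure on $\KK$ recalled just before \eqref{fjeiofjewrfredew}.

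For (2), the plan is to show that $\UCT$ equals the essential image of $L$, after which (2) is immediate from (1). The containment ``image of $L$ inside $\UCT$'' is formal: for $M \in \Mod(\KU)$ and $B \in \KK$, the composite
\[ \mathrm{KK}(L(M), B) \simeq \map_{\KU}(M, \K(B)) \simeq \map_{\KU}(\K L(M), \K(B)) \]
(adjunction followed by the unit equivalence from (1)) is the UCT map at $A = L(M)$. For the reverse, given $A \in \UCT$ I consider the counit $\epsilon_A \colon L\K(A) \to A$. The triangle identity $\K(\epsilon_A) \circ \eta_{\K A} = \id_{\K A}$ together with $\eta_{\K A}$ being an equivalence (by (1)) shows that $\K(\epsilon_A)$ is an equivalence. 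Both $L\K(A)$ and $A$ lie in $\UCT$, and the definition of $\UCT$ says precisely that $\K_{|\UCT}$ is fully faithful on mapping spectra, hence reflects equivalences; so $\epsilon_A$ itself is an equivalence. Thus $\UCT$ coincides with the essential image of $L$, and $\K_{|\UCT}$ is inverse to $L$.

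For (3), I first verify that $\UCT$ is a localizing subcategory containing $\kk(\C)$. Containment is immediate (at $A = \kk(\C)$ the UCT map is the identity on $\K(B)$), closure under shifts is clear, closure under colimits holds because both $\mathrm{KK}(-, B)$ and $\map_{\KU}(\K(-), \K(B))$ send colimits in the first variable to limits (using that $\K$ preserves colimits for the latter), and closure under cofibers follows similarly from exactness in the first variable via the five-lemma. Conversely, if $A \in \UCT$, part (2) gives $A \simeq \K(A) \otimes_{\KU} \kk(\C)$, and $\K(A)$ is a colimit of shifts of $\KU$ in $\Mod(\KU)$; applying $L$, which preserves colimits, realizes $A$ as a colimit of shifts of $\kk(\C)$, placing $A$ in the localizing subcategory generated by $\kk(\C)$. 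The hardest step is the conservativity argument in part (2), where one must carefully combine the triangle identity with the definitional full faithfulness of $\K_{|\UCT}$ in order to promote $\K(\epsilon_A)$ being an equivalence to $\epsilon_A$ itself being an equivalence; everything else is formal given (1) and the colimit generation of $\Mod(\KU)$ by $\KU$.
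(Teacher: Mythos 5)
Your proposal is correct and follows essentially the same route as the paper: verify the unit on the compact generator $\KU$ and propagate by colimit preservation for (1), identify $\UCT$ with the essential image of $-\otimes_{\KU}\kk(\C)$ for (2), and transport colimit generation of $\Mod(\KU)$ by $\KU$ across the resulting equivalence for (3). The only (harmless) divergence is in (2), where the paper deduces ``$A\in\UCT$ iff the counit $\epsilon_A$ is an equivalence'' in one step by factoring the universal coefficient map as $\mathrm{KK}(A,B)\xrightarrow{\epsilon_A^*}\mathrm{KK}(\K(A)\otimes_{\KU}\kk(\C),B)\simeq\map_{\KU}(\K(A),\K(B))$ and invoking Yoneda, whereas you reach the same conclusion via the triangle identity together with conservativity of the fully faithful $\K_{|\UCT}$.
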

\begin{proof} We start with assertion \eqref{tgijwoiergewrgwefw}.
To this end we must show that
 the unit map $M \to \K(M \otimes_{\KU} \kk(\C))$ is an equivalence.
 It is obviously so for $M=\KU$. We now argue that its source and target preserve colimits and use that $\Mod(\KU)$ is generated under colimits by $\KU$. Indeed, the source is the identity, and for the target we employ the facts that $ - \otimes_{\KU} \kk(\C)$ preserves colimits, and $\K=\map_{\KK}(\kk(\C),-)$ also preserves colimits since $\kk(\C)$ is a compact object of $\KK$. 
The left adjoint  $- \otimes_{\KU} \kk(\C)$ of the adjunction \eqref{eq:adjunction} is therefore fully faithful.

In order to show assertion \eqref{wergwoeigjowefewrf} it suffices to show that the essential image of the 
left-adjoint  $-\otimes_{\KU}\kk(\C)$ in \eqref{eq:adjunction}
 is  $\UCT$. {By assertion \eqref{tgijwoiergewrgwefw}}, this essential image 
 consists of all objects $A$ of $\KK$ for which the 
counit $\K(A) \otimes_{\KU} \kk(\C) \to A$ of the adjunction  \eqref{eq:adjunction} 
is an equivalence. This counit induces the first map in the factorization
 \[ \mathrm{KK}(A,B) \to \mathrm{KK}(\K(A)\otimes_{\KU} \kk(\C),B) \stackrel{\eqref{eq:adjunction} }{\simeq} \map_{\KU}(\K(A),\K(B))\]
 of \eqref{asdcascdasxasxa}. In view of \cref{hwrthgwregregwerg}
  the object $A$ is in the essential image of  $- \otimes_{\KU} \kk(\C)$  if and only if it belongs to $\UCT$.

 In order to see assertion \eqref{ogfrerferijoetgweferw} we  use that
 $\Mod(\KU)$ is generated under colimits by $\KU$. In view of the equivalence from assertion \eqref{wergwoeigjowefewrf} the category $\UCT$ is then generated under colimits by $\kk(\C)$.
 \end{proof}

 \begin{cor}\label{qirjoqrfewqedew}\mbox{}
 \begin{enumerate}
 \item  \label{igjweorijfwerfrewf}
 For $A$ in $\UCT$ and $B$ in $\KK$, {the canonical map} 
\[ \K(A)\otimes_{\KU}\K(B){\to} \K(A\otimes B)\ \]
is an equivalence.
\item \label{hregiuwerhgfwerfwrefwref}The equivalence
  $$\K_{|\UCT} \colon \UCT \stackrel{\simeq}{\to} \Mod(\KU)\ .$$  
  from \cref{wklgpwrfgerwfw9} \eqref{wergwoeigjowefewrf} is symmetric monoidal.\end{enumerate}
\end{cor}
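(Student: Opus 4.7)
The plan is to use the right Bousfield localization established in \cref{wklgpwrfgerwfw9}. Write $L := -\otimes_\KU \kk(\C)$ for the left adjoint of \eqref{eq:adjunction}. Since $L$ is symmetric monoidal, its right adjoint $\K$ inherits a canonical lax $\Mod(\KU)$-linear structure that refines its lax symmetric monoidal structure.

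For assertion \eqref{igjweorijfwerfrewf} the main input is a projection formula: for every $M \in \Mod(\KU)$ and every $B \in \KK$, the canonical comparison map
\[ M \otimes_\KU \K(B) \to \K(L(M) \otimes B) \]
is an equivalence. Since $L$ is a left adjoint, $\otimes$ in $\KK$ preserves colimits in each variable, and $\K$ preserves all colimits (being exact and using compactness of $\kk(\C)$ in $\KK$, as in the proof of \cref{wetkjgowergwerfw9}), both sides are cocontinuous functors of $M$. They agree tautologically at $M = \KU$, and $\Mod(\KU)$ is generated under colimits by $\KU$, so the map is an equivalence in general. If now $A \in \UCT$, the proof of \cref{wklgpwrfgerwfw9}\eqref{wergwoeigjowefewrf} shows that the counit $L(\K(A)) \to A$ is an equivalence; substituting $M = \K(A)$ gives
\[ \K(A) \otimes_\KU \K(B) \simeq \K(L(\K(A)) \otimes B) \simeq \K(A \otimes B), \]
and tracing through the adjunction $L \dashv \K$ identifies this composite with the canonical lax monoidal comparison map.

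For assertion \eqref{hregiuwerhgfwerfwrefwref}, I would first show that $\UCT$ is closed under $\otimes$ in $\KK$. By \cref{wklgpwrfgerwfw9}\eqref{wergwoeigjowefewrf}, $\UCT$ is the essential image of the symmetric monoidal functor $L$, so if $A \simeq L(M)$ and $B \simeq L(N)$, then $A \otimes B \simeq L(M \otimes_\KU N) \in \UCT$; and $\kk(\C) \simeq L(\KU) \in \UCT$. Hence $\UCT$ carries a symmetric monoidal structure making its inclusion into $\KK$ symmetric monoidal. The lax symmetric monoidal functor $\K$ restricts to a lax symmetric monoidal $\K_{|\UCT}$; its unit map is the equivalence $\K(\kk(\C)) \simeq \KU$, and its binary structure maps are equivalences by assertion \eqref{igjweorijfwerfrewf}. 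Combined with the fact that the underlying functor is an equivalence by \cref{wklgpwrfgerwfw9}\eqref{wergwoeigjowefewrf}, this yields a symmetric monoidal equivalence.

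The principal technical point is to verify that the equivalence built from the projection formula really is the canonical lax monoidal structure map. This should be formal: the lax $\Mod(\KU)$-linear structure on $\K$ is constructed precisely from its lax symmetric monoidal structure via the symmetric monoidality of $L$, so both comparison maps arise as the $L \dashv \K$ adjoint of the natural map $L(\K(A)) \otimes L(\K(B)) \simeq L(\K(A) \otimes_\KU \K(B)) \to A \otimes B$ assembled from the counits. Making this precise $\infty$-categorically requires some bookkeeping, but is standard in the framework of symmetric monoidal adjunctions.
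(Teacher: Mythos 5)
Your proof is correct and rests on the same mechanism as the paper's: both sides of the comparison map preserve colimits, they agree on the generator, and the relevant category is generated under colimits by that generator. The only real difference is that the paper runs this argument directly on the canonical transformation $\K(-)\otimes_{\KU}\K(B)\to \K(-\otimes B)$ as a functor of $A$ on $\UCT$ (using \cref{wklgpwrfgerwfw9}~(\ref{ogfrerferijoetgweferw})), which sidesteps the bookkeeping you flag at the end: there is no separately constructed equivalence that must afterwards be identified with the canonical lax monoidal structure map, since the canonical map itself is shown to be an equivalence. Your elaboration of part~(\ref{hregiuwerhgfwerfwrefwref}) (closure of $\UCT$ under $\otimes$ via the essential image of the symmetric monoidal left adjoint, and upgrading the lax structure to a strong one using part~(\ref{igjweorijfwerfrewf})) correctly fills in what the paper leaves implicit.
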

\begin{proof}
For \eqref{igjweorijfwerfrewf}, we fix $B$ and view the source and target of the map in question 
 as functors on $A$.  It is an equivalence for $A=\kk(\C)$.
 Since both  functors preserve colimits, the claim follows from  \cref{wklgpwrfgerwfw9} (\ref{ogfrerferijoetgweferw}) stating that 
 $\UCT$  is  generated under colimits by $\kk(\C)$.

Assertion \eqref{hregiuwerhgfwerfwrefwref} is a consequence of assertion \eqref{igjweorijfwerfrewf}
and  \cref{wklgpwrfgerwfw9} (\ref{wergwoeigjowefewrf}).
\end{proof}

The class of $C^{*}$-algebras 
$$\{A\in \nCalg\mid \kk(A)\in \UCT\}$$ contains $\C$, is closed under finite sums  {(since the functor $\kk \colon \nCalg \to \KK$ preserves finite sums)}  and satisfies the two-out-of three property for semi-exact sequences.

We  point out that the classical UCT-class is also closed under countable sums and filtered colimits. But this is not immediate from the present perspective since
$y\colon \KK_{\sepa} \to \KK$ is not compatible with infinite colimits. On the other hand, the test objects $B$ for $\UCT$ {according to our definition} are all objects of $\KK$, while in the classical case we only consider objects of $\nCalg_{\sepa}$.
In order to capture the classical $\UCT$  we consider the K-theory functor restricted to separable objects
$$\K_{\sepa}:=\K\circ y\markus{\colon}\KK_{\sepa} \to \Mod(\KU)$$
and the map
 \begin{equation}\label{asdcascdasxasxa1}
\mathrm{KK}_{\sepa}(A,B)\to \map_{\KU}(\K_{\sepa}(A),\K_{\sepa}(B))\ .
\end{equation}
\begin{definition} We let 
  $\UCT_{\sepa}$ be the  full subcategory of $\KK_{\sepa}$ of objects $A$  such that the map
  \eqref{asdcascdasxasxa1}
is an equivalence for all $B$ in $\KK_{\sepa}$.
\end{definition}

\begin{definition}
We let  $\Mod(\KU)_\sepa$ be full subcategory of $\Mod(\KU)$ on $\KU$-modules whose homotopy groups are countable.
\end{definition}
We note that $\Mod(\KU)_\sepa$ is the smallest stable subcategory of $\Mod(\KU)$ which is closed under countable colimits and contains $\KU$.

We then have the following  {result}, see also \cite[Thm.\ 6.1]{MR2193334}.
\begin{prop}\label{oktrhoeprgrtgertge}\mbox{}
\begin{enumerate}
 \item \label{gwkjorfgwerfwerf}The functor $\K_{\sepa}$ is the right-adjoint of a right Bousfield localization
$$- \otimes_\KU \kk_\sepa(\C) \colon \Mod(\KU)_\sepa \leftrightarrows  \KK_{\sepa}  \colon \K_{\sepa} $$ 
whose left-adjoint is essentially uniquely characterized by {sending $\KU$ to $\kk_\sepa(\C)$}.  \item\label{ogijoetgweferw11}  The  composition
\begin{equation}\label{ascasdcascdas}{\K_{\sepa}}_{|\UCT_\sepa} \colon \UCT_{\sepa}\subseteq \KK_{\sepa}\to \Mod(\KU)_\sepa \end{equation}   is an equivalence of stable $\infty$-categories.
\item\label{hfiuhuidfaffasffd}  $\UCT_{\sepa}$ is the countably-localizing subcategory
of $\KK_{\sepa}$ generated by $\kk_{\sepa}(\C)$.\footnote{That is, $\UCT_\sepa$ is the smallest stable subcategory which is closed under countable colimits and contains $\kk_\sepa(\C)$.}
\end{enumerate}\end{prop}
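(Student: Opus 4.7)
The plan is to run the proof of \cref{wklgpwrfgerwfw9} in the countably cocomplete setting, replacing arbitrary colimits throughout by countable colimits, compactness in $\KK$ (resp.\ $\Mod(\KU)$) by separability (resp.\ countability of homotopy groups), and the presentable ambient category $\KK$ by the essentially small category $\KK_\sepa$ of its compact objects. First I would verify that $\K\colon \KK \to \Mod(\KU)$ restricts to a functor $\K_\sepa\colon \KK_\sepa \to \Mod(\KU)_\sepa$. On the image of $\kk$ this amounts to the classical fact that $K_*(A) \cong KK_*(\C,A)$ is countable for every separable $C^*$-algebra $A$, and the general case follows by closure of the countability condition under shifts, finite colimits, and retracts. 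Second, I would show that the left adjoint $-\otimes_\KU \kk(\C)$ of \eqref{eq:adjunction} restricts to a functor $-\otimes_\KU \kk_\sepa(\C)\colon \Mod(\KU)_\sepa \to \KK_\sepa$. Since this left adjoint is colimit preserving and sends $\KU$ to $\kk_\sepa(\C)$, and since $\Mod(\KU)_\sepa$ is the countable-colimit closure of $\KU$ by the remark preceding the proposition, it suffices to know that $y\colon \KK_\sepa \to \KK$ is closed under countable colimits in $\KK$.

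I expect this last closure property to be the main technical obstacle. It rests on the facts that $\nCalg_\sepa$ admits countable colimits (countable direct sums and sequential colimits of separable $C^*$-algebras are separable) and that the sequence of Dwyer--Kan localizations \eqref{gregwregfwfrfwrfwfwrfrf} producing $\KK_\sepa$ preserves them in a fashion compatible with the $\Ind$-completion $y$. Granting this, the adjunction of part \eqref{gwkjorfgwerfwerf} follows by computing mapping spectra along the fully faithful embeddings $\Mod(\KU)_\sepa \hookrightarrow \Mod(\KU)$ and $y\colon \KK_\sepa \hookrightarrow \KK$ and invoking \eqref{eq:adjunction}. That the resulting adjunction is a right Bousfield localization amounts to the unit $X \to \K_\sepa(X\otimes_\KU \kk_\sepa(\C))$ being an equivalence; this is clear at $X = \KU$, and both the identity and $\K_\sepa \circ (-\otimes_\KU \kk_\sepa(\C))$ preserve countable colimits (the latter because $\kk(\C)$ is compact in $\KK$, so $\K$ commutes with countable colimits, which, restricted to $\KK_\sepa$ via the compatibility just asserted, transfers to $\K_\sepa$), so the unit is an equivalence on all of $\Mod(\KU)_\sepa$. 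Essential uniqueness of the left adjoint follows from the universal property of $\Mod(\KU)_\sepa$ as the countable-colimit completion of $\KU$.

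Parts \eqref{ogijoetgweferw11} and \eqref{hfiuhuidfaffasffd} would then be deduced in direct analogy with the derivation of \cref{wklgpwrfgerwfw9}\eqref{wergwoeigjowefewrf} and \eqref{ogfrerferijoetgweferw} from \eqref{tgijwoiergewrgwefw}. The map \eqref{asdcascdasxasxa1} factors as
\[ \mathrm{KK}_\sepa(A,B) \lto \mathrm{KK}_\sepa(\K_\sepa(A)\otimes_\KU \kk_\sepa(\C), B) \simeq \map_{\KU}(\K_\sepa(A), \K_\sepa(B)), \]
where the first arrow is induced by the counit $\K_\sepa(A)\otimes_\KU \kk_\sepa(\C) \to A$ of the adjunction from part \eqref{gwkjorfgwerfwerf} and the second equivalence comes from that same adjunction. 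By the Yoneda lemma in $\KK_\sepa$, the object $A$ lies in $\UCT_\sepa$ if and only if this counit is an equivalence, i.e.\ if and only if $A$ lies in the essential image of the fully faithful left adjoint. This identifies $\UCT_\sepa$ with that essential image and yields the equivalence \eqref{ascasdcascdas}. Finally, part \eqref{hfiuhuidfaffasffd} is immediate: since the left adjoint is fully faithful and preserves countable colimits and $\Mod(\KU)_\sepa$ is generated under countable colimits by $\KU$, the subcategory $\UCT_\sepa$ is the countable-colimit closure of $\kk_\sepa(\C)$ in $\KK_\sepa$.
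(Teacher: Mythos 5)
Your outline of parts (\ref{ogijoetgweferw11}) and (\ref{hfiuhuidfaffasffd}), and the verification that $\K_\sepa$ is well defined, match the paper. But the way you set up part (\ref{gwkjorfgwerfwerf}) has a genuine gap, and it sits exactly at the point you flag as ``the main technical obstacle'': you propose to obtain the adjunction by restricting \eqref{eq:adjunction} along the inclusions $\Mod(\KU)_\sepa \hookrightarrow \Mod(\KU)$ and $y\colon \KK_\sepa \hookrightarrow \KK$, which requires the essential image of $y$ to be closed under (and $y$ to preserve) countable colimits. This is not something you can grant: the paper explicitly points out, in the discussion preceding the proposition, that $y\colon \KK_\sepa \to \KK$ is \emph{not} compatible with infinite colimits. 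Compact objects of an $\Ind$-completion are closed under finite colimits and retracts but not under countable coproducts by any general principle; making your step work would require showing that the canonical map $\bigoplus_n y(A_n) \to y\bigl(\bigoplus_n^{c_0} A_n\bigr)$ is an equivalence in $\KK$, i.e.\ that $\mathrm{KK}(B,-)$ is countably additive in the second variable for \emph{every} separable $B$ --- a substantially stronger input than anything established in the paper, and one the paper deliberately avoids.

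The paper's proof circumvents this entirely by constructing the left adjoint intrinsically in $\KK_\sepa$, pointwise: since $\KU_*$ has homological dimension $1$, every $M$ in $\Mod(\KU)_\sepa$ is a cofibre of a map between countable sums of shifts of $\KU$; one defines $M \otimes_\KU \kk_\sepa(\C)$ by the corresponding colimit formed \emph{inside} $\KK_\sepa$ (which admits countable coproducts, e.g.\ via $c_0$-sums of separable $C^*$-algebras), sets $\KU \otimes_\KU \kk_\sepa(\C) := \kk_\sepa(\C)$ with identity unit, and checks the adjunction identity
$\map_{\KK_\sepa}(M\otimes_\KU \kk_\sepa(\C),A) \simeq \map_\KU(M,\K_\sepa(A))$
by reducing along this presentation. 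Note that this only uses the universal property of coproducts in $\KK_\sepa$ (mapping \emph{out} of them) together with the additivity of $\K = \mathrm{KK}(\kk(\C),-)$, not additivity of $\mathrm{KK}(B,-)$ for general $B$. In particular the resulting adjunction is \emph{not} simply the restriction of \eqref{eq:adjunction} along $y$, so your proposed computation of mapping spectra ``along the fully faithful embeddings and invoking \eqref{eq:adjunction}'' does not apply. You should replace your second step by this direct construction; your deduction of the Bousfield localization property, of part (\ref{ogijoetgweferw11}) via the counit, and of part (\ref{hfiuhuidfaffasffd}) then goes through as in \cref{wklgpwrfgerwfw9}.
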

\begin{proof} We start with assertion \eqref{gwkjorfgwerfwerf}.
It is a classical fact  that the $K$-groups of a separable $C^{*}$-algebra are countable, so the functor $\K_\sepa$ as displayed is well-defined. 
 {We now show that $\K_\sepa$ admits a fully faithful left adjoint. To do so, it suffices to construct for any $M \in \Mod(\KU)_\sepa$ an object $M\otimes_\KU \kk_\sepa(\C)$ together with an equivalence $M \to \K_\sepa(M \otimes_\KU \kk_\sepa(\C))$ such that induced map 
\[ \map_{\KK_\sepa}(M\otimes_\KU \kk(\C),A) \to \map_{\KU}(M,\K_\sepa(A)) \]
is an equivalence for all objects $A$ of $\KK_\sepa$. Since every object $M$ of $\Mod(\KU)_\sepa$ is a cofibre of a map between sums of shifts of $\KU$ (using that $\KU_*$ has homological dimension 1), and $- \otimes_\KU \kk_\sepa(\C)$ preserves colimits, it suffices to construct these data for $\KU$, in which case we set $\KU \otimes_\KU \kk_\sepa(\C) = \kk_\sepa(\C)$ and define the wanted map to be the identity.}

The arguments for the remaining two assertions \eqref{ogijoetgweferw11} and  \eqref{hfiuhuidfaffasffd} are analoguos to the arguments for    \cref{wklgpwrfgerwfw9} (\ref{wergwoeigjowefewrf}) and 
\cref{wklgpwrfgerwfw9} (\ref{ogfrerferijoetgweferw}).
\end{proof}

\cref{oktrhoeprgrtgertge} (\ref{hfiuhuidfaffasffd})
says that $$\{A\in \nCalg_{\sepa}\mid \kk_{\sepa}(A)\in  \UCT_{\sepa}\}$$ is the bootstrap class in the sense of Rosenberg--Schochet \cite{Rosenberg_1987}.
 {A big open problem about the bootstrap class is whether it contains   all separable nuclear $C^*$-algebras. Our perspective has nothing to contribute to this question.}
 
\begin{rem}
 {A further open problem, which we address only briefly, is to describe 
 the Verdier quotient $\KK_\sepa/\UCT_\sepa$. 
 First, we note that this is a non-trivial stable $\infty$-category: Indeed, $\KK_\sepa/\UCT_\sepa$ being trivial is equivalent to the statement that a map $B \to C$ between separable $C^*$-algebras which induces an equivalence on $K$-theory in fact induces an equivalence in $\KK_\sepa$. This is not the case: Pick any short exact sequence \eqref{ergwergerferffw} of $C^*$-algebras. By \cref{wekojgwerferfwe}, $K$-theory sends all short exact sequences to fibre sequences, so the 
map $\iota\colon \ker(f) \to C(f)$ as in \cref{adscadscasdffccd} induces an equivalence on $K$-theory. However, this map is an equivalence in $\KK_\sepa$ if and only if the original short exact sequence induces a fibre sequence in $\KK_\sepa$, but there are short exact sequences whose image in $\KK_\sepa$ are not fibre sequences, see \cref{egojwpegerferfw}.} 
 
Basic invariants of small stable $\infty$-categories are so-called localizing (or additive) invariants. By construction, these depend only on the noncommutative motive associated to a stable $\infty$-category \cite{BGT}. Now, the noncommutative motives of $\KK_\sepa$ and $\UCT_\sepa$ both vanish since the existence of countable colimits allows for an Eilenberg--Swindle showing that the identity functor induces the zero map
 on the respective noncommutative motives. Moreover, the noncommutative motive of the Verdier quotient $\KK_\sepa/\UCT_\sepa$ is the cofibre of the induced map of noncommutative motives between $\UCT_\sepa$ and $\KK_\sepa$ and hence vanishes as well. 
Therefore typical invariants of small stable $\infty$-categories like algebraic $K$-theory or any flavour of topological Hochschild homology vanish on $\KK_\sepa/\UCT_\sepa$.
 \end{rem}

\subsection{Swan's theorem in $\KK$}  \label{adsfqdewdfew}
The following is a continuation of   \cref{wegojwoergferfweferfwr}.
We take advantage of the fact that we can do homotopy theory in $\KK$.
We use the notation from   \cref{eiorghbergerg}.

Let $A$ be in $\nCalg$  and  $X$ be a compact Hausdorff  space. 
\begin{prop}\label{wergijowegwergwrf}
If  $X$ is homotopy  finitely dominated, then we have an natural equivalence $\kk(C(X,A))\simeq \kk(A)^{X}$ in $\CAlg(\KK)$.
\end{prop}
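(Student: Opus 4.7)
The plan is to mirror the proof of \cref{weokjgopwegfrefwerf} at the $\kk$-level, applying \cref{wrgwtrgwrgfwerfwerf} to the pair of functors
$$\tilde F, \tilde F' \colon \CH^{\mathrm{fd}}_{\mathrm{hCW}} \to \KK^{\op}, \qquad \tilde F(X) := \kk(C(X,A)), \qquad \tilde F'(X) := \kk(A)^X.$$
The target $\KK^{\op}$ is cocomplete and hence idempotent complete, so the lemma applies. When $A$ is unital commutative, the same recipe applied to functors landing in $\CAlg(\KK)^{\op}$ will give the refined statement in $\CAlg(\KK)$, since both constructions preserve commutative algebra structures (the one on $\kk(A)^X$ coming from the diagonal of $X$).

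First I would verify homotopy invariance and the empty-space condition. The assignment $X \mapsto C(X,A)$ carries continuous homotopies to point-norm continuous homotopies of $*$-homomorphisms, so $\tilde F$ is homotopy invariant because $\kk$ is; $\tilde F'$ is homotopy invariant through the localization $\ell$. The empty space is sent to $0$ by both: $C(\emptyset, A) = 0$, while $\ell(\emptyset)$ is initial in $\Spc$, so $\kk(A)^{\emptyset}$ is terminal in $\KK$ and hence $0$ by stability.

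The central step is to show that both functors convert the squares \eqref{qewfhq8wefhqwefqwedqewdqewd22} attached to cofibrant decompositions $(Y,Z)$ of $X$ into pushout squares in $\KK^{\op}$. For $\tilde F'$ this is formal: $\ell$ sends these squares to pushouts in $\Spc$ as already recorded in the excerpt, and the power construction $\Spc^{\op} \to \KK$, $B \mapsto \kk(A)^B$, sends colimits to limits by definition. For $\tilde F$ I would imitate the corresponding argument of \cref{weokjgopwegfrefwerf}: the restriction short exact sequences
$$0 \to \ker(C(X,A) \to C(Y,A)) \to C(X,A) \to C(Y,A) \to 0$$
and its analog with $(X,Y)$ replaced by $(Z, Y\cap Z)$ produce fibre sequences in $\KK$, and the canonical isomorphism
$$\ker(C(X,A)\to C(Y,A)) \cong C_0(X\setminus Y, A) \cong C_0(Z\setminus(Y\cap Z),A) \cong \ker(C(Z,A)\to C(Y\cap Z,A))$$
identifies their fibres, which is precisely the cartesianness of the square. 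Applying \cref{wrgwtrgwrgfwerfwerf} with the tautological equality $\tilde F(*) = \kk(A) = \tilde F'(*)$ will then produce the desired natural equivalence.

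The main obstacle, and the chief departure from \cref{weokjgopwegfrefwerf}, is that $\kk$ is only semi-exact, whereas the $K$-theory functor used there is fully exact. I must therefore verify that each restriction surjection $C(X,A) \to C(Y,A)$ is semi-split, i.e.\ admits a completely positive contractive section, rather than merely being exact. This is classical: the scalar restriction $C(X) \to C(Y)$ admits a c.p.c.\ section by a Tietze-type extension argument, and tensoring with $\id_A$ yields the required section for $A$-valued functions. With this splitting in hand, the semi-exactness of $\kk$ supplies the fibre sequences needed above, and the $s$-finitariness of $\kk$ combined with the fact that $C(X,A)$ is separable whenever $A$ is (so no set-theoretic size issues arise in $\KK = \Ind(\KK_\sepa)$) completes the argument.
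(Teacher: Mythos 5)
Your proposal is correct and follows essentially the same route as the paper: the paper's proof is a two-line reduction to \cref{weokjgopwegfrefwerf} via \cref{wrgwtrgwrgfwerfwerf}, whose only new ingredient is precisely the point you single out, namely that the restriction sequences \eqref{fdvdvfsvdfv} and \eqref{fdvdvfsvdfv1} are semi-split so that the merely semi-exact functor $\kk$ still produces the required fibre sequences. Your sketch of the c.p.c.\ section (positive extension operator for the closed cofibration, tensored with $\id_A$) correctly fills in the detail the paper leaves implicit.
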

\begin{proof}
The proof  is analogous to the proof of  \cref{weokjgopwegfrefwerf}.
Both  functors $X\mapsto \kk(C(X,A))$ and $X\mapsto \kk(A)^{X}$ are  homotopy invariant, 
excisive for cofibrant closed decompositions    and coincide for $X=*$. In the case of ${\kk(C(-,A))}$ we use 
that the exact sequences \eqref{fdvdvfsvdfv} and \eqref{fdvdvfsvdfv1} are semi-split. 
\end{proof}

Let $X$ be a compact Hausdorff space and $A,B$ in $\nCalg$.
\begin{cor}\label{erkogfwrewf}
If  $X$ is homotopy  finitely dominated, then
we  have natural equivalences
$$KK(C(X,A),B)\simeq KK(A,B)\otimes X\ , \quad KK(A,C(X,B))\simeq KK(A,B)^{X}\ $$
 {of $\KU$-modules. }
\end{cor}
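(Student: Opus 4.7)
The plan is to derive both equivalences from \cref{wergijowegwergwrf}, which identifies $\kk(C(X,-))$ with the cotensor $\kk(-)^{X}$ in $\KK$, and then to invoke the standard tensor/cotensor adjunctions available in $\KK$ since it is a presentable stable $\KU$-linear $\infty$-category (hence in particular tensored and cotensored over $\Sp$).

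For the second equivalence, I would first apply \cref{wergijowegwergwrf} to obtain $\kk(C(X,B))\simeq \kk(B)^{X}$ in $\KK$, and then use the defining universal property of the cotensor. This yields a natural $\KU$-module equivalence
\[
\map_{\KK}(\kk(A),\kk(B)^{X})\simeq \map_{\KK}(\kk(A),\kk(B))^{X}\ ,
\]
which upon identifying the left-hand side with $KK(A,C(X,B))$ and the right-hand side with $KK(A,B)^{X}$ gives the claim.

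For the first equivalence the situation is slightly more subtle, since the cotensor by $X$ now appears on the contravariant side of the mapping spectrum, so the cotensor adjunction does not apply directly. Here the plan is to exploit that $X$ is homotopy finitely dominated: as in the proof of \cref{wrgwtrgwrgfwerfwerf}, $\ell(X)$ is a compact object of $\Spc$, so $\Sigma^{\infty}_{+}X$ is a compact, and therefore dualizable, object of $\Sp$. Letting $DX$ denote its dual, we obtain the standard identification $C^{X}\simeq C\otimes DX$ for every $C$ in $\KK$. Combining this with \cref{wergijowegwergwrf} gives $\kk(C(X,A))\simeq \kk(A)\otimes DX$, and then the hom-tensor adjunction together with the double-duality equivalence $DDX\simeq X$ yields
\[
KK(C(X,A),B)\simeq \map_{\KK}(\kk(A)\otimes DX,\kk(B))\simeq \map_{\KK}(\kk(A),\kk(B))^{DX}\simeq KK(A,B)\otimes X
\]
as $\KU$-modules, where the final equivalence uses that cotensor with a dualizable spectrum $DX$ is the same as tensor with $D(DX)\simeq X$.

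The only non-routine input is the dualizability of $\Sigma^{\infty}_{+}X$ for $X$ only homotopy finitely dominated (rather than a genuine finite CW complex); I expect this to be the main conceptual point, though it is standard since \emph{compact $=$ dualizable} in $\Sp$ and retracts of compact objects are compact. All remaining steps are formal manipulations with (co)tensor adjunctions and are independent of the specific category $\KK$ beyond its structure as a presentable stable $\KU$-linear $\infty$-category.
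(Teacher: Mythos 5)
Your proof is correct and takes essentially the same approach as the paper: the paper's own proof of \cref{erkogfwrewf} consists of the single remark that the statement is ``a formal consequence of \cref{wergijowegwergwrf} using that $\Sigma^{\infty}_{+}X$ is a dualizable object of $\Sp$'', which is exactly the combination of the cotensor adjunction and the compact-implies-dualizable argument you spell out in detail.
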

\begin{proof} This is a formal consequence of \cref{wergijowegwergwrf} using that $\Sigma^{\infty}_{+}X$ is a dualizable object of $\Sp$. Alternatively one can 
argue as in the proof of said proposition.
 \end{proof}

\cref{erkogfwrewf} allows us to calculate the groups
$KK_{*}(C(X,A),B)$ and $KK_{*}(A,C(X,B))$ in terms of the Atiyah-Hirzebruch spectral sequences \eqref{casdcsdaccsca1} and \eqref{casdcsdaccsca} which only depend on {the spectrum}  $\KK(A,B)$   and the  {space} underlying $X$ -- in fact only the $\KU$-module $\KU\otimes X$.

The restriction to spaces which are homotopy {finitely dominated}   is again necessary.  The counterexample  from   \cref{wegojwoergferfweferfwr} {also applies in the present situation}.

 \subsection{Twisted $K$-theory}\label{wkogpwgerfwerfwf}
 
 Let $A$ be in $\nCalg$ and $X$ be a compact 
 Hausdorff space.
 Assume that $\cA\to X$ is a locally trivial fibre bundle of $C^{*}$-algebras with fibre $A$. The  transition functions between local trivializations are required to be continuous 
 in the  point-norm topology on the automorphisms of $A$.

We can form the $C^{*}$-algebra 
 $\Gamma(X,\cA)$ of continuous sections of $\cA$.  {In case of a trivial bundle, $\cA = X \times A$, we obtain $\Gamma(X,\cA) = C(X,A)$. We aim to generalize \cref{wergijowegwergwrf} to the case of non-trivial bundles}.
 \begin{definition}The $\cA$-twisted $K$-theory of $X$ is defined as the object
$K(\Gamma(X,\cA))$ in $\Mod(\KU)$.  \end{definition}

In this section we show how  $K(\Gamma(X,\cA))$ can be described as a twisted $K$-theory in the homotopy theoretic sense. The rough idea is as follows. We apply $K$ fibrewise to the bundle $\cA$ and get a  {``bundle''} $K(\cA)\to X$ of objects in $\Mod(\KU)$ with fibre $K(A)$.   This bundle is a twist in the sense of homotopy theory.  
The associated twisted  cohomology is then defined as the  $\KU$-module  $\Gamma(X,K(\cA))$ of sections of this bundle. Constructions of this type  have first been made precise in \cite{MR3252967} using the language of $\infty$-categories.
We then claim that \begin{equation}\label{svsdcacascca}
K(\Gamma(X,\cA))\simeq \Gamma(X,K(\cA))
\end{equation}
in $\Mod(\KU)$  if $X$ is  homotopy finitely dominated. 
The  precise definition of the right-hand side of this equivalence requires explanations  {which we provide now}.

Let $C$ be an object in an $\infty$-category $\cC$.
In a first step we make precise what we mean by a
``bundle'' ({henceforth called local system}) with fibre $C$  over a space $W$ in $\Spc$. To this end, we let $ {\bB\Aut_{\cC}(C)}$ denote the  full subcategory  {of $\cC^\simeq$ (the groupoid core of $\cC$) }on objects equivalent to $C$.
\begin{definition}
The category of  {local systems}
on $W$ with fibre  $C$ is the functor category $\Fun(W, {\bB\Aut_{\cC}(C)})$.  
\end{definition}

\begin{rem} 
We have a  {fully faithful} functor $\Spc\to \Cat_{\infty}$ identifying $\infty$-groupoids with spaces.  {This functor has been used in the above definition, and there is an equivalence of $\infty$-groupoids }
\[\Map_\Spc(W,\bB\Aut_\cC(C)) \simeq \Fun(W,\bB\Aut_\cC(C)).\] 
Moreover, a group $G$ in $\Group(\Spc)$  gives rise to a classifying space $BG$ whose associated $\infty$-category  has a single equivalence class of objects with automorphism group $G$.

The invertible components in the monoid $\End_{\cC}(C)$ form a group $\Aut_{\cC}(C)$ and we can consider its classifying space $B\Aut_{\cC}(C)$ in $\Spc$. The canonical inclusion $B\Aut_{\cC}(C)\to \cC$ 
 {induces an equivalence}
$B\Aut_{\cC}(C)\stackrel{\simeq}{\to}{\bB\Aut_{\cC}(C)}$.
Thus we have an equivalence of $\infty$-groupoids  
$$\Map_\Spc(W, B\Aut_{\cC}(C))\simeq \Map_{\Spc}(W, {\bB\Aut_{\cC}(C)})\ .$$
 {In case $\cC=\Spc$, classical fibration theory identifies this $\infty$-groupoid with the $\infty$-groupoid of fibrations over (a topological space representing the space) $W$ with typical fibre $C$, a weak form of a bundle on $W$. }
\hB
 \end{rem}

 Let $\phi$ in $ \Fun(W,{\bB\Aut_{\cC}(C)})$ {be a local system on $W$ with fibre $C$ and assume that $\cC$ admits $W$-indexed limits.}
\begin{definition}\label{wetjigowergferwfwefre}
We define the object in $\cC$ of global sections of the local system by $\Gamma(W,\phi):=\lim_{W} \phi$.
\end{definition}

Coming back  to the bundle of $C^{*}$-algebras $\cA\to X$ {over a compact Hausdorff 
space}
we now explain the construction of  {an associated} 
 {local system $\ell(X) \to \bB\Aut_{\CAlg_h}(A)$.
Recall from \cref{werijgwergwerfwef}} that $\Simp(X)$ denotes the category of singular simplices of $X$. 
The bundle $\cA \to X$ induces a functor $\phi_{\cA}\colon \Simp(X)^{\op}\to \nCalg$ which sends the simplex
$\sigma\colon \Delta^{n}\to X$ to the $C^{*}$- algebra of sections
$\Gamma(\Delta^{n},\sigma^{*}\cA)$. The morphisms between simplices are sent to the corresponding 
restriction maps. Since $\cA$ is locally trivial and the simplices are contractible, classical bundle theory implies that the $C^{*}$-algebras $\Gamma(\Delta^{n},\sigma^{*}\cA)$ are isomorphic to $C(\Delta^n,A)$, and therefore all homotopy equivalent to $A$. Moreover, all the restriction maps along morphisms {of $\Simp(X)$} are homotopy equivalences. The composite
\[ \Simp(X)^\op \stackrel{\phi_\cA}{\lto} \nCalg \stackrel{L_h}{\lto} \nCalg_h \]
therefore sends all morphisms in $\Simp(X)^\op$ to equivalences.
Using the interpretation of $\Simp(X)\to \ell(X)$ as the Dwyer-Kan localization of $\Simp(X)$ at all morphisms
we obtain an   essentially unique functor $\ell(X) \simeq \ell(X)^{\op} \to \nCalg_h$ which by  construction lands in $\bB\Aut_{\nCalg_h}(A)$. This is the local system we intended to construct:

\begin{definition}
We call the resulting map
$$\tilde \phi_{\cA}\colon \ell(X)\to  {\bB\Aut_{\nCalg_{h}}(A)}$$
the local system associated to the bundle $\cA$.
\end{definition}

 {From this associated local system, we can form various other local systems we shall be interested in.}
 {Recall that $\kk \colon \nCalg \to \KK$ factors as $\nCalg \to \nCalg_h \stackrel{\kk_h}{\to} \KK$ and that we have the functor $\K \colon \KK \to \Mod(\KU)$.}
\begin{definition}
We call the composite
\[ \kk(\cA)\colon \ell(X) \stackrel{\tilde{\phi}_\cA}{\lto} \bB\Aut_{\nCalg_h}(A) \stackrel{\kk}{\lto} \bB\Aut_{\KK}(\kk(A)) \]
the $\KK$-local system associated to the bundle $\cA$ and the composite
\[ \K(\cA) \colon \ell(X) \stackrel{\kk(\cA)}{\lto} \bB\Aut_{\KK}(\kk(A)) \stackrel{\K}{\lto} \bB\Aut_{\Mod(\KU)}(K(A)) \]
the $K$-theory local system associated to $\cA$.
\end{definition}
{
We then have the respective global sections
\[\Gamma(X,\kk(\cA)) := \lim_{\ell(X)} \kk(\cA) \:\:\:\mbox{in} \:\:\:\KK \quad \text{ and } \quad \Gamma(X,\K(\cA)) := \lim_{\ell(X)} \K(\cA) \:\:\:\mbox{in} \:\:\: \Mod(\KU).\]
Since $\K \colon \KK \to \Mod(\KU)$ preserves limits (it is corepresented), there is a canonical equivalence
\[ \Gamma(X,\K(\cA)) \simeq \K(\Gamma(X,\kk(\cA)))\]
relating the global sections of the $K$-theory local system of $\cA$ as the K-theory of the global sections of the $\KK$-local system associated to $\cA$.
Furthermore, using that the localization functor $\Simp(X) \to \ell(X)$ is cofinal,
the global sections of the $\KK$-local system can also be expressend as 
\begin{equation}\label{eq:formula-for-limit}
\Gamma(X,\kk(\cA))\simeq \lim_{\Simp(X)^{\op}} \kk_{h}\circ L_h \circ \phi_{\cA}\ .
\end{equation}
}

 \begin{prop} \label{weorgpwegwrewgerg9} 
 If $X$ is homotopy  {finitely dominated}, then we 
 have a canonical equivalence  \begin{equation}\label{svsdcacaescca}
\kk(\Gamma(X,\cA))\simeq \Gamma(X,\kk(A))
\end{equation}
in $\KK$, and consequently an equivalence $K(\Gamma(X,\cA)) \simeq \Gamma(X,\K(\cA))$ in $\Mod(\KU)$.
 \end{prop}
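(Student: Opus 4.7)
The plan is to generalize the proof of \cref{wergijowegwergwrf} from the trivial bundle case $\cA = X \times A$ (corresponding to $C(X,A)$) to arbitrary locally trivial bundles, using a descent argument along a finite trivializing cover together with Mayer--Vietoris for cofibrant decompositions.

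First, I would construct the canonical comparison map. The restriction maps $r_\sigma\colon \Gamma(X,\cA) \to \Gamma(\Delta^n, \sigma^*\cA)$ associated to singular simplices $\sigma$ of $X$ assemble into a cone over $\phi_{\cA}\colon \Simp(X)^{\op} \to \nCalg$. Applying $\kk_h \circ L_h$ and using the identification \eqref{eq:formula-for-limit} produces the natural map $\kk(\Gamma(X,\cA)) \to \Gamma(X, \kk(\cA))$ whose equivalence we want to establish.

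Second, I would verify Mayer--Vietoris descent on both sides for cofibrant decompositions $(Y,Z)$ of $X$. For the right-hand side, $\ell$ sends the square \eqref{qewfhq8wefhqwefqwedqewdqewd22} to a pushout in $\Spc$, and since $\Gamma(-,\kk(\cA)) = \lim_{\ell(-)} \kk(\cA)$ converts pushouts of index spaces into pullbacks in $\KK$, the resulting square is cartesian. For the left-hand side, the restriction map $\Gamma(X,\cA) \to \Gamma(Z,\cA|_Z)$ fits into a short exact sequence of $C^*$-algebras whose kernel is canonically isomorphic to the kernel of $\Gamma(Y,\cA|_Y) \to \Gamma(Y\cap Z, \cA|_{Y\cap Z})$. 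Since $\kk$ sends semi-split exact sequences to fibre sequences, this yields the desired pullback square in $\KK$, provided one can exhibit cpc splittings of these restrictions.

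Third, since $X$ is compact and $\cA$ is locally trivial, there is a finite closed cover of $X$ by cofibrantly decomposable pieces on which $\cA$ is trivial. By induction on the number of trivializing pieces, together with the two descent properties and the naturality of the comparison map, the statement reduces to the case of a trivial bundle, which is exactly \cref{wergijowegwergwrf}; the base case of a single point is immediate since $\Gamma(*,\cA) = A$ and $\lim_{\Simp(*)^{\op}} \const_{\kk(A)} \simeq \kk(A)$. The second assertion, $K(\Gamma(X,\cA)) \simeq \Gamma(X,\K(\cA))$, then follows by applying the limit-preserving functor $\K$ together with the canonical equivalence $\Gamma(X,\K(\cA)) \simeq \K(\Gamma(X,\kk(\cA)))$ noted before the proposition.

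The main obstacle is producing the completely positive contractive splittings of the restriction maps $\Gamma(X,\cA) \to \Gamma(Z,\cA|_Z)$ for closed $Z \subseteq X$ in the case of a non-trivial bundle. For a trivial bundle one extends sections by a Tietze/partition-of-unity argument, but in general one must patch local cpc splittings over a trivializing open cover of $X$ via a continuous partition of unity subordinate to the cover; this works because cpc maps form a convex set stable under such convex combinations, and the local trivializations identify the relevant fibrewise extensions with those of $C(\Delta^n, A)$-valued sections.
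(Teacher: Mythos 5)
Your overall strategy -- construct the canonical comparison map from the cone of restriction maps, establish a descent property on both sides, and induct down to the trivial-bundle case settled by \cref{wergijowegwergwrf} -- is the same in spirit as the paper's argument, and your treatment of the final assertion via the limit-preserving functor $\K$ matches the paper. However, there is a genuine gap in your step three. You assert that, since $X$ is compact and $\cA$ is locally trivial, there is a finite \emph{closed} trivializing cover of $X$ by ``cofibrantly decomposable pieces''. For a general compact Hausdorff space that is merely homotopy finitely dominated, this need not exist: the hypotheses give no control over the local point-set structure of $X$, so one cannot guarantee that the pieces of a trivializing cover, their successive unions, and their intersections form cofibrant decompositions, nor that the individual pieces and intersections are themselves homotopy finitely dominated (which you need in order to invoke \cref{wergijowegwergwrf} on them). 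The paper circumvents exactly this problem by first proving that \emph{both} sides, viewed as functors on the category $\CH_{/X}$ of compact Hausdorff spaces over $X$, are homotopy invariant; since $X$ is a homotopy retract of a finite CW-complex and retracts of equivalences are equivalences, this reduces the claim to the case where $X$ is an honest finite CW-complex. Only then does one induct -- over the cells rather than over a trivializing cover -- and the relevant squares involve $D^{n}$ and $\partial D^{n}$, over which the pulled-back bundle is automatically trivial because $D^{n}$ is contractible. Your proposal skips this reduction entirely, and without it the induction cannot start.

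A secondary consequence of the missing reduction is that your ``main obstacle'' (producing cpc splittings of $\Gamma(X,\cA)\to\Gamma(Z,\cA|_{Z})$ for a non-trivial bundle by patching local splittings with a partition of unity) is harder than it needs to be and is only sketched. After the reduction to a finite CW-complex and the cell-by-cell induction, one only ever needs semi-splitness for the restriction $\Gamma(D^{n},\cA|_{D^{n}})\to\Gamma(\partial D^{n},\cA|_{\partial D^{n}})$ of a \emph{trivial} bundle over a disc, which is the case already used in \cref{wergijowegwergwrf}; the splitting for $\Gamma(Y,\cA|_Y)\to\Gamma(Z,\cA|_Z)$ is then obtained for free from the pullback description of $\Gamma(Y,\cA|_Y)$. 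Finally, note that to run either the retract argument or an induction you need the comparison map as a \emph{natural} transformation of functors on $\CH_{/X}$ (the paper builds this via a Grothendieck construction and a right Kan extension), not merely as a single map for $X$ itself; you allude to naturality but do not construct it at this level of generality.
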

 {The latter describes the $\cA$-twisted $K$-theory of $X$ by homotopy theoretic means, as promised, and the former also gives variants of this result for $\cA$-twisted $KK$-theory.}
\begin{proof}
We consider  the $\KK$-valued functors 
$$(f\colon Y\to X)\mapsto \kk(\Gamma(Y,f^{*}\cA)) \ , \quad (f\colon Y\to X)\mapsto  \Gamma({\ell}(Y),\kk(f^{*}\cA))$$
on compact Hausdorff spaces over $X$. {We aim to show that they are equivalent provided $X$ is homotopy finitely dominated. To this end, we first construct a
natural transformation $\rho$ between these functors and then investigate its behaviour on homotopy finitely dominated spaces}. In order to   describe  the value of $\rho$ on $f\colon Y\to X$ we  consider   the following natural transformation $\rho_f$ of functors
\begin{equation}\label{verwvijerovewvsdfvsfv}\underline{\Gamma(Y,f^{*}\cA)}\to \phi_{f^{*}\cA}\colon \Simp(Y)^{\op}\to \nCalg
\end{equation}  where the underline denotes the constant functor. The transformation $\rho_f$ on the simplex $\sigma\colon \Delta^{n}\to Y$ is given by the restriction
$$\sigma^{*}\colon \Gamma(Y,f^{*}\cA)\to \Gamma(\Delta^{n},\sigma^{*}f^{*}\cA)\ .$$
Composing with the canonical functor $\kk \colon \nCalg \to \KK$, we get a transformation
$$\underline{\kk(\Gamma(Y,f^{*}\cA))}\to \kk_{h}\circ L_{h}\circ \phi_{f^{*}\cA}\colon \Simp(Y)^{\op}\to \KK\ ,$$
or equivalently, by
 adjunction and the equivalence \eqref{eq:formula-for-limit},  a map $\rho(f)$
 \begin{equation}\label{qewfeqwdqed}
\kk(\Gamma(Y,f^{*}\cA))\to \lim_{\Simp(Y)^{\op}} \kk_{h}\circ L_{h}\circ \phi_{f^{*}\cA} \simeq \Gamma(\ell(Y),\kk(f^{*}\cA))\ .
\end{equation}
In order  construct $\rho$ as a natural transformation,
we consider the category $\CH_{/X}$ of compact Hausdorff spaces $Y\to X$ over $X$  {and let $\widetilde \CH_{/X}$ be the Grothendieck construction of the functor
$\CH_{/X}\to \CH \to \Cat$ which sends $Y\to X$ to $\Simp(Y)^{\op}$. That is, $\widetilde \CH_{/X}$ is the category of pairs $(f ,\sigma )$ of an object $f\colon Y\to X$ in $\CH_{/X}$ and
an object $\sigma\colon \Delta^{n}\to Y$ in $\Simp(Y)^{\op}$. A morphism $(f,\sigma)\to (f',\sigma')$ consists of a morphism
$\psi\colon f\to f'$ in $\CH_{/X}$ and a morphism $\psi\circ \sigma\to \sigma'$ in $\Simp(Y')^{\op}$.
We let $q\colon \widetilde \CH_{/X}\to \CH_{/X}$ denote the forgetful functor.}

We have a functor
$\Gamma_{\cA}\colon \CH_{/X}\to \nCalg$ sending $f:Y\to X$ to $\Gamma(Y,f^{*}\cA)$.
We further consider the functor $\phi_{\cA}\colon \widetilde \CH_{/X}\to \nCalg$ which sends
$(f ,\sigma )$ to $\Gamma(\Delta^{n},\sigma^{*}f^{*}\cA)$.
The analogue of construction \eqref{verwvijerovewvsdfvsfv} in this context gives a natural transformation
$$q^{*}\Gamma_{\cA}\to \phi_{\cA}\colon \widetilde \CH_{/X}\to \nCalg \ $$
which in turn induces a transformation 
$$q^{*} (\kk\circ \Gamma_{\cA})\to \kk_{h}\circ L_{h} \circ \phi_{\cA}\colon \widetilde \CH_{/X}\to \KK\ ,$$
and therefore by adjunction
$$\rho: \kk\circ \Gamma_{\cA}\to q_{*}( \kk_{h}\circ L_{h} \circ \phi_{\cA})\colon \CH_{/X}\to \KK\ ,$$
where $q_{*}$ is the right Kan extension functor. Using the point-wise formula for the right Kan extension one sees that the target of this transformation is the correct functorial enhancement of the construction $(f\colon Y\to X)\mapsto \Gamma(\ell(Y),\kk(f^{*}\cA))$. \hB

We now observe that both sides of the natural transformation  $\rho$
are homotopy invariant on the category of compact Hausdorff spaces over $X$.  {Moreover, since retracts of equivalences are equivalences, it suffices to treat the case where}
$X$ is a finite CW-complex.  {In this case, we want to show that
the value of the natural transformation constructed above on $X$ (or rather on the identity of $X$) is an equivalence.}
We will  argue  by induction over the cells of $X$.
Assume that 
 $Z,Y$ are subcomplexes of $X$ and $Y$ is obtained from $Z$ by attaching
an $n$-cell $D^{n}$. {The transformation just constructed induces a map from the left to the right between the following squares.}
\[ \begin{tikzcd}
	\kk(\Gamma(Y,\cA_{|Y}) \ar[r] \ar[d] & \kk(\Gamma(Z,\cA_{|Z})) \ar[d] & \Gamma(\ell(Y),\kk(\cA_{|Y})) \ar[r] \ar[d] & \Gamma(\ell(Z),\kk(\cA_{|Z})) \ar[d] \\\hspace{-1cm}
	\kk(\Gamma(D^n,\cA_{|D^n})) \ar[r] & \kk(\Gamma(\partial D^n,\cA_{|\partial D^n})) & \Gamma(\ell(D^n),\kk(\cA_{|D^n})) \ar[r] & \Gamma(\ell(\partial D^n),\kk(\cA_{|\partial D^n}))
\end{tikzcd}\]
 {Now, both of these squares are pullback squares. By the inductive assumption, the map on the top right corner is an equivalence. Moreover, the maps on the bottom two terms identify with the equivalences of \cref{wergijowegwergwrf} since the bundle $\cA$ restricted to $D^n$ and $\partial D^n$ is trivial:
 Indeed, in the case of a trivial bundle $\cA$, the transformation we have constructed induces a natural transformation between functors that depend only on the underlying space, and hence this transformation is determined by its value on the terminal space, see \cref{wrgwtrgwrgfwerfwerf}, in which case it is the identity by construction. We deduce that the map on the top left corners is also an equivalence, providing the inductive step.}
\end{proof}

\bibliographystyle{plain}
\bibliography{survey}

\end{document}